\newcommand{\lev}[1]{\ensuremath{\mathrm{lev}_{\leq #1}\:}}
\newcommand{\menge}[2]{\big\{{#1}~\big |~{#2}\big\}} 
\newcommand{\emp}{\ensuremath{\varnothing}}
\newcommand{\Frac}[2]{\displaystyle{\frac{#1}{#2}}} 
\newcommand{\scal}[2]{\big\langle{{#1},{#2}}\big\rangle} 
\newcommand{\HH}{\ensuremath{{\mathcal H}}}
\newcommand{\GG}{\ensuremath{\mathcal G}}
\newcommand{\RR}{\ensuremath{\mathbb R}}
\newcommand{\RP}{\ensuremath{\left[0,+\infty\right[}}
\newcommand{\RPP}{\ensuremath{\,\left]0,+\infty\right[}}
\newcommand{\RX}{\ensuremath{\,\left]-\infty,+\infty\right]}}
\newcommand{\NN}{\ensuremath{\mathbb N}}
\newcommand{\LL}{\ensuremath{\mathbb L}}
\newcommand{\KK}{\ensuremath{\mathbb K}}
\newcommand{\dom}{\ensuremath{\mathrm{dom}\,}}
\newcommand{\rprox}{\ensuremath{\mathrm{rprox}}}
\newcommand{\prox}{\ensuremath{\mathrm{prox}}}
\newcommand{\inte}{\ensuremath{\mathrm{int}\,}}
\newcommand{\rint}{\ensuremath{\mathrm{rint}\,}}
\newcommand{\ran}{\ensuremath{\mathrm{ran}\,}}
\newcommand{\Id}{\ensuremath{\mathrm{Id}}}
\newcommand{\ic}{\ensuremath{\mathrm{\iota_C}}}
\newcommand{\minf}{\ensuremath{-\infty}}
\newcommand{\pinf}{\ensuremath{+\infty}}
\newcommand{\zz}{\ensuremath{\tilde{z}}}
\newcommand{\gammat}{\ensuremath{\kappa}}
\newcommand{\lambdat}{\ensuremath{\tau}}
\newcommand{\yy}{\ensuremath{y}}
\newcommand{\yyy}{\ensuremath{y'}}
\newcommand \suite[2][m]{\left(#2\right)_{#1\in \NN}}
\newcommand \norm[1]{\Vert#1\Vert}
\newcommand \normCar[1]{\ensuremath{\norm{#1}^{2}}}
\newcommand \prodScal[2]{\left\langle #1,#2 \right\rangle}
\newtheorem{theorem}{Theorem}[section]
\newtheorem{lemma}[theorem]{Lemma}
\newtheorem{example}[theorem]{Example}
\newtheorem{problem}{Problem}[section]
\newtheorem{proposition}[theorem]{Proposition}
\newtheorem{assumption}[theorem]{Assumption}
\newtheorem{algorithm}{Algorithm}[section]
\newtheorem{remark}[theorem]{Remark}
\title{Nested iterative algorithms for convex constrained image recovery problems\thanks{Part of this work appeared in the conference proceedings of EUSIPCO 2008 \cite{Pustelnik_N_2008_peusipco_constrained_fbairp}. This work was supported by the Agence Nationale de la Recherche under grant ANR-05-MMSA-0014-01.}}
\author{Caroline Chaux \and Jean-Christophe Pesquet  \and Nelly Pustelnik 
\thanks{C. Chaux, J.-C. Pesquet and N. Pustelnik are with the Universit{\'e} Paris-Est, Institut Gaspard Monge and CNRS-UMR 8049, 77454 Marne-la-Vall{\'e}e
Cedex 2, France. Phone: +33 1 60 95 77 39, E-mail: \texttt{\{caroline.chaux,jean-christophe.pesquet,nelly.pustelnik\}@univ-paris-est.fr}.}}
\begin{document}

\maketitle

\begin{abstract}
The objective of this paper is to develop methods for solving image recovery problems subject to constraints on the solution. More precisely, we will be interested in problems
which can be formulated as the minimization over a closed convex constraint set of the sum of two convex functions $f$ and $g$, where $f$ may be non-smooth and $g$ is differentiable 
with a Lipschitz-continuous gradient.
To reach this goal, we derive two types of algorithms that combine forward-backward and  Douglas-Rachford iterations. The weak convergence of the proposed algorithms is proved. In the case when the Lipschitz-continuity 
property of the gradient of $g$ is not satisfied, we also show that, under some assumptions,
it remains possible to apply these methods to the considered optimization problem by making use of a quadratic extension technique.
The effectiveness of the algorithms is demonstrated for two wavelet-based image restoration problems involving a signal-dependent Gaussian noise and a Poisson noise, respectively.

\end{abstract}

\markboth{Pustelnik \emph{et al.}: A constrained forward-backward algorithm\ldots}{SIAM Journal on Imaging Sciences}

\section{Introduction}\label{sec:intro}

Wavelet decompositions \cite{Mallat_S_1999_ap_wavelet_awtosp} proved their efficiency in solving many inverse problems. More recently, frame representations such as Bandlets \cite{LePennec_E_2005_tip_spa_girb}, Curvelets \cite{Candes_EJ_2002_as_Recovering_eiipipoocf}, Grouplets \cite{Mallat_S_2008_acha_geometric_g} or dual-trees \cite{Selesnick_I_2005_dsp_dual_tdtcwt,Chaux_C_2006_tip_ima_adtmbwt} have gained much popularity. These linear tools provide geometrical representations of images and they are able to easily incorporate a priori information (e.g. via some simple statistical models) on the data. Variational or Bayesian formulations of inverse problems using such representations
often lead to the minimization of convex objective functions including a non-differentiable term having a sparsity promoting role \cite{Chambolle_A_1998_tip_nonlin_wipvpcnrws,Nikolova_M_2000_siam_Local_shoare,Antoniadis_A_2002_sn_Wavelet_tfscongn,Candes_EJ_2006_ip_Sparsity_aiics,Tropp_JA_2006_tit_convex_jrcpmissn,Combettes_PL_2007_siamopt_proximal_tafmoob}.

In restoration problems, the observed data are corrupted by a linear operator and a noise which is not necessarily additive. To solve this problem, one can adopt a variational approach, aiming at minimizing the sum of two functions $f$ and $g$ over a convex set $C$ in the transform domain.
Throughout the paper, $f$ and $g$ are assumed to be in the class $\Gamma_0(\HH)$ of lower semicontinuous convex functions 
taking their values in $]-\infty,+\infty]$ which are proper (i.e. not identically equal to $+\infty$) and defined on a real separable Hilbert space $\HH$.
Then, our objective is to solve the following:
\begin{problem}\label{pb:minimisation}
Let $C$ be a nonempty closed convex subset of $\HH$. Let $f$ and $g$ be in $\Gamma_{0}(\HH)$, where $g$ is differentiable on $\HH$ with a $\beta$-Lipschitz continuous gradient for some $\beta\in\RPP$.
\begin{equation*}
\text{Find}\qquad \min_{x\in C} f(x)+g(x).
\end{equation*}
\end{problem}
Problem \ref{pb:minimisation} is equivalent to minimizing $f+g+\ic$, where $\ic$ denotes the indicator function of $C$, i.e. 
\begin{equation*}
\label{e:iota}
(\forall x\in\HH)\quad\ic(x)=
\begin{cases}
0,&\text{if}\;\;x\in C;\\
\pinf,&\text{otherwise.}
\end{cases}
\end{equation*}
Up to now, many authors devoted their works to the unconstrained case, i.e. $C= \HH$.
So-called  thresholded Landweber algorithms belonging to the more general class
of forward-backward optimization methods were proposed in \cite{Figueiredo_M_2003_tosp_EM_afwbir,Bect_J_2004_eccv_unified_vfir,Daubechies_I_2004_cpamath_iterative_talipsc,Bredies_K_2007_coa_gene_cgmcism} in order to solve the problem numerically.
Daubechies \textit{et al.} \cite{Daubechies_I_2004_cpamath_iterative_talipsc} investigated the convergence of these algorithms in the particular case when $g$ is a quadratic function and $f$ is a weighted $\ell_p$-norm with $p\in\left[1,2\right]$. These approaches were put into a more general convex analysis framework in \cite{Combettes_PL_2005_mms_Signal_rbpfbs} and extended to frame representations in \cite{Chaux_C_2007_ip_variational_ffbip}.
Attention was also paid to the improvement of the convergence speed of the forward-backward algorithm in \cite{Bioucas_J_2007_toip_New_ttsistafir},
for some specific choices of $f$ and $g$.
In \cite{Vonesch_C_2008_tip_Fast_lafwrmd}, an accelerated method was suggested in the specific case of a deconvolution in a Shannon wavelet basis.
Then, a Douglas-Rachford algorithm  relaxing the assumption of differentiability of $g$
was introduced in \cite{Combettes_PL_2007_istsp_Douglas_rsatncvsr}. In recent works \cite{Dupe_FX_2008_pisbi_deconv_cmipisr,Dupe_FX_2008_ip_proximal_ifdpniusr}, a variational approach, which is grounded on a judicious use of the Anscombe transform, was developed for the deconvolution of data contaminated by Poisson noise. A modification of the forward-backward algorithm
was subsequently proposed in finite dimension in order to solve the associated optimization problem. Additional comments concerning this approach will be given in Sections \ref{sec:icfDR} and \ref{sec:poisson}.
A key tool in the study of the aforementioned methods is the proximity operator introduced by Moreau in 1962 \cite{Moreau_JJ_1962_cras_Fonctions_cdeppdueh,Moreau_J_1965_bsmf_Proximite_eddueh}. The proximity operator of  $f \in  \Gamma_0(\HH)$ is 
$\prox_f\colon\HH \to \HH\colon x \mapsto \arg\min_{y \in \HH} \Frac12\left\|y-x\right\|^{2} + f(y)$. We thus see that $\prox_{\ic}$ reduces to the projection $P_C$ onto the convex set $C$. The function $f$ in Problem~\ref{pb:minimisation} may be non-smooth and, actually, it is often chosen as an $\ell^1$-norm, in which case its proximity operator reduces to a componentwise soft-thresholding \cite{Combettes_PL_2005_mms_Signal_rbpfbs}.
In \cite{Combettes_PL_2007_siamopt_proximal_tafmoob}, the authors derived the concept of proximal thresholding by considering a larger set of non-differentiable convex functions.

The goal of this paper is to propose iterative algorithms 
allowing us to solve Problem~\ref{pb:minimisation} when $C\neq \HH$.
The relevance of the proposed methods is shown for 
image recovery problems where convex constraints on the solution need to be satisfied.

In Section \ref{sec:tools}, we start by recalling some properties of the proximity operator. Then, in Section \ref{se:compprox} we briefly describe the forward-backward and Douglas-Rachford methods. As the proximity operator of the sum of the indicator
function of a convex set and a function in $\Gamma_0(\HH)$ cannot be easily expressed
in general, we propose two iterative methods to compute this operator: the first one is a forward-backward algorithm, whereas the second one is a Douglas-Rachford algorithm.
We also investigate the specific convergence properties of these two algorithms.
In Section \ref{sec:algorithm}, we derive two iterative methods to solve Problem \ref{pb:minimisation} and their convergence behaviours are studied. Finally, in Section \ref{sec:application}, these algorithms are applied to a class of image recovery problems. In this case, the Lipschitz-continuity property of the gradient of $g$ is not satisfied in the considered maximum a posteriori criterion. To overcome this difficulty, a quadratic extension technique providing a lower approximation of the objective function is introduced. Numerical results concerning 
deconvolution problems in the presence of signal-dependent Gaussian noise or Poisson noise
are then provided.

\section{Some properties of proximity operators}\label{sec:tools}
As already mentioned, the proximity operator of $\ic + f$ plays a key role in our approach.
Some  useful results for the calculation of  $\prox_{\ic + f}$ are first recalled.
Subsequently, the domain of a function $f\,:\,\HH \to ]-\infty,+\infty]$ is denoted by $\dom f = \{x \in \HH \;\mid\; f(x) < +\infty\}$.
\begin{proposition}{\rm \cite[Proposition 12]{Combettes_PL_2007_istsp_Douglas_rsatncvsr}} \label{prop:jstsp}
Let $f\in \Gamma_0(\mathcal{H})$ and let $C$ be a closed convex subset of $\mathcal{H}$ such that $C\,\cap\, \dom f\, \neq \, \emp$. Then the following properties hold.
\begin{enumerate}
\item\label{prop:jstspi}  $(\forall x\in \mathcal{H}),\, \prox_f x\in C\Rightarrow \prox_{\ic+f}x = \prox_{f}x$
\item\label{prop:jstspii} Suppose that $\mathcal{H}=\mathbb{R}$. Then 
\begin{equation}
\prox_{\ic+f} = P_C\circ \prox_f. \label{eq:ppcp}
\end{equation}
\end{enumerate}
\end{proposition}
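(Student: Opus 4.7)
The plan is to dispatch (i) by a direct appeal to the defining variational problem, then reduce (ii) to (i) plus a one-dimensional monotonicity argument.

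For (i), I would unfold the definition: $\prox_{\ic+f}x$ is the (unique, by strict convexity of the squared norm) minimizer over $\HH$ of $y\mapsto\tfrac12\|y-x\|^2+\ic(y)+f(y)$, equivalently of $y\mapsto\tfrac12\|y-x\|^2+f(y)$ restricted to $y\in C$. If $p:=\prox_f x\in C$, then $p$ is already the unique global minimizer of this objective on $\HH$, hence a fortiori the unique minimizer on $C$. This immediately gives $\prox_{\ic+f}x=p=\prox_f x$, which is one-line.

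For (ii), set $\varphi(y):=\tfrac12(y-x)^2+f(y)$. This lies in $\Gamma_0(\RR)$, is strictly convex (because $\tfrac12(\cdot-x)^2$ is), and has $p=\prox_f x$ as its unique minimizer on $\RR$. Because $C$ is a closed convex subset of $\RR$, it is an interval; let $a:=\inf C\in[-\infty,+\infty)$ and $b:=\sup C\in(-\infty,+\infty]$. Split into three cases.
\begin{enumerate}
\item If $p\in C$, part (i) yields $\prox_{\ic+f}x=p$, and $P_C(p)=p$ as well.
\item If $p<a$ (so $a$ is finite and $P_C(p)=a$), I want to show $\prox_{\ic+f}x=a$. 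Pick $y_0\in C\cap\dom f$ (nonempty by hypothesis); then $p<a\le y_0$, and convexity of $\dom f$ gives $[p,y_0]\subset\dom f$, hence $a\in\dom f$ and $\varphi(a)<+\infty$. Strict convexity of $\varphi$ together with $p$ being its global minimizer forces $\varphi$ to be strictly increasing on $[p,+\infty)$, so for every $y\in C$, $\varphi(y)\ge\varphi(a)$ with equality iff $y=a$. Thus $a$ is the unique minimizer of $\varphi+\ic$, giving $\prox_{\ic+f}x=a=P_C(p)$.
\item The case $p>b$ is symmetric (strict decrease on $(-\infty,p]$ and $P_C(p)=b$).
\end{enumerate}

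I do not foresee a real obstacle; the only subtle point is the domain issue in the proof of case 2 --- one must verify $a\in\dom f$ before invoking monotonicity, which is why the convex-interval argument connecting $p\in\dom f$ to some $y_0\in C\cap\dom f$ is needed. The essential reason the identity (\ref{eq:ppcp}) holds is precisely that $\RR$ is totally ordered: a convex function's sublevel sets on the complement of its minimum are half-lines, so projecting the unconstrained minimizer onto an interval always lands on the correct endpoint. This is exactly what fails in higher dimensions, which explains why the equality is restricted to $\HH=\RR$.
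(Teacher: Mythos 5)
Your proof is correct. Note that the paper itself gives no proof of this proposition --- it is imported verbatim as Proposition 12 of the cited Douglas--Rachford reference --- so there is nothing to compare against within this manuscript; your argument is a valid self-contained derivation. Part (i) is indeed the one-line observation that a global minimizer of $y\mapsto\tfrac12\|y-x\|^2+f(y)$ lying in $C$ is a fortiori the (unique) constrained minimizer, and in part (ii) you correctly identify and close the only delicate point, namely that the endpoint $a=\inf C$ must be shown to lie in $\dom f$ (via convexity of $\dom f$ and the segment joining $p=\prox_f x$ to a point of $C\cap\dom f$) before the strict-monotonicity argument on $[p,+\infty)$ can be applied; your closing remark about total order on $\RR$ being the reason the identity fails in higher dimensions is consistent with the paper's Examples 2.3 and 2.4.
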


Note that, the second part of this proposition can be generalized, yielding the following result which appears also as an extension of \cite[Proposition~2.10]{Chaux_C_2007_ip_variational_ffbip} when $C\neq\HH$:
\begin{proposition}
\label{l:decomp}
Let $\KK$ be a nonempty subset of \NN, $(o_k)_{k\in\KK}$ be an orthonormal basis of $\HH$ and $(\varphi_k)_{k\in\KK}$ be functions in $\Gamma_0(\RR)$.
Set
\begin{equation}
f\colon\HH\to\RX\colon x\mapsto
\sum_{k\in\KK}\varphi_k(\scal{x}{o_k}).\label{eq:fsep}
\end{equation}
Let
\begin{equation}
C = \bigcap_{k\in \KK} \{x\in \HH\;\mid\;\scal{x}{o_k} \in C_k\}\label{eq:Csep}
\end{equation}
where $(C_k)_{k\in \KK}$ are closed intervals in $\RR$ such that
$(\forall k \in \KK)$ $C_k \cap \dom\varphi_k \neq \emp$.\\
Suppose that either $\KK$ is finite, or there exists a subset $\LL$ of $\KK$ 
such that:
\begin{enumerate}
\item
\label{p:quezoniva}
$\KK\smallsetminus \LL$ is finite;
\item
\label{p:quezonivb}
$(\forall k\in\LL)$ $\varphi_k\geq \varphi_k(0)=0$ and $0 \in C_k$.
\end{enumerate}
Then, 
\begin{equation}
(\forall x\in\HH)\quad\prox_{\ic+f}{x}=
\sum_{k\in\KK}\pi_k o_k
\label{eq:ressep}
\end{equation}
where 
\begin{equation}
\pi_k = \begin{cases}
\inf C_k & \mbox{if $\prox_{\varphi_k}\scal{x}{o_k} < \inf C_k$}\\
\sup C_k & \mbox{if $\prox_{\varphi_k}\scal{x}{o_k} > \sup C_k$}\\
\prox_{\varphi_k}\scal{x}{o_k} & \mbox{otherwise.}\\
\end{cases}
\label{eq:ressep2}
\end{equation}
\end{proposition}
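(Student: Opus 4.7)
The plan is to exploit separability: since $f$ and $C$ decompose coordinatewise in the basis $(o_k)_{k\in\KK}$, the strictly convex minimization defining $\prox_{\ic+f}x=\arg\min_{y\in\HH}\tfrac12\|y-x\|^2+\ic(y)+f(y)$ will decouple. Writing $x=\sum_{k\in\KK}\xi_k o_k$ and $y=\sum_{k\in\KK}\eta_k o_k$ with $\xi_k=\scal{x}{o_k}$ and $\eta_k=\scal{y}{o_k}$, Parseval yields $\|y-x\|^2=\sum_{k\in\KK}(\eta_k-\xi_k)^2$; by \eqref{eq:Csep}, $\ic(y)=\sum_{k\in\KK}\iota_{C_k}(\eta_k)$; and by \eqref{eq:fsep}, $f(y)=\sum_{k\in\KK}\varphi_k(\eta_k)$. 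Hence the objective $\Phi(y)=\tfrac12\|y-x\|^2+\ic(y)+f(y)$ rewrites as $\Phi(y)=\sum_{k\in\KK}h_k(\eta_k)$ where
\[
h_k(\eta)=\tfrac12(\eta-\xi_k)^2+\varphi_k(\eta)+\iota_{C_k}(\eta).
\]

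Each $h_k$ belongs to $\Gamma_0(\RR)$ (its domain contains $C_k\cap\dom\varphi_k\neq\emp$) and is strictly convex, so it admits a unique minimizer $\pi_k=\prox_{\iota_{C_k}+\varphi_k}(\xi_k)$. Proposition~\ref{prop:jstsp}\ref{prop:jstspii}, applied on the line, gives $\pi_k=P_{C_k}(\prox_{\varphi_k}\xi_k)$. As $C_k\subset\RR$ is a closed interval, $P_{C_k}$ is the one-dimensional clipping onto $[\inf C_k,\sup C_k]$, which is exactly the piecewise formula \eqref{eq:ressep2}.

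When $\KK$ is finite the scalar minimizations combine immediately: $\bar{y}=\sum_k\pi_k o_k\in\HH$, $\Phi(\bar{y})=\sum_k h_k(\pi_k)\leq\Phi(y)$ for every $y\in\HH$, and \eqref{eq:ressep} follows. The delicate part is the infinite case, where one must certify that $\bar{y}$ really belongs to $\HH$ and that $\Phi(\bar{y})<\pinf$, so that the pointwise lower bound $\Phi(y)\geq\sum_k h_k(\pi_k)$ (valid for every admissible $y$ by coordinatewise optimality of $\pi_k$) is attained at $\bar{y}$. I plan to leverage the hypotheses on $\LL$ as follows. For $k\in\LL$, $\varphi_k\geq\varphi_k(0)=0$ implies $\prox_{\varphi_k}0=0$, and since $0\in C_k$ is a fixed point of $P_{C_k}$, the nonexpansiveness of $\prox_{\varphi_k}$ and of $P_{C_k}$ yields $|\pi_k|\leq|\xi_k|$. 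Moreover, $0\in C_k\cap\dom\varphi_k$ and $\varphi_k\geq 0$ give
\[
0\leq\varphi_k(\pi_k)\leq h_k(\pi_k)\leq h_k(0)=\tfrac12\xi_k^2.
\]
Summing over $k\in\LL$ and adding the finitely many contributions from $\KK\smallsetminus\LL$ (each finite because $h_k(\pi_k)<\pinf$) shows that $\sum_{k\in\KK}\pi_k^2<\pinf$, so $\bar{y}\in\HH$, and that $f(\bar{y})<\pinf$. Combined with the coordinatewise lower bound, $\bar{y}$ is the unique minimizer of $\Phi$ over $\HH$, establishing \eqref{eq:ressep}.
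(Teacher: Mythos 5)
Your proof is correct, and its skeleton coincides with the paper's: both decompose $f+\ic$ coordinatewise as $\sum_{k\in\KK}(\varphi_k+\iota_{C_k})(\scal{\cdot}{o_k})$ and then reduce each scalar proximity operator to $P_{C_k}\circ\prox_{\varphi_k}$ via Proposition~\ref{prop:jstsp}\ref{prop:jstspii}, which yields the clipping formula \eqref{eq:ressep2}. The one genuine difference is how the separability of the proximity operator across the orthonormal basis is justified in the infinite case: the paper simply verifies the hypothesis $(\forall k\in\LL)$ $\varphi_k+\iota_{C_k}\ge(\varphi_k+\iota_{C_k})(0)=0$ and cites \cite[Remark~3.2(ii) and Proposition~2.10]{Chaux_C_2007_ip_variational_ffbip}, whereas you prove that separability result from scratch — showing $|\pi_k|\le|\xi_k|$ for $k\in\LL$ via nonexpansiveness of $\prox_{\varphi_k}$ and $P_{C_k}$ fixing $0$, hence $\sum_k\pi_k^2<\pinf$ and $\Phi(\bar y)<\pinf$, so the coordinatewise lower bound is attained. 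Your version is self-contained and makes explicit exactly where hypotheses \ref{p:quezoniva} and \ref{p:quezonivb} are used (to guarantee $\bar y\in\HH$ and finiteness of the objective at $\bar y$), at the cost of a longer argument; the paper's version is shorter but opaque about why those hypotheses are needed. The only step you gloss over slightly is the term-by-term comparison $\sum_k h_k(\eta_k)\ge\sum_k h_k(\pi_k)$ for infinite sums whose terms are not all nonnegative, but this is harmless since the possibly negative terms are confined to the finite set $\KK\smallsetminus\LL$.
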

\begin{proof}
Due to the form of $f$ and $C$, one can write,
\begin{eqnarray*}
(\forall x\in \HH) \qquad \big(f + \iota_{C}\big)(x) = \sum_{k\in \KK}
(\varphi_k+\iota_{C_k})(\langle x,o_k \rangle).
\end{eqnarray*} 
For every $k\in \KK$, $\varphi_k + \iota_{C_k} \in \Gamma_0(\RR)$ 
since $\varphi_k \in \Gamma_0(\RR)$ and
$C_k$ is assumed to be a closed convex set having a nonempty intersection with
$\dom \varphi_k$.
If $\KK$ is not finite, in view of Assumption~\ref{p:quezonivb},
we have 
$(\forall k \in \LL)$ $\varphi_k + \iota_{C_k} \ge (\varphi_k+i_{C_k})(0) = 0$.
From  {\rm\cite[Remark~3.2(ii) and Proposition~2.10]{Chaux_C_2007_ip_variational_ffbip}}, it can be deduced that
\begin{equation}
(\forall x\in \HH) \qquad
\prox_{f + \iota_{C}} x = \sum_{k\in \KK} 
\big(\prox_{\varphi_k+\iota_{C_k}}\langle x,o_k \rangle\big) o_k.
\label{eq:a11}
\end{equation}
On the other hand,
since for every $k\in \KK$, $C_k$ is a closed interval in $\RR$ such that $C_k \cap \dom \varphi_k \neq \emp$, it follows from Proposition~\ref{prop:jstsp}\ref{prop:jstspii}, that
\begin{align}
\prox_{\varphi_k + \iota_{C_k}} \langle x,o_k \rangle = &
(P_{C_k} \circ \prox_{\varphi_k}) (\langle x,o_k \rangle)\nonumber\\
 = &
\begin{cases}		
\inf C_k, &\mbox{if $\prox_{\varphi_{k}} \langle x,o_k \rangle <\inf C_k$}\\
\prox_{\varphi_k} \langle x,o_k \rangle, &\mbox{if $\prox_{\varphi_{k}} \langle x,o_k \rangle \in C_k$}\\
\sup C_k, &\mbox{if $\prox_{\varphi_{k}} \langle x,o_k \rangle  >\sup C_k$}.	
\end{cases}
\label{eq:a12}
\end{align}
Combining \eqref{eq:a11} and \eqref{eq:a12} yields \eqref{eq:ressep} and \eqref{eq:ressep2}.
\end{proof}

A function $f$ (resp. convex $C$) satisfying \eqref{eq:fsep} (resp. \eqref{eq:Csep}) will be said \emph{separable}. Note that \eqref{eq:ressep} and \eqref{eq:ressep2} imply that \eqref{eq:ppcp} holds.
However, this relation has been proved under the restrictive assumption that both $f$ \emph{and} $C$ are separable. In general, when either $f$ \emph{or} $C$ is not separable, \eqref{eq:ppcp}
is no longer valid. Let us give two simple counterexamples to illustrate this fact.

\begin{example}\label{ex:ce1}
Let $\HH = \RR^2$ and $f$ be the function defined by $(\forall x \in \RR^2)$
$f(x) = \frac{1}{2}x^\top\Lambda x$ with 
$\Lambda = \begin{pmatrix} 1&\Lambda_{1,2} \\ \Lambda_{1,2} & \Lambda_{2,2} \end{pmatrix}$
where $\Lambda_{2,2} \ge 0$ and $|\Lambda_{1,2}| \le \Lambda_{2,2}^{1/2}$.
Let $C=[-1,1]^2$. This convex set is separable w.r.t. the canonical basis of $\RR^2$.\\
Now, set $x = 2(\Lambda_{1,2},1+\Lambda_{2,2})^\top$. After some calculations
(see Appendix \ref{ap:exce1}), one obtains:
\begin{itemize}
\item $P_C(\prox_{f}x)=(0,1)^\top$
\item $\prox_{\ic+f}x = (\pi,1)^\top$ where
\begin{equation}
\pi =
\begin{cases}		
\frac{\Lambda_{1,2}}{2} &\mbox{if $\Lambda_{1,2}\in [-2,2]$}\\
1 &\mbox{if $\Lambda_{1,2} > 2$}\\
-1 & \mbox{if $\Lambda_{1,2} < -2$.}	
\end{cases}
\label{eq:defpi}
\end{equation}
\end{itemize}
We conclude that \eqref{eq:ppcp} is not satisfied as soon as $\Lambda_{1,2}\neq 0$, that is 
$f$ is not separable.
\end{example}

\begin{example}\label{ex:ce2}
Let $\HH = \RR^2$.
Consider the separable function defined by
$(\forall x = (x^{(1)},x^{(2)})^\top \in \RR^2)$ $f(x) = (1+\Lambda_{1,2}) (x^{(1)})^2+ (1-\Lambda_{1,2}) 
(x^{(2)})^2$ where $0 < |\Lambda_{1,2}| \le 1$. Let
the nonseparable convex set $C$ be defined by
\begin{equation*}
C = \{x = (x^{(1)},x^{(2)})^\top\in \RR^2\;\mid\; \max(|x^{(1)}-x^{(2)}|,|x^{(1)}+x^{(2)}|) \le \sqrt{2}\}.
\end{equation*}  
In this case, it is shown in Appendix \ref{ap:exce2} that \eqref{eq:ppcp} does not hold. 
\end{example}

In summary, for an arbitrary function in $\Gamma_0(\HH)$ and an arbitrary closed convex set, we cannot trust \eqref{eq:ppcp} to determine 
the proximity operator of the sum of this function
and the indicator function of the convex set.
In the next section, we will propose efficient approaches to compute the desired proximity operator in a general setting.

Other more classical properties of the proximity operator which will be used in the paper are provided in the sequel.
\begin{proposition}\  \label{prop:proxquad}
\begin{enumerate}
 \item \label{p:proxlin} If $f = h + \kappa \scal{\cdot}{x}$
where $h \in \Gamma_0(\HH)$, $x \in \HH$ and $\kappa \in \RR$,
then $\prox_f = \prox_h(\cdot -\kappa x)$.
\item \label{p:proxquad} If $f = h + \vartheta \|\cdot\|^2/2$ where 
$h\in \Gamma_0(\HH)$ and $\vartheta \in \RPP$,
then
\begin{enumerate}
\item \label{p:proxquada} $\prox_f = \prox_{(1+\vartheta)^{-1}h}\big(\cdot/(1+\vartheta)\big)$
\item \label{p:proxquadb} $(\forall (y,z)\in \HH^2)$ $\scal{\prox_f y - \prox_f z}{y-z} 
\ge (1+\vartheta) \|\prox_f y - \prox_f z\|^2$
\item \label{p:proxquadc} $\prox_f$ is strictly contractive\footnote{An operator is strictly contractive
with constant $\beta$ if it is $\beta$-Lipschitz continuous
and $\beta \in ]0,1[$.} with constant $(1+\vartheta)^{-1}$.
\end{enumerate}
\end{enumerate}
\end{proposition}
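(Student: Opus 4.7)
The whole statement follows directly from the variational definition of the proximity operator together with its standard firm nonexpansiveness property, so the structure of my argument will be almost computational.

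For part \ref{p:proxlin}, I would start from
$\prox_f y = \arg\min_{u\in \HH} \frac12\|u-y\|^2 + h(u) + \kappa\scal{u}{x}$
and complete the square in $u$. Expanding $\frac12\|u-y\|^2$, the two linear-in-$u$ contributions $-\scal{u}{y}$ and $\kappa\scal{u}{x}$ combine into $-\scal{u}{y-\kappa x}$, so the minimand rewrites as $\frac12\|u-(y-\kappa x)\|^2 + h(u)$ plus a constant independent of $u$. The $\arg\min$ is then, by definition, $\prox_h(y-\kappa x)$.

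For part \ref{p:proxquada}, the same idea applies: I would merge $\frac12\|u-y\|^2$ with $\frac\vartheta2\|u\|^2$ into the single quadratic $\frac{1+\vartheta}{2}\|u - (1+\vartheta)^{-1}y\|^2$, up to a constant. Factoring $1+\vartheta$ outside the minimization transforms $h$ into $(1+\vartheta)^{-1}h$ and the anchor point into $y/(1+\vartheta)$, giving the announced identity. For part \ref{p:proxquadb}, I would invoke the subdifferential characterization $p=\prox_f w \iff w-p\in\partial f(p)$. Since $\frac\vartheta2\|\cdot\|^2$ is continuous and everywhere finite, the Moreau--Rockafellar sum rule yields $\partial f = \partial h + \vartheta\,\Id$. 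Setting $p_y=\prox_f y$, $p_z=\prox_f z$, I subtract the two inclusions $y-(1+\vartheta)p_y\in\partial h(p_y)$ and $z-(1+\vartheta)p_z\in\partial h(p_z)$, take the inner product with $p_y-p_z$, and use the monotonicity of $\partial h$ to conclude $\scal{p_y-p_z}{y-z}\geq(1+\vartheta)\|p_y-p_z\|^2$.

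Part \ref{p:proxquadc} is then immediate from Cauchy--Schwarz applied to the left-hand side of \ref{p:proxquadb}: $\|p_y-p_z\|\,\|y-z\|\ge(1+\vartheta)\|p_y-p_z\|^2$, so $\|p_y-p_z\|\le(1+\vartheta)^{-1}\|y-z\|$, and since $\vartheta\in\RPP$ the constant $(1+\vartheta)^{-1}$ lies in $]0,1[$. There is no genuine obstacle here; the only point that deserves a word of justification is the sum rule $\partial f=\partial h+\vartheta\Id$ used in \ref{p:proxquadb}, which holds unconditionally because the quadratic summand has full domain and is everywhere continuous.
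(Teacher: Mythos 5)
Your proof is correct, and for the key inequality \ref{p:proxquadb} it takes a genuinely different route from the paper. The paper disposes of \ref{p:proxlin} and \ref{p:proxquada} by citing them as straightforward calculations (essentially the completion-of-squares you carry out explicitly), and then derives \ref{p:proxquadb} by composing two known facts: the identity \ref{p:proxquada} itself, and the firm nonexpansiveness of $\prox_{(1+\vartheta)^{-1}h}$; substituting $y/(1+\vartheta)$ and $z/(1+\vartheta)$ into the firm nonexpansiveness inequality and rescaling produces the factor $(1+\vartheta)$. You instead work directly with the inclusion $w-p\in\partial f(p)$, the sum rule $\partial f=\partial h+\vartheta\,\Id$ (correctly justified by the full domain and continuity of the quadratic term), and the monotonicity of $\partial h$; subtracting the two inclusions and pairing with $p_y-p_z$ gives the inequality without ever invoking \ref{p:proxquada}. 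Your argument makes the underlying mechanism --- strong monotonicity of $\partial f$ with modulus $\vartheta$, hence of $\Id+\partial f$ with modulus $1+\vartheta$ --- explicit and self-contained, whereas the paper's is shorter because it leans on the firm nonexpansiveness lemma already quoted from the literature (which is itself usually proved by exactly the monotonicity computation you perform). Part \ref{p:proxquadc} via Cauchy--Schwarz coincides with the paper's argument.
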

\begin{proof}
Properties \ref{p:proxlin} and \ref{p:proxquada} result from straightforward calculations \cite[Lemma 2.6]{Combettes_PL_2005_mms_Signal_rbpfbs}. \ref{p:proxquadb} follows from the fact that $\prox_{(1+\vartheta)^{-1}h}$ is firmly nonexpansive \cite[Lemma 2.4]{Combettes_PL_2005_mms_Signal_rbpfbs}, i.e.
\begin{equation*}
(\forall (y,z)\in \HH^2)\qquad \scal{\prox_{\frac{h}{1+\vartheta}} y - \prox_{\frac{h}{1+\vartheta}} z}{y-z} \ge \|\prox_{\frac{h}{1+\vartheta}} y - \prox_{\frac{h}{1+\vartheta}} z\|^2.
\end{equation*}
Thus, by using \ref{p:proxquada}, we have
\begin{align*}
(\forall (y,z)\in \HH^2)\qquad 
&\scal{\prox_f y - \prox_f z}{y-z}\nonumber\\
= &(1+\vartheta) \left\langle\prox_{\frac{h}{1+\vartheta}}\Big(\frac{y}{1+\vartheta}\Big) - 
\prox_{\frac{h}{1+\vartheta}}\Big(\frac{z}{1+\vartheta}\Big), \frac{y}{1+\vartheta}-\frac{z}{1+\vartheta}\right\rangle\nonumber\\
\ge &(1+\vartheta) \left\|\prox_{\frac{h}{1+\vartheta}}\Big(\frac{y}{1+\vartheta}\Big) - \prox_{\frac{h}{1+\vartheta}} \Big(\frac{z}{1+\vartheta}\Big)\right\|^2\nonumber\\
=  &(1+\vartheta)\|\prox_f y - \prox_f z\|^2.
\end{align*}
Property \ref{p:proxquadc} can then be deduced, by invoking the Cauchy-Schwarz inequality:
\begin{align*}
(\forall (y,z)\in \HH^2)\qquad 
(1+\vartheta)\|\prox_f y - \prox_f z\|^2 &\le 
\scal{\prox_f y - \prox_f z}{y-z}\nonumber\\
 &\le \|\prox_f y - \prox_f z\|\|y - z\|.
\end{align*}
\end{proof}

Recall that a function $f\in \Gamma_0(\HH)$ satisfying the assumptions in \ref{p:proxquad} is said to be
strongly convex with modulus $\vartheta$.

\begin{proposition} {\rm \cite[Proposition 11]{Combettes_PL_2007_istsp_Douglas_rsatncvsr}}
\label{p:linprox}
Let $\GG$ be a real Hilbert space, let $f\in\Gamma_0(\GG)$,  and
let $L\colon\HH\to\GG$ be a bounded linear operator.
% with closed range $\ran L$. 
Suppose that the composition of $L$ and $L^*$ satisfies
$L\circ L^*=\nu\,\Id$, for some $\nu\in\RPP$.
%, and that 
%\begin{equation}
%\label{e:1938}
%\bigcup_{\lambda > 0} \menge{\lambda x}{x \in \dom f-\ran L}
%\;\text{is a closed vector subspace of $\GG$}.
%\end{equation}
Then $f\circ L\in\Gamma_0(\HH)$ and
\begin{equation}
\label{e:pfL}
\prox_{f\circ L}=\Id+\nu^{-1}L^*\circ(\prox_{\nu f}-\Id)\circ L.
\end{equation}
\end{proposition}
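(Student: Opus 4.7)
The plan is to reduce the proximal minimization defining $\prox_{f\circ L}$ to an equivalent minimization that decouples into a component in $\GG$ (resolved by $\prox_{\nu f}$) and a component in $\ker L$ (which drops out), exploiting the fact that the hypothesis $L\circ L^{*}=\nu\,\Id$ forces $L^{*}/\sqrt{\nu}$ to be an isometry.

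First I would establish $f\circ L\in\Gamma_0(\HH)$. Convexity of $f\circ L$ is immediate since $L$ is linear, and lower semicontinuity follows from the continuity of the bounded operator $L$. Properness requires $\dom(f\circ L)\neq\emp$: the identity $L\circ L^{*}=\nu\Id$ with $\nu>0$ forces $L$ to be surjective, so $L^{-1}(\dom f)\neq\emp$.

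Next, for the main formula, fix $x\in\HH$ and denote $p=\prox_{f\circ L}x$, which is the unique minimizer of $y\mapsto\tfrac12\|y-x\|^{2}+f(Ly)$. I would use the orthogonal decomposition $\HH=\overline{\ran L^{*}}\oplus\ker L$. Writing $y=L^{*}u+w$ with $w\in\ker L$ and $u\in\GG$, one has $Ly=LL^{*}u=\nu u$, hence $f(Ly)=f(\nu u)$. The key computation is the isometry-type identity
\begin{equation*}
\|L^{*}u\|^{2}=\scal{LL^{*}u}{u}=\nu\|u\|^{2},
\end{equation*}
so that writing also $x=L^{*}v+x_{0}$ with $v=\nu^{-1}Lx$ and $x_{0}\in\ker L$, Pythagoras gives $\|y-x\|^{2}=\nu\|u-v\|^{2}+\|w-x_{0}\|^{2}$. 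The objective therefore decouples into
\begin{equation*}
\tfrac{\nu}{2}\|u-v\|^{2}+f(\nu u)\;+\;\tfrac12\|w-x_{0}\|^{2},
\end{equation*}
so the optimal $w$ is $x_{0}$, while the optimal $u$ is characterized, via the change of variables $z=\nu u$, as the minimizer of $\tfrac12\|z-\nu v\|^{2}+\nu f(z)$; that is, $\nu u=\prox_{\nu f}(\nu v)=\prox_{\nu f}(Lx)$.

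Finally I would reassemble: $p=L^{*}u+x_{0}=\nu^{-1}L^{*}\prox_{\nu f}(Lx)+x-\nu^{-1}L^{*}Lx=x+\nu^{-1}L^{*}\bigl(\prox_{\nu f}(Lx)-Lx\bigr)$, which is exactly \eqref{e:pfL}. The main obstacle is simply setting up the orthogonal decomposition and verifying that the cross-term vanishes and the problem decouples; once the Pythagorean identity together with $\|L^{*}u\|^{2}=\nu\|u\|^{2}$ is in hand, the remainder is a direct change of variables.
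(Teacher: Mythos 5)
The paper offers no proof of this proposition; it is imported verbatim as \cite[Proposition~11]{Combettes_PL_2007_istsp_Douglas_rsatncvsr}, so there is nothing internal to compare your argument against. Your proof is correct and self-contained. It differs in flavour from the usual published argument, which takes the candidate $p=x+\nu^{-1}L^*\big(\prox_{\nu f}(Lx)-Lx\big)$ and verifies the subdifferential characterization $x-p\in\partial(f\circ L)(p)=L^*\partial f(Lp)$ (noting $Lp=\prox_{\nu f}(Lx)$ because $LL^*=\nu\Id$); that route needs a chain rule for $\partial(f\circc L)$ under a qualification condition, whereas yours avoids subdifferential calculus entirely by decoupling the proximal minimization along $\HH=\ran L^*\oplus\ker L$ — arguably more elementary. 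One point you should make explicit rather than leave implicit: the identity $\|L^*u\|^2=\scal{LL^*u}{u}=\nu\|u\|^2$ shows both that $L^*$ is injective and that $\ran L^*$ is closed, so that $\overline{\ran L^*}=\ran L^*$ and every $y\in\HH$ really does admit a unique representation $y=L^*u+w$ with $u\in\GG$, $w\in\ker L$; without closedness the parametrization over $\GG\times\ker L$ would not exhaust $\HH$. With that observation in place, the Pythagorean splitting, the change of variables $z=\nu u$ (after which the $u$-subproblem becomes $\min_z\frac{1}{2}\|z-Lx\|^2+\nu f(z)$, i.e.\ $z=\prox_{\nu f}(Lx)$), and the reassembly $p=x+\nu^{-1}L^*(\prox_{\nu f}(Lx)-Lx)$ are all sound, as is the surjectivity argument for properness of $f\circ L$.

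Correction to a typo above: read $\partial(f\circ L)$ for the composition; the argument intended is the standard one.
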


\section{Iterative solutions to the minimization of a sum of two convex functions} \label{se:compprox}
\subsection{Forward-backward approach}
\label{subsec:fb}
Consider the following optimization problem, which is a specialization of 
Problem \ref{pb:minimisation}:
\begin{problem}\label{pb:minimisation1}
Let $f_1$ and $f_2$ be two functions in $\Gamma_0(\HH)$ such that
$\operatorname{Argmin} f_1+f_2 \neq \emp$ and
$f_2$ is differentiable on $\HH$ with a $\beta$-Lipschitz continuous gradient for some $\beta \in \RPP$. 
\begin{equation*}
\text{Find}\quad\min_{x \in \HH} f_1(x) + f_2(x).
\label{eq:forward-backward}
\end{equation*}
\end{problem}

As mentioned in the introduction, the forward-backward algorithm is an effective method
to solve the above problem.

\subsubsection{Algorithm \cite[Eq.(3.6)]{Combettes_PL_2005_mms_Signal_rbpfbs}}
\label{se:fwalgo}
Let $x_0\in \HH$ be an initial value. The algorithm constructs a sequence
$(x_n)_{n\in \NN}$ by setting, for every $n\in\NN$,
\begin{eqnarray}
x_{n+1} = x_n + \lambda_n \big(\prox_{\gamma_n f_1}(x_n - \gamma_n \nabla f_2(x_n) +b_n) + a_n -x_n\big)
\label{eq:fb_algo}
\end{eqnarray} 
where  $\gamma_n >0$ is the algorithm step-size, $\lambda_n > 0$ is a relaxation parameter and $a_n \in \HH$ (resp. $b_n\in \HH$) represents an error allowed in the computation of the proximity operator (resp. the gradient).
The weak convergence of $(x_n)_{n\in \NN}$ to a solution to Problem \ref{pb:minimisation1} is then guaranteed provided that:
\begin{assumption}\hfill \label{a:gl}
\begin{enumerate}
\item \label{a:gli} $0 < \underline{\gamma} \le \overline{\gamma}
< 2\beta^{-1}$ where $\underline{\gamma} =\inf_{n\in \NN} \gamma_n$
and $\overline{\gamma} = \sup_{n\in \NN} \gamma_n$.
\item  \label{a:glii} $(\forall n \in \NN)$ $0<\underline{\lambda}\le \lambda_n \le 1$.
\item \label{a:gliii} $\sum_{n\in \NN} \|a_n\| < +\infty$ and
$\sum_{n\in \NN} \|b_n\| < +\infty$.
\end{enumerate}
\end{assumption}
More details concerning this algorithm can be found in \cite{Combettes_PL_2005_mms_Signal_rbpfbs,Chaux_C_2007_ip_variational_ffbip}
and conditions for the strong convergence of the algorithm are also given in \cite{Combettes_PL_2007_siamopt_proximal_tafmoob}.
An additional result which will be useful in this paper is the following:
\begin{lemma} \label{le:convlinfb}
Suppose that Assumptions \ref{a:gl}\ref{a:gli} and \ref{a:glii}
as well as the assumptions of Problem \ref{pb:minimisation1} hold. If 
$f_1$ is a strongly convex function with modulus $\vartheta$,
then the forward-backward algorithm
in \eqref{eq:fb_algo} with $a_n \equiv b_n \equiv 0$
converges linearly to the unique solution 
$\widetilde{x}$ to Problem \ref{pb:minimisation1}. 
More precisely, we have
\begin{equation}
(\forall n \in \NN)\qquad \|x_n -\widetilde{x}\| \le 
\Big(1-\frac{\underline{\lambda}\underline{\gamma}\vartheta}{1+\underline{\gamma}\vartheta}\Big)^n \|x_0 -\widetilde{x}\|.
\label{eq:linconv}
\end{equation}
\end{lemma}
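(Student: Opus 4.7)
The plan is to establish linear contraction in one step of the iteration and then iterate. Since $f_1$ is strongly convex with modulus $\vartheta$, the sum $f_1+f_2$ is also strongly convex, so the solution $\widetilde{x}$ is unique and is exactly the fixed point of the forward-backward operator $T_n = \prox_{\gamma_n f_1}\circ(\Id-\gamma_n \nabla f_2)$ for any choice of step-size $\gamma_n$. With $a_n=b_n=0$, the recursion reads
\begin{equation*}
x_{n+1}-\widetilde{x} = (1-\lambda_n)(x_n-\widetilde{x}) + \lambda_n\big(T_n(x_n)-T_n(\widetilde{x})\big),
\end{equation*}
so by the triangle inequality and the fact that $\lambda_n\in[0,1]$ (Assumption \ref{a:gl}\ref{a:glii}),
\begin{equation*}
\|x_{n+1}-\widetilde{x}\| \le (1-\lambda_n)\|x_n-\widetilde{x}\| + \lambda_n\|T_n(x_n)-T_n(\widetilde{x})\|.
\end{equation*}

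Next I bound $\|T_n(x_n)-T_n(\widetilde{x})\|$ by the product of two Lipschitz constants. First, Proposition \ref{prop:proxquad}\ref{p:proxquad}\ref{p:proxquadc} applied to $\gamma_n f_1$ (which is strongly convex with modulus $\gamma_n\vartheta$) gives that $\prox_{\gamma_n f_1}$ is strictly contractive with constant $(1+\gamma_n\vartheta)^{-1}$. Second, by the Baillon-Haddad theorem, the $\beta$-Lipschitz continuity of $\nabla f_2$ implies $\beta^{-1}$-cocoercivity of $\nabla f_2$, which together with Assumption \ref{a:gl}\ref{a:gli} ($\gamma_n<2/\beta$) makes $\Id-\gamma_n\nabla f_2$ nonexpansive. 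Composing yields
\begin{equation*}
\|T_n(x_n)-T_n(\widetilde{x})\| \le \frac{1}{1+\gamma_n\vartheta}\,\|x_n-\widetilde{x}\|.
\end{equation*}

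Plugging this back gives the one-step contraction
\begin{equation*}
\|x_{n+1}-\widetilde{x}\| \le \left(1-\frac{\lambda_n\gamma_n\vartheta}{1+\gamma_n\vartheta}\right)\|x_n-\widetilde{x}\|.
\end{equation*}
The function $\gamma\mapsto \gamma/(1+\gamma\vartheta)$ is increasing on $\RP$, so using $\gamma_n\ge\underline{\gamma}$ and $\lambda_n\ge\underline{\lambda}$ the factor is bounded above by $1-\underline{\lambda}\underline{\gamma}\vartheta/(1+\underline{\gamma}\vartheta)\in[0,1)$. A straightforward induction on $n$ then yields \eqref{eq:linconv}.

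I don't foresee any serious obstacle: the only place one must be a bit careful is in verifying that $\lambda_n\in[0,1]$ is what allows the convex combination to be turned into a triangle inequality bound with a nonnegative coefficient on $\|x_n-\widetilde{x}\|$, and that the monotonicity of $\gamma\mapsto\gamma/(1+\gamma\vartheta)$ (rather than monotonicity in $\gamma$ alone of a naive Lipschitz constant) is what licenses replacing $\gamma_n$ by $\underline{\gamma}$ in the final bound.
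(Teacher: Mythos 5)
Your proof is correct and follows essentially the same route as the paper: uniqueness of $\widetilde{x}$ from strong convexity, the fixed-point decomposition with the relaxation parameter, strict contractivity of $\prox_{\gamma_n f_1}$ via Proposition \ref{prop:proxquad}, and nonexpansivity of $\Id-\gamma_n\nabla f_2$ via Baillon--Haddad. The only (immaterial) difference is that the paper replaces the modulus $\gamma_n\vartheta$ by $\underline{\gamma}\vartheta$ before invoking contractivity, whereas you keep $\gamma_n$ and pass to $\underline{\gamma}$ at the end using the monotonicity of $\gamma\mapsto\gamma/(1+\gamma\vartheta)$.
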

\begin{proof}
Since $\operatorname{Argmin}f_1+f_2\neq \emp$ and $f_1$ is strongly (thus strictly)
convex, there exists a unique minimizer $\widetilde{x}$ of $f_1+f_2$.
Then, $\widetilde{x}$
is a fixed point of the forward-backward algorithm in \eqref{eq:fb_algo}
when $a_n \equiv b_n \equiv 0$. Thus, we have, for all $n\in \NN$,
\begin{equation*}
x_{n+1} -\widetilde{x} = (1-\lambda_n) (x_n -\widetilde{x})
+ \lambda_n \big(\prox_{\gamma_n f_1}(x_n -\gamma_n \nabla f_2(x_n))
- \prox_{\gamma_n f_1}(\widetilde{x} -\gamma_n \nabla f_2(\widetilde{x}))\big)
\end{equation*}
which yields
\begin{multline*}
\|x_{n+1} -\widetilde{x}\| \le  (1-\lambda_n) \|x_n -\widetilde{x}\|
\\+ \lambda_n \|\prox_{\gamma_n f_1}(x_n -\gamma_n \nabla f_2(x_n))
- \prox_{\gamma_n f_1}(\widetilde{x} -\gamma_n \nabla f_2(\widetilde{x}))\|.
\end{multline*}
Since $f_1$ has been assumed strongly convex with modulus $\vartheta$,
$\gamma_n f_1$ is strongly convex with modulus $\gamma_n \vartheta$
and, according to Assumption \ref{a:gl}\ref{a:gli}, it is also strongly convex
with modulus $\underline{\gamma}\vartheta$. We deduce from Proposition \ref{prop:proxquad}\ref{p:proxquadc} that  $\prox_{\gamma_n f_1}$ is strictly contractive with
constant $(1+\underline{\gamma}\vartheta)^{-1}$. Hence, we have
\begin{equation*}
\|x_{n+1} -\widetilde{x}\| \le  (1-\lambda_n) \|x_n -\widetilde{x}\|
+ \frac{\lambda_n}{1+\underline{\gamma}\vartheta} \|x_n -\gamma_n \nabla f_2(x_n)
- \widetilde{x} +\gamma_n \nabla f_2(\widetilde{x})\|.
\end{equation*}

Recall that an operator $R\;:\;\HH \to \HH$ is nonexpansive if $(\forall (y,z)\in \HH^2)$ $\|R(x)-R(y)\| \le \|x-y\|$.
An operator $T\;:\;\HH \to \HH$ is $\alpha$-averaged with $\alpha \in]0,1[$ if $T =  (1-\alpha) \Id + \alpha R$ where $R$ is a nonexpansive operator.

Since $f_2$ is a differentiable convex function having a $\beta$-Lipschitz continuous gradient with $\beta > 0$, we deduce from the Baillon-Haddad
theorem \cite{Baillon_JP_1977_jm_Quelques_pdoabencm}, that $\nabla f_2 /\beta$ is 1/2-average.
As $\gamma_n \in ]0,2/\beta[$ ,  by using  \cite[Lemma 2.3]{Combettes_PL_2004_o_Solving_mivconao}, $\Id - \gamma_n \nabla f_2$ is $\frac{\gamma_n \beta}{2}$-averaged and it is therefore nonexpansive (see \cite[Lemma 2.1(ii)]{Combettes_PL_2004_o_Solving_mivconao}).

This entails that
\begin{equation*}
\|x_n -\gamma_n \nabla f_2(x_n)
- \widetilde{x} +\gamma_n \nabla f_2(\widetilde{x})\|
\le \|x_n -\widetilde{x}\|
\end{equation*}
and, consequently,
\begin{equation*}
\|x_{n+1} -\widetilde{x}\| \le  \Big(1-\frac{\lambda_n\underline{\gamma}\vartheta}{1+\underline{\gamma}\vartheta}\Big) \|x_n -\widetilde{x}\| \le \Big(1-\frac{\underline{\lambda}\underline{\gamma}\vartheta}{1+\underline{\gamma}\vartheta} \Big)\|x_n -\widetilde{x}\|
\end{equation*}
which results in \eqref{eq:linconv}.
\end{proof}

The linear convergence of the forward-backward algorithm was also proved in \cite{Bredies_K_2007_submitted_Iterative_stcl,Chen_GHG_1997_jopt_Convergence_rifbs} under different assumptions.

\subsubsection{Computation of $\prox_{\ic+\kappa g}$}
\label{sss:picgfb}
Let $\gammat>0$ and $g$ be a differentiable function with $\beta$-Lipschitz continuous gradient where $\beta \in \RPP$. Let $C$ be a closed convex set such that
$C\neq \emp$.
 Then, for every $x \in \HH$, the determination of $\prox_{\ic+\kappa g}x$ can be viewed as a minimization problem of the form of Problem \ref{pb:minimisation1}. Indeed, by using the definition of the proximity operator, we have:
\begin{equation*}
(\forall x\in \HH) \qquad \prox_{\kappa g +\ic}x = \arg \min_{y\in\HH} \frac{1}{2}\left\|y-x\right\|^{2} + \kappa g(y) + \ic(y).
\end{equation*}
Now, we can set $f_1 =  \frac{1}{2}\left\|. -x\right\|^{2}+\ic$ and $f_2 =  \kappa g$.
The proximity operator of $\gamma_n f_1$ with $\gamma_n \in \RPP$, is the proximity operator of $\frac{\gamma_n}{2} \|\cdot\|^2 - \gamma_n\scal{\cdot}{x}+\ic$, which
is straightforwardly deduced 
from Proposition~\ref{prop:proxquad}\ref{p:proxlin} and \ref{p:proxquada}:
\begin{align}
(\forall y \in \HH)
\qquad \prox_{\gamma_n f_1}y 
= P_C\Big(\frac{y+\gamma_n x}{1+\gamma_n}\Big).
\label{eq:defproxf1fw}
\end{align}
whereas $f_2$ has a 
$\kappa \beta$-Lipschitz continuous gradient.
In this case, by setting $a_n \equiv b_n \equiv 0$
in Algorithm \eqref{eq:fb_algo}, we get
\begin{equation}
(\forall n\in \NN) \qquad x_{n+1} = x_n +
 \lambda_n 
\left(P_C\Big(\frac{x_n - \gamma_n ( \kappa \nabla g(x_n)-x)}{1+\gamma_n}\Big)-x_n\right)
 \label{eq:proxdifferentiable}
\end{equation}
with 
\begin{equation}
0 < \underline{\gamma} \le \gamma_n \le \overline{\gamma} < 2\kappa^{-1}\beta^{-1}.
\label{eq:condgammaic}
\end{equation}
The obtained algorithm possesses the following properties:
\begin{proposition} \label{p:fwbc}
Suppose that Condition \eqref{eq:condgammaic} and
Assumption \ref{a:gl}\ref{a:glii} hold. Consider the algorithm in
\eqref{eq:proxdifferentiable} where $x \in \mathcal{H}$.
 Then, 
\begin{enumerate}
\item\label{c1:pfb} 
we have:
\begin{equation}
(\forall n \in \NN) \qquad 
\|x_n - \prox_{\ic+\kappa g}x\| \le \rho^n \|x_0 - \prox_{\ic+\kappa g}x\|
\label{eq:lindecfb}
\end{equation}
where
\begin{equation}
\rho = 1- 
\frac{\underline{\lambda}\underline{\gamma}}{1+\underline{\gamma}}\,;
\label{eq:defrho}
\end{equation}
\item \label{c2:pfb} by setting $x_0=\prox_{\kappa g}x$, we get:
\begin{equation}
\prox_{\kappa g} x \in  C \quad \Rightarrow \quad (\forall n \in \NN)\;\;
x_n = \prox_{\ic+\kappa g}x.
\label{eq:consfwpjstsp}
\end{equation}
\end{enumerate}
\end{proposition}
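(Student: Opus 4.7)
The scheme \eqref{eq:proxdifferentiable} is exactly \eqref{eq:fb_algo} with zero errors, $f_1=\frac{1}{2}\|\cdot-x\|^2+\ic$ and $f_2=\kappa g$, so the plan is to recognize the computation of $\prox_{\ic+\kappa g}x$ as an instance of Problem \ref{pb:minimisation1} and invoke Lemma \ref{le:convlinfb}. The proximity operator of $\gamma_n f_1$ gives the projection expression \eqref{eq:defproxf1fw} already derived in the paragraph preceding the statement, which shows that \eqref{eq:proxdifferentiable} coincides with the forward-backward iterate for this $(f_1,f_2)$ pair.

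For \ref{c1:pfb}, I first check the hypotheses of Lemma \ref{le:convlinfb}. The function $f_2=\kappa g$ is differentiable with $\kappa\beta$-Lipschitz gradient, so Condition \eqref{eq:condgammaic} gives $\overline{\gamma}<2(\kappa\beta)^{-1}$, which is Assumption \ref{a:gl}\ref{a:gli}. Assumption \ref{a:gl}\ref{a:glii} is assumed. Moreover $f_1=\frac{1}{2}\|\cdot-x\|^2+\ic$ is strongly convex with modulus $\vartheta=1$, because it is the sum of the quadratic $\frac{1}{2}\|\cdot-x\|^2$ (strongly convex with modulus $1$) and the convex indicator $\ic$. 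Finally, the unique minimizer of $f_1+f_2=\frac{1}{2}\|\cdot-x\|^2+\ic+\kappa g$ is, by definition of the proximity operator, $\widetilde{x}=\prox_{\ic+\kappa g}x$, which is nonempty because $C\neq\emp$ guarantees $\operatorname{Argmin}(f_1+f_2)\neq\emp$. Substituting $\vartheta=1$ into \eqref{eq:linconv} gives the rate $1-\underline{\lambda}\underline{\gamma}/(1+\underline{\gamma})=\rho$, yielding \eqref{eq:lindecfb}.

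For \ref{c2:pfb}, I use Proposition \ref{prop:jstsp}\ref{prop:jstspi}: the hypothesis $\prox_{\kappa g}x\in C$ implies $\prox_{\ic+\kappa g}x=\prox_{\kappa g}x$, so $x_0=\prox_{\kappa g}x$ is already the target. It then remains to verify by induction that this is a fixed point of \eqref{eq:proxdifferentiable}. If $x_n=\prox_{\kappa g}x$, the characterization of the proximity operator of the differentiable function $\kappa g$ yields $\kappa\nabla g(x_n)=x-x_n$, whence
\begin{equation*}
\frac{x_n-\gamma_n\big(\kappa\nabla g(x_n)-x\big)}{1+\gamma_n}
=\frac{x_n-\gamma_n(x-x_n-x)}{1+\gamma_n}
=\frac{(1+\gamma_n)x_n}{1+\gamma_n}=x_n.
\end{equation*}
Since $x_n\in C$, $P_C(x_n)=x_n$, so \eqref{eq:proxdifferentiable} gives $x_{n+1}=x_n$, closing the induction and establishing \eqref{eq:consfwpjstsp}.

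There is no genuine obstacle here; the only care needed is the parameter bookkeeping that turns the generic Lipschitz constant $\beta$ of Lemma \ref{le:convlinfb} into $\kappa\beta$ under Condition \eqref{eq:condgammaic}, and the identification of the strong-convexity modulus of $f_1$ as $\vartheta=1$ so that the rate in \eqref{eq:linconv} reduces to $\rho$ in \eqref{eq:defrho}.
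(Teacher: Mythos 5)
Your proof is correct, and part \ref{c1:pfb} follows the paper's route exactly: identify \eqref{eq:proxdifferentiable} as the forward-backward iteration for $f_1=\frac{1}{2}\|\cdot-x\|^2+\ic$ (strongly convex with modulus $\vartheta=1$) and $f_2=\kappa g$, and apply Lemma~\ref{le:convlinfb} with the rescaled Lipschitz constant $\kappa\beta$.

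For part \ref{c2:pfb} you take a slightly different route. The paper simply reuses the contraction estimate \eqref{eq:lindecfb} just proved: with $x_0=\prox_{\kappa g}x\in C$, Proposition~\ref{prop:jstsp}\ref{prop:jstspi} gives $\|x_0-\prox_{\ic+\kappa g}x\|=0$, and \eqref{eq:lindecfb} forces $\|x_n-\prox_{\ic+\kappa g}x\|=0$ for all $n$. You instead verify directly, via the optimality condition $\kappa\nabla g(p)=x-p$ for $p=\prox_{\kappa g}x$, that $p$ is a fixed point of the recursion and close an induction. Both arguments are valid; the paper's is shorter because it leverages part \ref{c1:pfb}, while yours is self-contained and makes explicit \emph{why} the iteration stalls at $p$ (it is a genuine fixed point of the map, not merely a point at distance zero from the limit), which is perhaps more illuminating. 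No gaps.
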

\begin{proof}
\noindent\ref{c1:pfb} : 
As $f_1$ is strongly convex with modulus 1,
\eqref{eq:lindecfb} is obtained by invoking Lemma~\ref{le:convlinfb}.\\
\ref{c2:pfb} : If $x_0 = \prox_{\kappa g} x \in C$, then \eqref{eq:lindecfb}
leads to
\begin{equation}
(\forall n \in \NN) \qquad 
\|x_n - \prox_{\ic+\kappa g}x\| \le \Big(1- 
\frac{\underline{\lambda}\underline{\gamma}}{1+\underline{\gamma}}\Big)^n 
\| \prox_{\kappa g} x  - \prox_{\ic+\kappa g}x\| = 0
\end{equation}
where Proposition \ref{prop:jstsp}\ref{prop:jstspi} has been used in the last equality. This shows that \eqref{eq:consfwpjstsp} is satisfied.
\end{proof}

\begin{remark}\ \label{r:fw}
\begin{enumerate}
\item Eq. \eqref{eq:lindecfb} shows that 
$(x_n)_{n\in\NN}$ converges linearly to $\prox_{\ic+\kappa g}x$.
Although this equation provides an upper bound, it suggests
to choose $\lambda_n$ and $\gamma_n$ as large as possible (i.e.
$\lambda_n \equiv 1$ and $\gamma_n$ close to $2\kappa^{-1}\beta^{-1}$)
to optimize the convergence rate. This fact was confirmed by our simulations.
\item\label{r:fw1step} Proposition \ref{p:fwbc}\ref{c2:pfb} may appear as
a desirable property since Proposition \ref{prop:jstsp}\ref{prop:jstspi}
states that, when $\prox_{\kappa g} x \in C$, $\prox_{\ic+\kappa g}x$
takes a trivial form.
In this case, the convergence is indeed guaranteed in just one iteration 
by appropriately initializing the algorithm. Note however that $\prox_{\kappa g}x$ may not always be simple to compute, depending on the form
of $g$.
\item An alternative numerical method for the computation of
 $\prox_{\ic+\kappa g}x$ would consist of setting
$f_1 = \ic$ and $f_2 =  \frac{1}{2}\left\|. -x\right\|^{2}+\kappa g$
in the forward-backward algorithm, so yielding
\begin{equation*}
(\forall n\in \NN) \qquad x_{n+1} = x_n +
\lambda_n \big(P_C(x_n - \gamma_n ( \kappa \nabla g(x_n)+x_n-x)) -x_n\big)
\end{equation*}
with  $0 < \underline{\gamma} \le \overline{\gamma} < 2(\kappa\beta+1)^{-1}$.
It can be noticed that the forward-backward algorithm then reduces to a projected gradient algorithm {\rm \cite[Chap. 3., Sect. 3.3.2]{Bertsekas_DP_1997_book_Parallel_adcnm}\cite{Alber_YI_1998_mp_projected_otpsmfncoiahs}}, when $\lambda_n \equiv 1$. In our experiments, it was however
observed that the convergence of this algorithm is slower
than that in \eqref{eq:proxdifferentiable}, probably due to the fact
that $\prox_{\gamma_n f_1}$ is no longer strictly contractive for the second
choice of $f_1$.
\end{enumerate}
\end{remark}

\subsection{Douglas-Rachford approach}
\label{subsec:dr}
Let us relax the Lipschitz continuity assumption in Problem~\ref{pb:minimisation1} 
and turn our attention to the optimization problem:
\begin{problem}\label{pb:minimisation2} 
Let $g_1$ and $g_2$ be functions in $\Gamma_0(\HH)$  such that
$\operatorname{Argmin} g_1+g_2 \neq \emp$.
Assume that one of the following three conditions is satisfied: 
\begin{enumerate}
\item
\label{p:1ii}
$\dom g_2\cap\inte \dom g_1 \neq\emp$.\footnote{The interior (resp. relative interior) of a set $S$ of $\HH$ is designated by $\inte S$ (resp. $\rint S$).}
\item
\label{p:1i}
$\dom g_1\cap\inte \dom g_2 \neq\emp$.
\item \label{p:1iii}
$\HH$ is finite dimensional and $\rint \dom g_1 \cap\rint \dom g_2 \neq\emp$.
\end{enumerate}
\begin{equation*}
\text{Find}\quad \min_{z \in \HH} g_1(z) + g_2(z).
\label{eq:douglas-rachford}
\end{equation*}
\end{problem}
In the statement of the above problem, the notation differs
from that used in Problem~\ref{pb:minimisation1} to emphasize
the difference in the assumptions which have been adopted and 
facilitate the presentation of the algorithms subsequently presented
in Section \ref{sec:algorithm}.\\
The Douglas-Rachford algorithm,
proposed in \cite{Lions_PL_1979_jna_Splitting_aftsotno,Eckstein_J_2003_mp_douglas_otdrsmatppaftmmo}, provides an appealing numerical solution to Problem \ref{pb:minimisation2}, as described next.
\vspace*{0.5cm}
\subsubsection{Algorithm \cite[Eq.(19)]{Combettes_PL_2007_istsp_Douglas_rsatncvsr}}
Set $z_0 \in \HH$ and compute, for every $m\in \NN$,
\begin{equation}
\begin{cases}
z_{m+\frac{1}{2}} = \prox_{\gammat g_2} z_m + b_m\\
z_{m+1} = z_m + \lambdat_m \big(\prox_{\gammat g_1}(2z_{m+\frac{1}{2}} - z_m) + a_m - z_{m+\frac{1}{2}}\big)\\
\end{cases}
\label{eq:algDR}
\end{equation}
where $\kappa >0$, $\suite[m]{\lambdat_m}$ is a sequence of positive reals, and 
$\suite[m]{a_m}$ (resp. $\suite[m]{b_m}$) is a sequence of
 errors in $\HH$ allowed in the computation  of the proximity operator
of $\gammat g_1$ (resp. $\gammat g_2$).\\
Then, $(z_m)_{m\in \NN}$ converges weakly to $z \in \HH$ \cite[Corollary 5.2]{Combettes_PL_2004_o_Solving_mivconao} such that $\prox_{\gammat g_2}z$ is a solution to Problem~\ref{pb:minimisation2}, provided that: 

\begin{assumption}\hfill \label{a:gl2}
\begin{enumerate}
 \item \label{a:gl21} $(\forall m \in \NN)$ $\lambdat_m\in ]0,2[$ and
$\sum_{m\in\NN} \lambdat_m(2-\lambdat_m) = +\infty$.
 \item \label{a:gl22} $\sum_{m\in\NN} \lambdat_m(\norm{a_m}+\norm{b_m})<+\infty$.
\end{enumerate}
\end{assumption}
\vspace*{0.3cm}
An alternate convergence result is the following:
\begin{proposition}\label{prop:constrongDR}
Suppose that 
the assumptions
of Problem \ref{pb:minimisation2} hold. If $g_2$ is a strongly convex function, then the Douglas-Rachford algorithm in \eqref{eq:algDR} with $\inf_{m \in \NN}\tau_m > 0$,
$\sup_{m\in\NN} \tau_m \le 2$
and $a_m \equiv b_m \equiv 0$ is such that $(z_{m+1/2})_{m\in \NN}$ converges strongly  to the unique solution
to Problem  \ref{pb:minimisation2}.
\end{proposition}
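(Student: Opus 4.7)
The plan is to exploit the enhanced firm nonexpansiveness of $\prox_{\gammat g_2}$ from Proposition~\ref{prop:proxquad}\ref{p:proxquadb} to upgrade the classical Fej\'er estimate underlying the Douglas--Rachford iteration into a summable bound on $\|z_{m+\frac{1}{2}}-\widetilde z\|^{2}$, which will immediately give the announced strong convergence.

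First I would note that $g_1+g_2$ is strictly convex (being the sum of a convex and a strongly convex function), so Problem~\ref{pb:minimisation2} has a unique minimizer $\widetilde z$, and the usual Douglas--Rachford theory (under the qualification conditions in Problem~\ref{pb:minimisation2}) identifies $\widetilde z=\prox_{\gammat g_2}z$ for some fixed point $z\in\HH$ of $T=\frac12(\Id+R_{\gammat g_1}R_{\gammat g_2})$, with reflections $R_{\gammat g_i}=2\prox_{\gammat g_i}-\Id$. Writing $g_2=h+\frac{\vartheta}{2}\|\cdot\|^2$ with $h\in\Gamma_0(\HH)$, Proposition~\ref{prop:proxquad}\ref{p:proxquadb} applied to $\gammat g_2$ yields
\[
\scal{\prox_{\gammat g_2}u-\prox_{\gammat g_2}v}{u-v}\ge (1+\gammat\vartheta)\,\|\prox_{\gammat g_2}u-\prox_{\gammat g_2}v\|^2,
\]
and expanding $\|R_{\gammat g_2}u-R_{\gammat g_2}v\|^2=\|2(\prox_{\gammat g_2}u-\prox_{\gammat g_2}v)-(u-v)\|^2$ with this inequality produces the key reflection estimate
\[
\|R_{\gammat g_2}u-R_{\gammat g_2}v\|^2\le \|u-v\|^2-4\gammat\vartheta\,\|\prox_{\gammat g_2}u-\prox_{\gammat g_2}v\|^2.
\]

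Next I would rewrite \eqref{eq:algDR} (with $a_m\equiv b_m\equiv 0$) as $z_{m+1}=(1-\mu_m)z_m+\mu_m R_{\gammat g_1}R_{\gammat g_2}z_m$, where $\mu_m=\tau_m/2\in(0,1]$. Applying the identity $\|(1-\mu)a+\mu b\|^2=(1-\mu)\|a\|^2+\mu\|b\|^2-\mu(1-\mu)\|a-b\|^2$ with $a=z_m-z$ and $b=R_{\gammat g_1}R_{\gammat g_2}z_m-z$ (recall $R_{\gammat g_1}R_{\gammat g_2}z=z$ since $Tz=z$), then combining nonexpansiveness of $R_{\gammat g_1}$ with the reflection estimate above and discarding the nonpositive $-\mu_m(1-\mu_m)\|a-b\|^2$ term, I obtain
\[
\|z_{m+1}-z\|^2\le \|z_m-z\|^2 - 2\tau_m \gammat\vartheta\,\|z_{m+\frac{1}{2}}-\widetilde z\|^2.
\]
Telescoping this inequality and using $\inf_{m\in\NN}\tau_m>0$ then forces $\sum_{m\in\NN}\|z_{m+\frac{1}{2}}-\widetilde z\|^2<+\infty$, so in particular $z_{m+\frac{1}{2}}\to\widetilde z$ strongly.

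The main difficulty is precisely the passage from weak to strong convergence, which is exactly what the extra $\gammat\vartheta$-term in the reflection estimate delivers; this is the unique point at which the strong convexity of $g_2$ genuinely enters the argument. A minor subtlety occurs at the boundary $\tau_m=2$ (i.e.\ $\mu_m=1$), where the classical Fej\'er cross term degenerates, but since it enters with a favourable sign it can be harmlessly discarded, so the estimate remains valid throughout the prescribed range $\tau_m\in(0,2]$.
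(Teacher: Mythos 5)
Your proof is correct and takes essentially the same route as the paper's: both arguments hinge on Proposition~\ref{prop:proxquad}\ref{p:proxquadb} to inject a surplus term proportional to $\|z_{m+\frac{1}{2}}-\widetilde z\|^2$ into the Fej\'er inequality for the Douglas--Rachford iterates and then conclude by monotonicity/telescoping. You merely organize the algebra more cleanly, through the Krasnosel'skii--Mann form $z_{m+1}=(1-\tau_m/2)z_m+(\tau_m/2)\,\rprox_{\gammat g_1}\rprox_{\gammat g_2}z_m$ and a reflection estimate, rather than the paper's direct expansion of $\normCar{S_m\yy-S_m\yyy}$, and you land on the same final inequality as \eqref{strgCV14}.
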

\begin{proof}
Let the $\rprox$ operator be defined, for every $f\in \Gamma_0(\HH)$,  by 
\begin{equation}
\rprox_f = 2\prox_f - \Id.
\end{equation}
Let us rewrite the Douglas-Rachford iteration in \eqref{eq:algDR} with $a_m \equiv b_m \equiv 0$ 
as $z_{m+1} = S_m z_m$, where
\begin{equation}
S_m = \lambdat_m \prox_{\gammat g_1}(\rprox_{\gammat g_2})+ \Id - \lambdat_m \prox_{\gammat g_2}.
\label{eq:S}
\end{equation}
For all $(\yy,\yyy)\in\HH^2$, we have
\begin{equation}
\begin{split}
\normCar{&S_m\yy-S_m\yyy} = \lambdat_m^{2}\normCar{\prox_{\gammat g_1}(\rprox_{\gammat g_2}\yy) -\prox_{\gammat g_1}(\rprox_{\gammat g_2}\yyy)}\\ 
&+2 \lambdat_m \prodScal{\prox_{\gammat g_1}(\rprox_{\gammat g_2}\yy)-\prox_{\gammat g_1}(\rprox_{\gammat g_2}\yyy)}{\yy-\lambdat_m\prox_{\gammat g_2}\yy  -\yyy + \lambdat_m \prox_{\gammat g_2}\yyy}\\
& + \normCar{\yy-\lambdat_m\prox_{\gammat g_2}\yy  -\yyy + \lambdat_m \prox_{\gammat g_2}\yyy}.
\end{split}
\label{strgCV1}
\end{equation}
Since $\gammat g_1 \in \Gamma_0(\HH)$,  $\prox_{\gammat g_1}$ is firmly nonexpansive
\cite[Lemma 2.4]{Combettes_PL_2005_mms_Signal_rbpfbs}
and, the expression in \eqref{strgCV1} can be upper bounded as follows
\begin{equation*}
\begin{split}
&\normCar{S_m\yy-S_m\yyy}\\
& \leq  \lambdat_m^{2}\prodScal{\prox_{\gammat g_1}(\rprox_{\gammat g_2}\yy) -\prox_{\gammat g_1}(\rprox_{\gammat g_2}\yyy)}{\rprox_{\gammat g_2}\yy -
\rprox_{\gammat g_2}\yyy}\\
 & { +2 \lambdat_m \prodScal{\prox_{\gammat g_1}(\rprox_{\gammat g_2}\yy)-\prox_{\gammat g_1}(\rprox_{\gammat g_2}\yyy)}{\yy-\lambdat_m\prox_{\gammat g_2}\yy  -\yyy + \lambdat_m \prox_{\gammat g_2}\yyy}}\\
& + \normCar{\yy-\lambdat_m\prox_{\gammat g_2}\yy  -\yyy + \lambdat_m \prox_{\gammat g_2}\yyy}
\end{split}
\label{strgCV2}
\end{equation*}
which yields after simplifications:
\begin{equation*}
\begin{split}
\normCar{S_m\yy-S_m\yyy} \leq  \lambdat_m(2 - \lambdat_m)&\prodScal{\prox_{\gammat g_1}(\rprox_{\gammat g_2}\yy) -\prox_{\gammat g_1}(\rprox_{\gammat g_2}\yyy)}{\yy-\yyy}\\
& + \normCar{\yy-\lambdat_m\prox_{\gammat g_2}\yy  -\yyy + \lambdat_m \prox_{\gammat g_2}\yyy}.
\end{split}\label{strgCV3}
\end{equation*}
Using the definition of the operator $S_m$ in \eqref{eq:S},
we thus obtain, after some simple calculations,
\begin{multline}
\normCar{S_m\yy-S_m\yyy} \leq (2-\lambdat_m)\prodScal{S_m\yy-S_m\yyy}{\yy-\yyy} + (\lambdat_m-1)\normCar{\yy-\yyy} \\
- \lambdat_m ^{2}\big(\prodScal{ \prox_{\gammat g_2}\yy - \prox_{\gammat g_2}\yyy }{\yy-\yyy}- \normCar{\prox_{\gammat g_2}\yy - \prox_{\gammat g_2}\yyy}\big).
\label{eq:lasteqy1y2str}
\end{multline}
Let $\theta$ be the modulus of the strongly convex function $g_2$. Then 
$\gammat g_2$ is strongly convex with modulus $\gammat \theta$ and 
Proposition \ref{prop:proxquad}\ref{p:proxquadb} states that the following inequality holds:
\begin{equation*}
 \prodScal{ \prox_{\gammat g_2}\yy - \prox_{\gammat g_2}\yyy }{\yy-\yyy} \geq (\gammat \theta +1)\normCar{\prox_{\gammat g_2}\yy - \prox_{\gammat g_2}\yyy},
\end{equation*}
which combined with \eqref{eq:lasteqy1y2str} leads to
\begin{equation}
\begin{split}
\normCar{S_m\yy-S_m\yyy} + &\gammat \theta\lambdat_m ^{2}\normCar{\prox_{\gammat g_2}\yy - \prox_{\gammat g_2}\yyy} \\
&\leq (2-\lambdat_m)\prodScal{S_m\yy-S_m\yyy}{\yy-\yyy} + (\lambdat_m-1)\normCar{\yy-\yyy}.
\end{split}
\label{strgCV8}
\end{equation}

Now, let $\zz$ be the unique minimizer of $g_1+g_2$.
Hence, $\zz = \prox_{\gammat g_2} z$ where $z$ is a fixed point of $S_m$.
Consequently, by setting  $\yy = z_m$ and $\yyy = z$ in \eqref{strgCV8}, we deduce that
\begin{multline}
\normCar{z_{m+1}- z} + \gammat \theta\lambdat_m ^{2}\normCar{z_{m+\frac{1}{2}} - \zz} \\
 \leq (2-\lambdat_m)\prodScal{z_{m+1}- z}{z_m-z} + (\lambdat_m-1)\normCar{z_m-z}.
\label{eq:strgCV8bis}
\end{multline}
Using the fact that 
\[
2\prodScal{z_{m+1}-z}{z_m-z} =  \normCar{z_{m+1}-z} + \normCar{z_m-z} -\normCar{z_{m+1}-z_m}
\] 
\eqref{eq:strgCV8bis} can be rewritten as
\begin{equation}
\begin{split}
\lambdat_m \normCar{z_{m+1}- z} &+(2-\lambdat_m)\normCar{z_{m+1} - z_m} + 2\gammat \theta\lambdat_m ^{2}\normCar{z_{m+\frac{1}{2}} - \zz} \leq \lambdat_m \normCar{z_m-z}.
\end{split}
\label{strgCV12}
\end{equation}
Considering Assumption \ref{a:gl2}, 
$(2-\lambdat_m)$ $\normCar{z_{m+1} - z_m}$ is nonnegative and the left-hand side term of inequality \eqref{strgCV12} can be lower bounded, so yielding
\begin{equation*}
\begin{split}
\lambdat_m \normCar{z_{m+1}- z} + 2\gammat \theta\lambdat_m ^{2}\normCar{z_{m+\frac{1}{2}} - \zz} \leq \lambdat_m \normCar{z_m-z}.
\end{split}
\label{strgCV13}
\end{equation*}
Finally, by using the assumption that 
$\underline{\lambdat}=\inf_{m\in \NN} \lambdat_m > 0$, we obtain
\begin{equation}
\begin{split}
\normCar{z_{m+1}- z} + 2\gammat \theta\underline{\lambdat} \normCar{z_{m+\frac{1}{2}} - \zz} \leq \normCar{z_m-z}.
\end{split}
\label{strgCV14}
\end{equation}
This entails that $\normCar{z_{m+1}-z}\leq \normCar{z_m - z}$
and, the sequence $\suite{\|z_m-z\|}$ being decreasing, there exists $c \in \RPP$ such that $\lim_{m\rightarrow\pinf}\|z_m-z\|=c$.
In turn, from \eqref{strgCV14}, we conclude that $\lim_{m\rightarrow\pinf}z_{m+\frac{1}{2}} = \zz$, which shows the strong convergence of $(z_{m+1/2})_{m\in\NN}$
to the unique minimizer of $g_1+g_2$.
\end{proof}

It can be noticed that, although the convergence of the Douglas-Rachford
algorithm generally requires that $\tau_m < 2$, the strong convergence is obtained under the above assumptions, when
$\tau_m = 2$. The limit case of the Douglas-Rachford corresponding
to $\tau_m \equiv 2$
is known as the Peaceman-Rachford algorithm \cite{Peaceman_DW_1955_siam_numerical_tnsopaede,Combettes_PL_2004_o_Solving_mivconao}.

\subsubsection{Computation of $\prox_{\ic+ \gamma\lowercase{f}}$} \label{sec:icfDR}
Let $C$ be a nonempty closed convex set of $\HH$.
The Douglas-Rachford algorithm can be used to compute $\prox_{\ic+ \gamma f}$ where $f \in \Gamma_0(\HH)$ and $\gamma$ is a positive constant, 
using again the definition of the proximity operator:
\begin{equation}
(\forall x \in \HH)\quad \prox_{\ic +  \gamma f}x = \arg \min_{y\in \HH}\frac{1}{2}\left\|y-x\right\|^{2} + \ic(y)+  \gamma f(y).
\label{eq:prox_definition}
\end{equation}
The above minimization problem 
appears as a specialization of Problem \ref{pb:minimisation2} by setting
$g_1 = \gamma f$ and $g_2 = \frac{1}{2}\left\|\cdot-x\right\|^{2} + \ic$,
provided that one of the following three conditions holds:
\begin{assumption}\hfill \label{a:ddd}
\begin{enumerate}
\item \label{a:ddd1}$  C \cap \inte \dom f \neq \emp$.
\item \label{a:ddd2}$ \dom f  \cap \inte C \neq \emp$.
\item \label{a:ddd3}$\HH$ is finite dimensional and $\rint C  \cap  \rint\dom f \neq \emp$.
\end{enumerate}
\end{assumption}
Subsequently, we propose to use the 
Douglas-Rachford algorithm in \eqref{eq:algDR} with $a_m \equiv b_m \equiv 0$,
to compute the desired proximity operator.
Note that both $\prox_{\gammat g_1}$ and $\prox_{\gammat g_2}$ with $\gammat > 0$, 
have to be calculated to apply this algorithm. In our case, we have
\begin{eqnarray*}
\prox_{\gammat g_1} = \prox_{\gammat \gamma f}
\label{eq:pf1}
\end{eqnarray*}
and, similarly to \eqref{eq:defproxf1fw},
\begin{eqnarray*}
(\forall z \in \HH)\qquad
\prox_{\gammat g_2}z = P_C \Big(\frac{z+\gammat x}{1+\gammat}\Big).
\label{eq:pf2}
\end{eqnarray*}
The resulting Douglas-Rachford iterations read: for every $m\in \NN$,
\begin{equation}
\begin{cases}
\displaystyle z_{m+\frac{1}{2}} =  P_C \Big(\frac{z_m+\gammat x}{1+\gammat}\Big)\\
z_{m+1} = z_m + \lambdat_m 
\big(\prox_{\gammat \gamma f}(2z_{m+\frac{1}{2}} - z_m) - z_{m+\frac{1}{2}}\big).\label{eq:algDR2}
\end{cases}
\end{equation}
This algorithm enjoys the following properties:
\begin{proposition} \label{prop:DRn}
Suppose that one of Assumptions  \ref{a:ddd}\ref{a:ddd1}, \ref{a:ddd}\ref{a:ddd2}
or \ref{a:ddd}\ref{a:ddd3} holds.
Consider the algorithm in \eqref{eq:algDR2}
where $x\in \HH$,
$\inf_{m \in \NN}\tau_m>0$ and $\sup_{m \in \NN}\tau_m\le 2$.
Then,
\begin{enumerate}
\item \label{prop:DRni}
 $(z_{m+\frac{1}{2}})_{m\in \NN}$ converges strongly to $\prox_{\ic + \gamma f}x$;
\item \label{prop:DRni2}by setting $\kappa = 1$ and $z_0 = 2\prox_{\gamma f} x-x$, we get:
\begin{equation}
\prox_{\gamma f} x \in C \quad \Rightarrow\quad (\forall m \in \NN)\;\;z_{m+\frac{1}{2}} = \prox_{\ic +\gamma  f}x.
\label{eq:1stepDR}
\end{equation}
\end{enumerate}
\end{proposition}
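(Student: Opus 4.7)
The plan is to cast the algorithm~\eqref{eq:algDR2} as an instance of the Douglas-Rachford iteration~\eqref{eq:algDR} (with $a_m \equiv b_m \equiv 0$) applied to the splitting $g_1 = \gamma f$ and $g_2 = \frac{1}{2}\|\cdot - x\|^2 + \ic$ arising from the definition~\eqref{eq:prox_definition}. First I would verify that Problem~\ref{pb:minimisation2} is well-posed for this splitting: one of Assumptions~\ref{a:ddd}\ref{a:ddd1}--\ref{a:ddd3} gives directly the required interior/relative-interior condition (since $\dom g_2 = C$ has the same interior/relative interior as $C$), and existence of a minimizer of $g_1+g_2$ follows because this minimizer is exactly $\prox_{\ic+\gamma f}x$, which exists and is unique as the proximity operator of a function in $\Gamma_0(\HH)$.

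For \ref{prop:DRni}, the key observation is that $g_2 = \frac{1}{2}\|\cdot-x\|^2 + \ic$ is strongly convex with modulus $1$, thanks to the quadratic term. Together with $\inf_m \tau_m > 0$ and $\sup_m \tau_m \le 2$, all hypotheses of Proposition~\ref{prop:constrongDR} are met. Hence $(z_{m+\frac12})_{m\in\NN}$ converges strongly to the unique minimizer of $g_1+g_2$, which by \eqref{eq:prox_definition} is $\prox_{\ic+\gamma f}x$.

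For \ref{prop:DRni2}, the target simplifies by Proposition~\ref{prop:jstsp}\ref{prop:jstspi}: when $\prox_{\gamma f}x \in C$, we have $\prox_{\ic+\gamma f}x = \prox_{\gamma f}x$. I would then show that the initialization $z_0 = 2\prox_{\gamma f}x - x$, with $\gammat = 1$, renders $z_0$ a fixed point of the iteration. Indeed, a direct calculation gives
\begin{equation*}
z_{\frac12} = P_C\!\left(\tfrac{z_0 + x}{2}\right) = P_C\big(\prox_{\gamma f}x\big) = \prox_{\gamma f}x,
\end{equation*}
using $\prox_{\gamma f}x \in C$. Consequently $2z_{\frac12} - z_0 = x$, so
\begin{equation*}
\prox_{\gammat\gamma f}(2z_{\frac12} - z_0) = \prox_{\gamma f}x = z_{\frac12},
\end{equation*}
and the update in \eqref{eq:algDR2} yields $z_1 = z_0$. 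By induction, $z_m = z_0$ and $z_{m+\frac12} = \prox_{\gamma f}x = \prox_{\ic+\gamma f}x$ for every $m \in \NN$, establishing \eqref{eq:1stepDR}.

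I do not expect any serious obstacle: part \ref{prop:DRni} is a direct transcription of Proposition~\ref{prop:constrongDR} once strong convexity of $g_2$ is noted, and part \ref{prop:DRni2} reduces to a one-line fixed-point verification. The only minor care needed is to check that Assumption~\ref{a:ddd} indeed supplies one of the qualification conditions of Problem~\ref{pb:minimisation2} in the present splitting, which is immediate because $\inte \dom g_2 = \inte C$ (and similarly for relative interior in finite dimension).
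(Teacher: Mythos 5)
Your proposal is correct and follows essentially the same route as the paper: part \ref{prop:DRni} is obtained by noting that $g_2=\frac12\|\cdot-x\|^2+\ic$ is strongly convex with modulus $1$ and invoking Proposition~\ref{prop:constrongDR}, and part \ref{prop:DRni2} is the same one-step fixed-point verification ($z_{\frac12}=\prox_{\gamma f}x$, $z_1=z_0$, then induction) combined with Proposition~\ref{prop:jstsp}\ref{prop:jstspi}. Your added checks of the qualification conditions and of existence of the minimizer are harmless elaborations of what the paper leaves implicit.
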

\begin{proof}
\noindent\ref{prop:DRni}: As $g_2$ is strongly convex with modulus 1, \ref{prop:DRni} holds by invoking Proposition \ref{prop:constrongDR}.\\
\ref{prop:DRni2}: Set $\gammat=1$, $z_0 = 2\prox_{\gamma f}x-x$ with $\prox_{\gamma f}x\in C$. By considering the first iteration of the Douglas-Rachford algorithm ($m=0$), we have $z_{\frac{1}{2}} = \prox_{\gamma f}x$ and $z_1 = z_0$. 
So, by induction, $(\forall m \in \NN)$ 
$ z_{m+\frac{1}{2}} = \prox_{\gamma f}x$, which is also equal to
$\prox_{\ic + \gamma f}x$ according to Proposition~\ref{prop:jstsp}\ref{prop:jstspi}.
\end{proof}
\begin{remark}\ 
\begin{enumerate}
\item 
As already observed in Remark \ref{r:fw}\ref{r:fw1step}, \eqref{eq:1stepDR}
is a desirable property.
It shows that the proposed algorithm converges in one iteration when $\prox_{\gamma f} x \in C$, which appears quite consistent in the light of
Proposition \ref{prop:jstsp}\ref{prop:jstspi}.
\item  Other choices can be envisaged for $g_1$ and $g_2$, namely
\begin{enumerate}
\item $g_1 = \frac{1}{2}\left\|\cdot-x\right\|^{2} + \ic$ and $g_2 = \gamma f$
\item $g_1 =\frac{1}{2}\left\|\cdot-x\right\|^{2}+\gamma f$ and $g_2=\ic$
\item $g_1 =\ic$ and $g_2=\frac{1}{2}\left\|\cdot-x\right\|^{2}+\gamma f$.
\end{enumerate}
Nevertheless, the strong convergence of $(z_{m+1/2})_{m\in\NN}$ in virtue of Proposition~\ref{prop:constrongDR}
is only guaranteed in the third case, whereas Property~\eqref{eq:1stepDR} holds only in the first case (when $\kappa = 1$ and $z_0 = x$). The second case was investigated in \cite{Dupe_FX_2008_ip_proximal_ifdpniusr}, where the good numerical behaviour of the resulting algorithm was demonstrated.
\end{enumerate}
\end{remark}

\subsection{Discussion}
Both Algorithms \eqref{eq:proxdifferentiable} and \eqref{eq:algDR2} allow us to determine the proximity operator of the sum of the indicator function of a closed convex set and a function in $\Gamma_0(\HH)$. The main difference between the two methods is that, in the former one, the convex function needs to be differentiable with a Lipschitz-continuous gradient, whereas the latter requires that the proximity operator of the convex function is easy to compute. In addition, the forward-backward algorithm converges linearly, while we were only able to prove the strong convergence of the Douglas-Rachford algorithm.
As we have shown also, the two algorithms are consistent with Proposition~\ref{prop:jstsp}\ref{prop:jstspi}.

\section{Proposed algorithms to minimize $f+g+\ic$} \label{sec:algorithm}
We have presented two approaches to minimize the sum of two functions in $\Gamma_0(\HH)$. We have also seen that these methods can be employed
to compute the proximity operator of the sum of the indicator function of a closed convex set $C$ and a function in $\Gamma_0(\HH)$.

We now come back to the more general form of Problem \ref{pb:minimisation},
for which we will propose two solutions. Both of them correspond to a combination of the forward-backward algorithm and the Douglas-Rachford one.

\subsection{First method: insertion of a forward-backward step in the Dou\-glas-Rachford algorithm} 
\label{sec:DR(FB)}
We propose to apply the Douglas-Rachford algorithm as described in Section \ref{subsec:dr}, when $g_1 = f$ and $g_2= \ic+ g$. If we refer to \eqref{eq:algDR}, we need to determine $\prox_{\kappa g_1} = \prox_{\kappa f}$ and $\prox_{\kappa g_2} = \prox_{\ic+\kappa g}$, where $\kappa > 0$. The main difficulty lies in the computation of the second proximity operator.
As proposed in Section \ref{sss:picgfb},
we can use a forward-backward algorithm to achieve this goal. 
The resulting algorithm is:

\begin{algorithm}\hfill \label{algo:main1}
\begin{itemize}
\item[\Pisymbol{pzd}{192}] Set $\underline{\gamma}\in ]0,2\gammat^{-1} \beta^{-1}[$,
$\underline{\lambda} \in ]0,1]$ and $\kappa \in \RPP$.
Choose $(\tau_m)_{m\in \NN}$ satisfying Assumption \ref{a:gl2}\ref{a:gl21}.
\item[\Pisymbol{pzd}{193}] Set $m=0$, $z_0 = z_{-1/2} \in C$.
\item[\Pisymbol{pzd}{194}] Set $x_{m,0} = z_{m-1/2}$.
\item[\Pisymbol{pzd}{195}] For $n = 0,\ldots,N_m-1$
\begin{itemize}
\item[a)] Choose $\gamma_{m,n}\in [\underline{\gamma},2\gammat^{-1} \beta^{-1}[$
and $\lambda_{m,n} \in [\underline{\lambda},1]$.
\item[b)] Compute
\[
 x_{m,n+1} = x_{m,n} + \lambda_{m,n} \left(
P_C \Big( \frac{x_{m,n} - \gamma_{m,n} (\gammat \nabla g(x_{m,n})-z_m)}{1+\gamma_{m,n}}\Big) - x_{m,n}\right).
\]
\end{itemize}
\item[\Pisymbol{pzd}{196}] Set $z_{m+\frac{1}{2}} = x_{m,N_m}$.
\item[\Pisymbol{pzd}{197}] Set $z_{m+1} = z_m + \lambdat_m \big(\prox_{\kappa f}(2z_{m +\frac{1}{2}}-z_m) - z_{m+\frac{1}{2}} \big)$.
\item[\Pisymbol{pzd}{198}] Increment $m$ $(m \leftarrow m+1)$
and goto \Pisymbol{pzd}{194}.
\end{itemize}  
\end{algorithm}

Step \Pisymbol{pzd}{192} allows us to set the algorithm parameters and
Step \Pisymbol{pzd}{193} corresponds to the initialization of the algorithm.
At iteration $m\ge 0$,
Step \Pisymbol{pzd}{195} consists of $N_m \ge 1$ iterations
of the forward-backward part of the algorithm, where possibly varying step-sizes $(\gamma_{m,n})_n$ and relaxation parameters $(\lambda_{m,n})_n$ are used.
Finally Steps \Pisymbol{pzd}{196} and \Pisymbol{pzd}{197} correspond to the Douglas-Rachford iteration. Here, the error term $a_m$ 
in the computation of $\prox_{\kappa f}$ is assumed to be equal to zero
but, due to the finite
number of iterations $N_m$ performed in Step \Pisymbol{pzd}{195}, an error
$b_m = z_{m+1/2} - \prox_{\ic+\kappa g}z_m$ may be introduced
in Step \Pisymbol{pzd}{195}.

It can be noticed that the forward-backward algorithm has not been initialized
in Step \Pisymbol{pzd}{194}
as suggested by Proposition \ref{p:fwbc}\ref{c2:pfb}. Indeed, as already mentioned, the computation of $\prox_{\kappa g} z_m$ would be generally costly. Furthermore, the initialization in Step \Pisymbol{pzd}{194} is useful to guarantee the following properties:
\begin{proposition}\label{p:convfwDR}
Suppose that Problem \ref{pb:minimisation} has a solution and
that one of Assumptions \ref{a:ddd}\ref{a:ddd1}, \ref{a:ddd}\ref{a:ddd2} or
\ref{a:ddd}\ref{a:ddd3} holds.
\begin{enumerate}
\item\label{p:iconvfwDR} Let $\xi > 0$ and $\rho$ be given by \eqref{eq:defrho}.
If $\inf g(C) > -\infty$ and, for every $m\in \NN$, the positive integer $N_m$ is chosen such that
\begin{subequations}
\begin{align}
&\rho^{N_m} \sqrt{2\kappa} \big(g(z_0)-\inf g(C)\big)^{1/2} \le \xi
&\mbox{if $m=0$}
\label{eq:condconvfwDR0}\\
&\rho^{N_m-1} \big(1+\xi^{-1}\rho^{1-m} \|z_m -z_{m-1}\|\big) \le 1
&\mbox{if $m > 0$}
\label{eq:condconvfwDRm}
\end{align}
\end{subequations}
then, $(z_m)_{m\in \NN}$ converges weakly to $z \in \HH$ such that $\prox_{\iota_C+\gammat g}z$ is solution to Problem \ref{pb:minimisation}.
\item \label{p:iiconvfwDR} For every $m\in \NN$, $(x_{m,n})_{0\le n \le N_m}$ (and thus, $z_{m+1/2}$)
lies in $C$.
\end{enumerate}
\end{proposition}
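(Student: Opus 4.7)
I would begin with part \ref{p:iiconvfwDR}, since it is short and is also used implicitly in part \ref{p:iconvfwDR}. The forward--backward update in Step \Pisymbol{pzd}{195}(b) has the form
\[
x_{m,n+1}=(1-\lambda_{m,n})\,x_{m,n}+\lambda_{m,n}\,P_C\!\Big(\tfrac{x_{m,n}-\gamma_{m,n}(\kappa\nabla g(x_{m,n})-z_m)}{1+\gamma_{m,n}}\Big),
\]
which, since $\lambda_{m,n}\in[0,1]$ and $C$ is convex, is a convex combination of $x_{m,n}$ and a point of $C$. So if $x_{m,0}\in C$ then $x_{m,n}\in C$ for every $n$, and hence $z_{m+1/2}=x_{m,N_m}\in C$. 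An outer induction on $m$ then closes the argument: $x_{0,0}=z_{-1/2}\in C$ by initialization, and for $m\ge 1$ we have $x_{m,0}=z_{m-1/2}\in C$ by the induction hypothesis.

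For part \ref{p:iconvfwDR}, the plan is to view Algorithm \ref{algo:main1} as an \emph{inexact} Douglas--Rachford iteration of the form \eqref{eq:algDR} with $g_1=f$, $g_2=\iota_C+\kappa g$, $a_m\equiv 0$, and $b_m=z_{m+\frac12}-\prox_{\iota_C+\kappa g}z_m$. The assumptions of Problem \ref{pb:minimisation2} are met in view of Assumption~\ref{a:ddd}, and Assumption \ref{a:gl2}\ref{a:gl21} holds by hypothesis. Thus the weak convergence of $(z_m)_{m\in\NN}$ to some $z\in\HH$ such that $\prox_{\iota_C+\kappa g}z$ solves Problem~\ref{pb:minimisation} will follow from the cited convergence theory once we verify the summability condition
\[
\sum_{m\in\NN}\|b_m\|<+\infty.
\]

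The core of the argument is then the following quantitative bound on $\|b_m\|$. Since $x_{m,0}=z_{m-1/2}$, Proposition~\ref{p:fwbc}\ref{c1:pfb} (applied to the inner iteration that computes $\prox_{\iota_C+\kappa g}z_m$) gives
\[
\|b_m\|\le \rho^{N_m}\,\|z_{m-1/2}-\prox_{\iota_C+\kappa g}z_m\|.
\]
For $m=0$ we use $z_{-1/2}=z_0\in C$: writing $y^\star=\prox_{\iota_C+\kappa g}z_0$ and using $z_0\in C$ as a feasible competitor in the defining minimization yields $\tfrac12\|y^\star-z_0\|^2\le \kappa\big(g(z_0)-g(y^\star)\big)\le \kappa\big(g(z_0)-\inf g(C)\big)$, so by \eqref{eq:condconvfwDR0} we get $\|b_0\|\le\xi$. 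For $m\ge 1$ we split
\[
z_{m-1/2}-\prox_{\iota_C+\kappa g}z_m=b_{m-1}+\big(\prox_{\iota_C+\kappa g}z_{m-1}-\prox_{\iota_C+\kappa g}z_m\big),
\]
and invoke nonexpansiveness of $\prox_{\iota_C+\kappa g}$ to obtain the recursion $\|b_m\|\le\rho^{N_m}\big(\|b_{m-1}\|+\|z_m-z_{m-1}\|\big)$. I would then prove by induction that $\|b_m\|\le\xi\rho^m$ for every $m$: the inductive step reduces exactly to the inequality $\rho^{N_m-1}\big(1+\xi^{-1}\rho^{1-m}\|z_m-z_{m-1}\|\big)\le 1$, i.e.\ condition \eqref{eq:condconvfwDRm}. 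The geometric bound gives $\sum_m\|b_m\|\le \xi/(1-\rho)<\pinf$, completing the verification of the inexactness condition and hence the convergence claim.

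The main subtle point to be careful about is the interplay between the inexact error bound and the iterates themselves: condition \eqref{eq:condconvfwDRm} depends on $\|z_m-z_{m-1}\|$, so it is an implicit condition that must be checked online at each step by choosing $N_m$ large enough, rather than being an a priori rule. Once the inductive invariant $\|b_m\|\le\xi\rho^m$ is set up, however, the rest is mechanical, and the hypothesis $\inf g(C)>-\infty$ is needed solely to make the initialization bound on $\|b_0\|$ meaningful.
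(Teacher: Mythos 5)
Your argument is correct and coincides with the paper's own proof in every essential respect: part (ii) by the same convex-combination induction, and part (i) by casting Algorithm~\ref{algo:main1} as an inexact Douglas--Rachford iteration with $a_m\equiv 0$ and $b_m=z_{m+\frac12}-\prox_{\iota_C+\kappa g}z_m$, bounding $\|b_0\|$ via $z_0\in C$ as a competitor in the prox minimization, and establishing the invariant $\|b_m\|\le\xi\rho^m$ by induction through the same nonexpansivity splitting. The only nit is notational: to match $\prox_{\kappa g_2}=\prox_{\iota_C+\kappa g}$ in \eqref{eq:algDR} one should take $g_2=\iota_C+g$ rather than $\iota_C+\kappa g$.
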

\begin{proof}
\noindent\ref{p:iconvfwDR}:
According to Proposition \ref{p:fwbc}\ref{c1:pfb},
for every $m\in \NN$,
\begin{equation*}
(\forall n \in \{0,\ldots,N_m\})\qquad
\|x_{m,n}- \prox_{\ic + \kappa g} z_m \|
\le \rho^n \|x_{m,0}- \prox_{\ic + \kappa g} z_m \|
\end{equation*}
and, consequently
\begin{equation}
\|b_m \| = \|z_{m+1/2} -\prox_{\ic + \kappa g} z_m\|
\le \rho^{N_m}  \|z_{m-1/2} -\prox_{\ic + \kappa g} z_m\|.
\label{eq:upboundbm}
\end{equation}
Let us next show by induction that Conditions \eqref{eq:condconvfwDR0}
and \eqref{eq:condconvfwDRm} allow us to guarantee
that
\begin{equation}
\|b_m \| \le \rho^m \xi.
\label{eq:geobm}
\end{equation}
\begin{itemize}
\item If $m = 0$, we deduce from \eqref{eq:upboundbm} that
\begin{equation}
\|b_0 \| \le \rho^{N_0}  \|z_0 -\prox_{\ic + \kappa g} z_0\|.
\label{eq:upboundb0}
\end{equation}
From the definition of the proximity operator, we have
\begin{align*}
(\forall x \in C)\qquad
&\frac{1}{2} \|z_0-x\|^2 + \kappa\, g(x)\nonumber\\
&\ge \frac{1}{2} \| z_0-\prox_{\ic + \kappa g} z_0\|^2 + 
\kappa \,g\big(\prox_{\ic + \kappa g} z_0\big)\nonumber\\
& \ge \frac{1}{2} \| z_0-\prox_{\ic + \kappa g} z_0 \|^2 + \kappa \,\inf g(C)
\end{align*}
and, since $z_0 \in C$,
\begin{equation*}
\| z_0 - \prox_{\ic + \kappa g} z_0 \|^2
\le 2\kappa \big(g(z_0)-\inf g(C)\big).
\end{equation*}
By combining the latter inequality with \eqref{eq:upboundb0}
and \eqref{eq:condconvfwDR0}, we conclude that $\|b_0\| \le \xi$.
\item Now, let us show that \eqref{eq:geobm} holds for $m>0$, by assuming that
$\|b_{m-1}\| \le \rho^{m-1} \xi$. Using \eqref{eq:upboundbm}, we have
\begin{align*}
\|b_m\| 
& \le \rho^{N_m}  \big(\|z_{m-1/2} -\prox_{\ic + \kappa g} z_{m-1}
+\prox_{\ic + \kappa g} z_{m-1}-\prox_{\ic + \kappa g} z_m\|\big)\nonumber\\
&  \le \rho^{N_m}  \big(\|b_{m-1}\| + \|\prox_{\ic + \kappa g} z_{m-1}-\prox_{\ic + \kappa g} z_m\|\big)
\nonumber\\
& \le \rho^{N_m}  \big(\|b_{m-1}\| + \|z_{m-1}-z_{m}\|\big)
\end{align*}
where the nonexpansivity of $\prox_{\ic+\kappa g}$ has been used in the last
inequality.
From the induction assumption, we deduce that
\begin{equation*}
\|b_m\| \le \rho^{N_m} (\rho^{m-1} \xi + \|z_{m-1}-z_{m}\|\big)
\end{equation*}
which, according to \eqref{eq:condconvfwDRm}, leads to \eqref{eq:geobm}.
\end{itemize}
Then, \eqref{eq:geobm} allows us to claim that Assumption \eqref{a:gl2}\ref{a:gl22}
is satisfied since
\begin{equation*}
\sum_{m\in \NN} \tau_m (\|a_m\|+ \|b_m\|) \le
2 \xi (1-\rho)^{-1}.
\end{equation*}
By further noticing that Assumption \ref{a:ddd}
is equivalent to \ref{a:ddd1} $( \dom(\ic+g) \cap \inte \dom f \neq \emp)$,
\ref{a:ddd2} $\left(\dom f \cap \inte \dom(\ic+g) \neq \emp\right)$
or, \ref{a:ddd3} $\HH$ is finite dimensional and
 $(\rint \dom f \cap \rint\dom(\ic+g) \neq \emp)$,
the conditions for the weak convergence of the Douglas-Rachford algorithm
are therefore fulfilled.\\
\ref{p:iiconvfwDR}: The property can be proved by induction by noticing that
$x_{0,0} = z_{-1/2} \in C$ and that $x_{m,n+1}$ is a convex combination
of $x_{m,n}$ and the projection onto $C$ of an element of $\HH$.\end{proof}

Eqs. \eqref{eq:condconvfwDR0} and \eqref{eq:condconvfwDRm} constitute more a theoretical
guaranty for the convergence of the proposed algorithm than a practical guideline for the choice of $N_m$. In our numerical experiments, these conditions were indeed observed to provide overpessimistic values
of the number of forward-backward iterations to be applied in
Step \Pisymbol{pzd}{195}.
 
As a consequence of Proposition \ref{p:convfwDR}\ref{p:iiconvfwDR},
in Step \Pisymbol{pzd}{195}b), the gradient of $g$ is only evaluated on $C$. 
This means that the assumption of Lipschitz-continuity on the gradient of $g$ is only required on $C$ and therefore, the algorithm can be applied to the following more general setting:
\begin{problem} \label{prob:minimisationgen}
Let $C$ be a nonempty closed convex subset of $\HH$. Let $f$ and $g$ be in $\Gamma_{0}(\HH)$, where $g$ is differentiable on $C$ with a $\beta$-Lipschitz continuous gradient for some $\beta\in\RPP$.\footnote{That is there exists an open set containing $C$ on which $g$ is differentiable with a $\beta$-Lipschitz continuous gradient.}
\begin{equation*}
\text{Find}\qquad \min_{x\in C} f(x)+g(x).
\end{equation*}
\end{problem}
Note that, in the latter problem, the function $g$ does need to be finite.

\subsection{Second method: insertion of a Douglas-Rachford step in the for\-ward-backward algorithm} \label{sec:FB(DR)}
For this method, a different association between the functions involved in Problem \ref{pb:minimisation} is considered by setting $f_1 = \ic+ f$ and $f_2= g$. Since $f_2$ has then a $\beta$-Lipschitz continuous gradient, we can apply the forward-backward algorithm presented in Section \ref{se:fwalgo}. This requires however to compute $\prox_{\gamma_n f_1} = \prox_{\ic+\gamma_n f}$, which can be performed with Douglas-Rachford iterations.

Let us summarize the complete form of the second algorithm we propose to solve Problem \ref{pb:minimisation}. 
\begin{algorithm}\hfill \label{algo:main}
\begin{itemize}
\item[\Pisymbol{pzd}{192}] Choose sequences $(\gamma_n)_{n\in \NN}$ and $(\lambda_n)_{n\in\NN}$ satisfying Assumptions \ref{a:gl}\ref{a:gli} and \ref{a:glii}.
Set $\underline{\tau} \in ]0,2]$.
\item[\Pisymbol{pzd}{193}] Set $n=0$, $x_0 \in C$.
\item[\Pisymbol{pzd}{194}] Set $x_n' = x_n - \gamma_n \nabla g(x_n)$.
\item[\Pisymbol{pzd}{195}] Set $z_{n,0} = 2 \prox_{\gamma_n f} x_n'-x_n'$.
\item[\Pisymbol{pzd}{196}] For $m = 0,\ldots,M_n-1$
\begin{itemize}
\item[a)] Compute $\displaystyle z_{n,m+\frac{1}{2}} = P_C\Big(\frac{z_{n,m} +x_n'}{2}\Big)$.
\item[b)] Choose $\tau_{n,m} \in [\underline{\tau},2]$.
\item[c)] Compute $z_{n,m+1} = z_{n,m} +\tau_{n,m}\big(\prox_{\gamma_n f}(2z_{n,m+\frac{1}{2}} - z_{n,m}) -z_{n,m+\frac{1}{2}}\big)$.
\item[d)] If $z_{n,m+1} = z_{n,m}$, then goto \Pisymbol{pzd}{197}.
\end{itemize}
\item[\Pisymbol{pzd}{197}] Set $x_{n+1} = x_n + \lambda_n \big(z_{n,m+\frac{1}{2}}-x_n\big)$.
\item[\Pisymbol{pzd}{198}] Increment $n$ $(n\leftarrow n+1)$ and goto \Pisymbol{pzd}{194}.
\end{itemize}  
\end{algorithm}

We see that Step \Pisymbol{pzd}{196} consists of at most $M_n \ge 1$ iterations of the Douglas-Rachford algorithm described in Section \ref{sec:icfDR},
which is initialized in accordance with Proposition 
\ref{prop:DRn}\ref{prop:DRni2}.
Steps \Pisymbol{pzd}{194} and \Pisymbol{pzd}{197} correspond to a forward-backward iteration. Let $m_n < M_n$ be the iteration number where the Douglas-Rachford algorithm stops. The error terms involved in Step \Pisymbol{pzd}{197}
are $a_n = z_{n,m_n+\frac{1}{2}}-\prox_{\ic+\gamma_n f} x_n$ and $b_n=0$.
The properties of the algorithm are then the following:
\begin{proposition}\label{p:convDRfw}
Suppose that Problem \ref{pb:minimisation} has a solution and one of the
Assumptions \ref{a:ddd}\ref{a:ddd1}, \ref{a:ddd}\ref{a:ddd2} or
\ref{a:ddd}\ref{a:ddd3} holds.
\begin{enumerate}
\item \label{p:convDRfwi} There exists a sequence of positive integers $(\overline{M}_n)_{n\in \NN}$
such that, if $(\forall n \in \NN)$ $M_n \ge \overline{M}_n$
then, $(x_n)_{n\in \NN}$ converges weakly to a solution to Problem \ref{pb:minimisation}.
\item \label{p:convDRfwii} The sequence $(x_n)_{n\in \NN}$ lies in $C$.
\end{enumerate}
\end{proposition}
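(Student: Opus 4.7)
The plan is to interpret Algorithm \ref{algo:main} as an inexact forward-backward iteration of the form \eqref{eq:fb_algo} applied to Problem \ref{pb:minimisation1} with
\[
f_1 = \ic + f \in \Gamma_0(\HH), \qquad f_2 = g,
\]
and then invoke the convergence result recalled in Section \ref{se:fwalgo}. With this identification, Steps \Pisymbol{pzd}{194}, \Pisymbol{pzd}{195}--\Pisymbol{pzd}{196}, and \Pisymbol{pzd}{197} collectively perform
\[
x_{n+1} = x_n + \lambda_n \big( \prox_{\gamma_n f_1}(x_n - \gamma_n \nabla f_2(x_n)) + a_n - x_n \big),
\]
where the inner Douglas-Rachford loop approximates $\prox_{\ic+\gamma_n f}x_n'$ in the sense of Proposition \ref{prop:DRn}\ref{prop:DRni2} (which justifies the initialization $z_{n,0}= 2\prox_{\gamma_n f}x_n' - x_n'$), so that
\[
a_n = z_{n,m_n+\frac{1}{2}} - \prox_{\ic+\gamma_n f}x_n', \qquad b_n = 0.
\]
Note that $f_1 \in \Gamma_0(\HH)$ (since $C \cap \dom f \neq \emp$ under any of the Assumptions \ref{a:ddd}), and that $\operatorname{Argmin}(f_1+f_2) \neq \emp$ because Problem \ref{pb:minimisation} has a solution.

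For part \ref{p:convDRfwi}, I would verify Assumption \ref{a:gl}. Conditions \ref{a:gli} and \ref{a:glii} are imposed by Step \Pisymbol{pzd}{192}, and $\sum_n \|b_n\| = 0$. It therefore suffices to force $\sum_n \|a_n\| < +\infty$. For each $n$, the current outer data $x_n' = x_n - \gamma_n \nabla g(x_n)$ determine an inner problem to which Proposition \ref{prop:DRn}\ref{prop:DRni} applies (with $x$ replaced by $x_n'$ and $\gamma$ by $\gamma_n$): under Assumption \ref{a:ddd}, the sequence $(z_{n,m+\frac{1}{2}})_{m\in\NN}$ converges strongly to $\prox_{\ic+\gamma_n f}x_n'$. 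Hence there exists an integer $\overline{M}_n$ (constructed adaptively along the outer iterations and depending on $x_n'$) such that any stopping index $m_n$ with $\overline{M}_n \le m_n < M_n$ guarantees $\|a_n\| \le 2^{-n}$; the early-termination rule in Step \Pisymbol{pzd}{196}d) only anticipates convergence and thus never degrades this bound. Then $\sum_n \|a_n\| < +\infty$, Assumption \ref{a:gl} is fully satisfied, and the convergence theorem of \cite{Combettes_PL_2005_mms_Signal_rbpfbs} yields the weak convergence of $(x_n)_{n\in\NN}$ to a solution of Problem \ref{pb:minimisation1}, hence of Problem \ref{pb:minimisation}.

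For part \ref{p:convDRfwii}, I proceed by induction. We have $x_0 \in C$ by Step \Pisymbol{pzd}{193}. Assume $x_n \in C$. In Step \Pisymbol{pzd}{196}a), $z_{n,m+\frac{1}{2}}$ is produced by a projection onto $C$, so $z_{n,m_n+\frac{1}{2}} \in C$. Since $\lambda_n \in [\underline{\lambda},1] \subset [0,1]$, Step \Pisymbol{pzd}{197} writes $x_{n+1}$ as the convex combination $(1-\lambda_n)x_n + \lambda_n z_{n,m_n+\frac{1}{2}}$ of two points of $C$, hence $x_{n+1} \in C$ by convexity.

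The main obstacle is the handling of the inner errors $\|a_n\|$: Proposition \ref{prop:constrongDR} only delivers strong convergence without a quantitative rate, so $\overline{M}_n$ cannot be prescribed a priori by a closed-form expression as in Proposition \ref{p:convfwDR}. The resolution is to define $\overline{M}_n$ adaptively during execution so as to enforce the summable bound $\|a_n\| \le 2^{-n}$, which is enough to trigger Assumption \ref{a:gl}\ref{a:gliii} and thereby the forward-backward convergence theorem.
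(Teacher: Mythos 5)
Your proposal is correct and follows essentially the same route as the paper's own proof: both parts identify the scheme as an inexact forward--backward iteration with $f_1=\ic+f$, $f_2=g$, use the strong convergence of the inner Douglas--Rachford loop (Proposition \ref{prop:DRn}\ref{prop:DRni}) to choose $\overline{M}_n$ adaptively so that $\|a_n\|$ is summable (the paper uses $\rho^n$ where you use $2^{-n}$), and prove \ref{p:convDRfwii} by the same convex-combination induction. The only point worth tightening is your remark that early termination ``never degrades the bound'': the paper justifies this by observing that Step \Pisymbol{pzd}{196}d) fires only when $z_{n,m_n}$ is a fixed point of the inner recursion, in which case $z_{n,m_n+\frac{1}{2}}=\prox_{\ic+\gamma_n f}x_n'$ exactly, so $a_n=0$.
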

\begin{proof}
\noindent\ref{p:convDRfwi}:
Set $\rho \in ]0,1[$.
Let $n\in \NN$ and 
$(z_{n,m})_{m\in \NN}$ be defined by iterating Steps~\Pisymbol{pzd}{196}a), b)
and c). By invoking Proposition \ref{prop:DRn}\ref{prop:DRni}, we know that
$(z_{n,m+\frac{1}{2}})_{m\in \NN}$ converges strongly to $\prox_{\ic+\gamma_n f} x_n'$.
This implies that there exists $\overline{M}_n\ge 1$ such that
\begin{equation*}
(\forall m \in \NN)\qquad m \ge \overline{M}_n -1 \quad\Rightarrow\quad 
\|z_{n,m+\frac{1}{2}} - \prox_{\ic+\gamma_n f} x_n'\|
\le \rho^n.
\end{equation*}
If $M_n \ge \overline{M}_n$, we deduce that 
\begin{equation*}
\|a_n\|= 
\|z_{n,m_n+\frac{1}{2}} - \prox_{\ic+\gamma_n f} x_n'\|
\le \rho^n
\end{equation*}
since either $m_n = M_n-1$ or the algorithm stops in Step \Pisymbol{pzd}{196}d)
(in which case $z_{n,m_n}$ is a fixed point of the recursion in Step
\Pisymbol{pzd}{196}c) and
$z_{n,m_n+\frac{1}{2}} = \prox_{\ic+\gamma_n f} x_n'$).
We therefore have $\sum_{n\in \NN} \|a_n\| < \pinf$ and the conditions
for the weak convergence of the forward-backward algorithm are fulfilled.\\
\ref{p:convDRfwii}: We have chosen $x_0$ in $C$. In addition, $(\forall n \in \NN)$
$(z_{n,m+\frac{1}{2}})_m$ lies in $C$ and $x_{n+1}$ is convex combination
of $x_n$ and $z_{n,m+\frac{1}{2}}$. Hence, it is easily shown by induction
that $(\forall n \ge 1)$ $x_n \in C$.
\end{proof}

Proposition \ref{p:convDRfw}\ref{p:convDRfwi} guarantees
that, by choosing $M_n$ large enough, the algorithm allows us to
solve Problem~\ref{pb:minimisation}.
Although this result may appear somehow imprecise regarding the practical choice of 
$M_n$, it was observed in our simulations
that small values of $M_n$ are sufficient to ensure the convergence.

In addition, as a direct consequence of  Proposition \ref{p:convDRfw}\ref{p:convDRfwii}, in Step \Pisymbol{pzd}{194}, the gradient of $g$ is only evaluated on $C$.
This means that, similarly to Algorithm \ref{algo:main1}, this algorithm
is able to solve Problem \ref{prob:minimisationgen}. In the next section, we will show
that a number of image restoration problems can be formulated as Problem \ref{prob:minimisationgen}.

\section{Application to a class of image restoration problems}\label{sec:application}

\subsection{Context} \label{se:applicont}
We aim at restoring an image $\overline{y}$ in a real separable Hilbert space $\GG$ 
from a degraded observation $z\in \GG$. Here, digital images of size $N_1\times N_2$ are considered and thus $\GG = \RR^{N}$ with $N = N_1 N_2$. 
Let $T$ be a linear operator from $\GG$ to $\GG$ modelling
a linear degradation process, e.g. a convolutive blur.
The image 
$\overline{u} = T\overline{y}$ (resp. $z = (z^{(i)})_{1 \le i \le N}$) is a realization of a real-valued random vector $\overline{U}=(\overline{U}^{(i)})_{1\le i \le N}$ (resp. $Z
= (Z^{(i)})_{1\le i \le N}$). The image $\overline{U}$ is contaminated by noise.
Conditionally to $\overline{U} = (u^{(i)})_{1 \le i \le N} \in \GG$,
the random vector $Z$ is assumed to have
independent components, which are either discrete with conditional probability mass functions 
$(\mu_{Z_i \mid \overline{U}^{(i)} = u^{(i)}})_{1 \le i \le N}$, or absolutely continuous
with conditional probability density functions which are also denoted by $(\mu_{Z_i \mid \overline{U}^{(i)} = u^{(i)}})_{1 \le i \le N}$. In this paper, we are interested in probability distributions such that:
\begin{equation}
(\forall i \in \{1,\ldots,N\})(\forall \upsilon \in \RR) \qquad
\mu_{Z^{(i)} \mid \overline{U}^{(i)}=\upsilon}(z^{(i)})\propto \exp\big(-\psi_i(\upsilon)\big)
\label{eq:defpsii}
\end{equation}
where the functions $(\psi_i)_{1\le i \le N}$ take their values
in $]\minf,\pinf]$ and satisfy the following assumption.
\begin{assumption}\label{as:psi}
There exists a nonempty subset $\mathbb{I}$ of $\{1,\ldots,N\}$ 
and a constant $\delta \in \RR$ such that,
for all $i\in \{1,\ldots,N\}$,
\begin{enumerate}
\item \label{as:psii} $\dom \psi_i = ]\delta,\pinf[$ if $i\in \mathbb{I}$ and,
$\dom \psi_i = [\delta,\pinf[$ if $i\not \in \mathbb{I}$;
\item \label{as:psiiii} if $i\in \mathbb{I}$, then $\psi_i$ is twice continuously differentiable
on $]\delta,\pinf[$ such that $\inf_{\upsilon\in ]\delta,\pinf[} \psi_i(\upsilon) > \minf$ and
\begin{equation*}
\lim_{\substack{\upsilon \to \delta\\\upsilon > \delta}} \psi_i(\upsilon) = \pinf.
\end{equation*}
Its second-order derivative $\psi_i''$ is 
decreasing and satisfies
\begin{equation*}
\lim_{\upsilon \to \pinf} \psi_i''(\upsilon) = 0;
\end{equation*}
\item   \label{as:psiii} if $i \not\in \mathbb{I}$, then there exists $\alpha_i \in \RP$ such that
$(\forall \upsilon \in [\delta,\pinf[)$ $\psi_i(\upsilon) = \alpha_i \upsilon$.
\end{enumerate}
\end{assumption}
From Assumptions \ref{as:psi}\ref{as:psiiii} and \ref{as:psiii}, it is clear that the functions $(\psi_i)_{1 \le i \le  N}$
are convex (since $(\forall i \in \mathbb{I})$ $(\forall \upsilon \in ]\delta,\pinf[)$ $\psi''_i(\upsilon) \ge 0$) such that
\begin{equation}
\lim_{\substack{\upsilon \to \delta\\\upsilon > \delta}} \psi_i''(\upsilon) = \pinf
\label{eq:psiid}
\end{equation}
and they are lower semicontinuous
(since $(\forall i \in \{1,\ldots,N\})$
$\lim \inf_{\upsilon \to \delta} \psi_i(\upsilon) \ge \psi_i(\delta)$).
Examples of such functions will be provided in Sections
\ref{sec:gaussian} and \ref{sec:poisson}.

In addition, a both simple and efficient prior probabilistic model on the unknown image $\overline{y}$ is adopted by using a representation of this image in a frame \cite{Daubechies_I_1992_book_ten_lw,Han_D_2000_book_frames_bgr}. 
The frame coefficient space is the Euclidean space $\HH =\RR^K$  ($K \ge N$).  
We thus use a linear representation of the form:
\begin{equation*}
\overline{y} = F^* \overline{x}
\end{equation*}
where $F^*\,:\; \HH  \to \GG$ is a frame synthesis operator, i.e. $\underline{\nu}\, \Id \le F^* \circ F \le \overline{\nu}\, \Id$ with
$(\underline{\nu},\overline{\nu}) \in \RPP^2$ (which implies that $F^*$ is surjective).\footnote{The existence of the lower bound implies the existence of the upper bound in finite dimensional case.}
We then assume that the vector  $\overline{x}$ 
of frame coefficients 
is a realization of a random vector $\overline{X}$ with independent components.
Each component $\overline{X}^{(k)}$ with $k \in \{1,\ldots,K\}$ of $\overline{X}$,
has a probability density
$\exp(-\phi_k(\cdot))/\int_{-\infty}^{+\infty}\exp(-\phi_k(\eta))\,d\eta $ where $\phi_k$ is a finite function in $\Gamma_0(\RR)$.

Finally, we assume that we have prior information on $\overline{x}$ which can be expressed by the fact that $\overline{x}$ belongs to a closed convex set $C$ of $\HH$. The constraint set $C$ will be assumed to satisfy:
\begin{equation}
 (TC^*) \cap \dom\Psi \neq \emp
\label{eq:TCet}
\end{equation}
where 
\begin{equation*}
C^* = F^*C = \menge{F^*x}{x\in C}
\label{eq:defCet}
\end{equation*}
and
\begin{equation*}
\left(\forall u = \big(u^{(i)}\big)_{1 \le i \le N} \in \GG\right)\qquad
\Psi(u) = \sum_{i=1}^{N}  \psi_i\big(u^{(i)}\big).
\end{equation*}

With these assumptions, it can be shown (see \cite{Chaux_C_2007_ip_variational_ffbip}) that a Maximum A Posteriori (MAP) estimate of the vector of frame coefficients $\overline{x}$ can be obtained from 
$z=\big(z^{(i)}\big)_{1 \le i \le N}$ by minimizing in the Hilbert space $\HH $ the function $f + g + \ic$
where
\begin{equation} \label{eq:deff}
\left(\forall x = \big(x^{(k)}\big)_{1 \le k \le K} \in \HH\right)\qquad
f(x) = \sum_{k=1}^K \phi_k\big(x^{(k)}\big)
\end{equation}
and 
\begin{equation}\label{eq:defg}
g = \Psi \circ T \circ F^*.  
\end{equation}

We consequently have:
\begin{proposition}\label{p:MAP}
Let $\HH = \RR^K$ and $\GG = \RR^N$ with $K\ge N$. Let $f$ and $g$ be defined by \eqref{eq:deff} and \eqref{eq:defg}, respectively, where $T\colon \GG \to \GG$ is a linear operator.
Under Assumption \ref{as:psi} and Condition \eqref{eq:TCet}, then
\begin{enumerate}
\item \label{p:MAPi} $f$ and $g$ are in $\Gamma_0(\HH)$;
\item  \label{p:MAPii} if $f$ is coercive\footnote{This means that $\lim_{\|x\| \to \pinf} f(x) = \pinf$.} or $\dom g\,\cap\, C$ is bounded, then the minimization of $f+g+\ic$
admits a solution. 
In addition, if $f$ is strictly convex on $\dom g \,\cap\, C$,
the solution is unique.
\end{enumerate}
\end{proposition}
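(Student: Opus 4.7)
The plan is to establish (i) for $f$ and $g$ separately, then derive (ii) from (i) by a standard direct-method argument. For $f$, each $\phi_k$ is finite on $\RR$ and belongs to $\Gamma_{0}(\RR)$, hence is continuous; thus $f$ is a finite sum of continuous convex functions applied to coordinate projections, so it is a finite continuous convex function on $\HH = \RR^{K}$, and in particular an element of $\Gamma_{0}(\HH)$. For $g = \Psi \circ T \circ F^{*}$, I would first verify that each $\psi_{i}$ lies in $\Gamma_{0}(\RR)$: convexity follows from $\psi_{i}'' \geq 0$ on $]\delta,+\infty[$ when $i \in \mathbb{I}$ and from linearity when $i \notin \mathbb{I}$; lower semicontinuity holds on the interior of $\dom \psi_{i}$ by continuity, and at the boundary point $\delta$ either from the limit condition $\psi_{i}(\upsilon) \to +\infty$ as $\upsilon \to \delta^{+}$ (when $\delta \notin \dom \psi_{i}$) or from continuity of a linear function (otherwise); properness together with a finite lower bound $\inf \psi_{i} > -\infty$ is supplied by Assumption~\ref{as:psi}\ref{as:psiiii} when $i \in \mathbb{I}$ and by linearity on the half-line $[\delta,+\infty[$ otherwise. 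Summing componentwise gives $\Psi \in \Gamma_{0}(\GG)$, composing with the bounded linear operator $T \circ F^{*}$ preserves convexity and lower semicontinuity, and condition \eqref{eq:TCet} guarantees $\dom g \neq \emp$, so $g \in \Gamma_{0}(\HH)$.

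For (ii), I would observe that $\dom f = \HH$ yields $\dom(f+g+\ic) = \dom g \cap C$, which is nonempty by \eqref{eq:TCet}; hence $f+g+\ic \in \Gamma_{0}(\HH)$. The componentwise lower bounds obtained above give $\inf \Psi > -\infty$, so $g$ is bounded below on $\HH$ by a constant. I would then split into the two alternatives of the hypothesis. If $\dom g \cap C$ is bounded, every lower level set of $f+g+\ic$ is contained in that set and is therefore bounded. If instead $f$ is coercive, then adding to it the bounded-below function $g+\ic$ leaves a coercive sum, whose lower level sets are again bounded. In either case, a proper lower semicontinuous convex function on the finite-dimensional space $\HH$ with bounded lower level sets attains its infimum, which yields existence. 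For uniqueness, strict convexity of $f$ on $\dom g \cap C$ transfers to $f+g+\ic$ on its domain, as the sum of a strictly convex function with two convex ones, forcing at most one minimizer.

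The main delicacy I anticipate lies in the lower semicontinuity verification for $\Psi$ at the boundary point $\delta$, because Assumption~\ref{as:psi} splits the components into two regimes with distinct domain structures, and the ``barrier'' behavior on $\mathbb{I}$ must be read as lower semicontinuity of the extension by $+\infty$ to all of $\RR$. Apart from that, the argument amounts to a routine composition of convex-analysis facts together with a standard application of the direct method of the calculus of variations in finite dimension.
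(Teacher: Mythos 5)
Your argument is correct and follows essentially the same route as the paper: membership of $f$, $\Psi$, and hence $g$ in the relevant $\Gamma_0$ classes, nonemptiness of $\dom g\cap C$ via \eqref{eq:TCet}, the uniform lower bound $\inf\Psi(\GG)>\minf$ drawn from Assumption \ref{as:psi}, coercivity (equivalently, boundedness of the level sets) of $f+g+\ic$ in each of the two alternatives, existence by the direct method in finite dimension, and uniqueness by strict convexity. The only cosmetic difference is that you spell out the lower semicontinuity of the $\psi_i$ at $\delta$, which the paper establishes in the discussion immediately following Assumption \ref{as:psi} rather than inside the proof.
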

\begin{proof}
\noindent\ref{p:MAPi}: It is clear that $f$  is a finite convex function of $\HH$.
As the 
functions $(\psi_i)_{1 \le i \le  N}$
are in $\Gamma_0(\RR)$,
$\Psi$ belongs to $\Gamma_0(\GG)$. In addition,
by using \eqref{eq:TCet}, we have $\ran(T\circ F^*) \cap \dom \Psi \neq \emp$.
This allows us to deduce that
$\dom g \neq \emp$
and, therefore, $g \in \Gamma_0(\HH)$.\\
\ref{p:MAPii}: We have $\dom f\, \cap\, \dom g\, \cap\, C \neq \emp$ since
$\dom f = \HH$ and \eqref{eq:TCet} shows that $\dom g\, \cap\, C 
= \dom(\Psi \circ T \circ F^*)\, \cap\, C \neq \emp$.
Since $f$ and $g$ 
are in $\Gamma_0(\HH)$, we deduce that $f+g+\ic$
is in $\Gamma_0(\HH)$. \\
Suppose now that $f$ is coercive. By Assumption \ref{as:psi}\ref{as:psiiii}, $(\forall i\in \mathbb{I})$
$\inf_{\upsilon \in ]\delta,\pinf[} \psi_i(\upsilon) > \minf$ whereas,
due to Assumption \ref{as:psi}\ref{as:psiii},
$(\forall i\not\in\mathbb{I})$ $\inf_{\upsilon \in [\delta,\pinf[} \psi_i(\upsilon)
= \alpha_i \delta$. This implies that $\inf \Psi(\GG) > \minf$ and, consequently, $\inf g(\HH) \ge \inf \Psi(\GG) > \minf$. As a result, $f+g+\ic \ge f+\ic+\inf g(\HH)$ is coercive.
When $\dom g \,\cap\, C$ is bounded, $f+g+\ic$ also is coercive.
The existence of a solution to the minimization problem follows from classical results in convex analysis \cite[Chap. 3, Prop. 1.2]{Ekeland_I_1999_book_Convex_aavp}.\\
When $f$ is strictly convex on $\dom g\, \cap\, C$, 
the uniqueness of the solution follows from the fact that $f+g+\ic$ is strictly convex
\cite[Chap. 3, Prop. 1.2]{Ekeland_I_1999_book_Convex_aavp}.
\end{proof}
\begin{remark} \label{re:fcoerc}
The function $f$ is coercive (resp. strictly convex) if and only if 
the functions $(\phi_k)_{1 \le k \le N}$ are coercive {\rm \cite[Prop. 3.3(iii)(c)]{Chaux_C_2007_ip_variational_ffbip}} (resp. strictly convex).
\end{remark}

\subsection{Quadratic extension}

If we now investigate the Lipschitz-continuity of the gradient of $g$, it turns out
that this property may be violated since $\Psi$ is not finite. Due to \eqref{eq:psiid}, the gradient of
$g$ is not even guaranteed to be Lipschitz-continuous on $\inte \dom g$.

To circumvent this problem, it can be noticed that, because of Assumption \ref{as:psi}\ref{as:psiiii} and \eqref{eq:psiid}, for all $i \in \mathbb{I}$,
there exists a decreasing function $\upsilon_i\,:\,\RPP \to ]\delta,\pinf[$
such that $\lim_{\theta \to \pinf} \upsilon_i(\theta) = \delta$ and
\begin{equation}
(\forall \theta\in \RPP)
(\forall \upsilon \in ]\delta,\pinf[)\qquad 0 \le \psi_i''(\upsilon) \le \theta
\Leftrightarrow \upsilon \ge \upsilon_i(\theta).
\label{eq:defuit}
\end{equation}
Let us now consider  the function $g_\theta = \Psi_\theta \circ T \circ F^*$ with
$\theta \in \RPP$, where
\begin{equation*}
\left(\forall u = \big(u^{(i)}\big)_{1 \le i \le N} \in \GG\right)\qquad
\Psi_\theta(u) = \sum_{i=1}^{N}  \psi_{\theta,i}\big(u^{(i)}\big)
\end{equation*}
and the functions $(\psi_{\theta,i})_{1\le i \le N}$ are chosen such that,
\begin{equation}
(\forall \upsilon \in \RR)\qquad \psi_{\theta,i}(\upsilon) =
\begin{cases}
\displaystyle \frac{\theta}{2} \upsilon^2+\zeta_{i,1}(\theta)\; \upsilon+\zeta_{i,0}(\theta)
& \mbox{if $i\in \mathbb{I}$ and 
$\delta-\epsilon(\theta) \le \upsilon < \upsilon_{i}(\theta)$}\\
\alpha_i \upsilon & \mbox{if $i \not \in \mathbb{I}$
and $\delta-\epsilon(\theta) \le \upsilon < \delta$}\\
\psi_i(\upsilon) & \mbox{otherwise.}
\end{cases}
\label{eq:defpsiit}
\end{equation}
Hereabove, $\epsilon\,:\,\RPP \to\RPP$ is a decreasing function and,
\begin{align*}
(\forall i \in \mathbb{I})\qquad
\zeta_{i,0}(\theta) & = \psi_i\big( \upsilon_{i}(\theta)\big)-\upsilon_{i}(\theta) \psi'_{i}\big( \upsilon_{i}(\theta)\big)+\frac{\theta}{2}\big(\upsilon_{i}(\theta)\big)^2
\\
\zeta_{i,1}(\theta) & = \psi'_{i}\big( \upsilon_{i}(\theta)\big)-\theta  \upsilon_{i}(\theta).
\end{align*}
For every $i\in \mathbb{I}$, the constants $\zeta_{i,0}(\theta)$ and $\zeta_{i,1}(\theta)$ have been determined so as to guarantee the continuity of $\psi_{\theta,i}$ and of its first order derivative 
at $\upsilon_i(\theta)$.
Consequently, the following result can be obtained:
\begin{proposition}\label{p:gt}
Suppose that Assumption \ref{as:psi} and Condition \eqref{eq:TCet} hold. Then,
\begin{enumerate}
\item\label{p:gt0} $(\forall \theta \in \RPP)$ $g_\theta \in \Gamma_0(\HH)$.
\item\label{p:gti} $\big(\forall (\theta_1,\theta_2)\in \RPP^2\big)$,
$\theta_1 < \theta_2$ $\Rightarrow$ 
$g_{\theta_1} \le g_{\theta_2} \le g$.
\item\label{p:gtii} 
For every $\theta \in \RPP$, if $TC^* \subset ]\delta-\epsilon(\theta),\pinf[^N$, then
$g_\theta$ has a Lipschitz-continuous gradient over 
$C$
with constant $\beta_\theta = \theta \|TF^*\|^2 \le   \theta\overline{\nu}  \|T\|^2$.
\item\label{p:gtiii} For every $\theta \in \RPP$, if $f$ is coercive or if $\dom g_\theta\,\cap\, C$ is bounded, then the minimization of 
$f+g_\theta+\ic$ admits a solution. In addition, if $f$ is strictly convex
on $\dom g_\theta \cap C$, then $f+g_\theta+\ic$
has a unique minimizer $\widetilde{x}_\theta$.
\item \label{p:gtv}
Assume that 
\begin{enumerate}
\item $\lim_{\theta\to \pinf} \epsilon(\theta) = 0$,
\item \label{as:enplus} $TC^* \subset [\delta,\pinf[^N$,
\item \label{as:coercb}
$f$ is coercive or 
$C$ is bounded,
\item \label{as:strictc} $f$ is strictly convex on $C$.
\end{enumerate}
Then, there exists $\overline{\theta} \in \RPP$ such that, for every $\theta \in [\overline{\theta},\pinf[$, the minimizer $\widetilde{x}_\theta$ of $f+g_\theta+\iota_C$ is the minimizer of $f+g+\iota_C$.
\end{enumerate}
\end{proposition}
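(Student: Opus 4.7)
The plan is to handle parts (i)--(iv) by short computations on the explicit definition of $\psi_{\theta,i}$ and to devote the main effort to (v).

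For (i), the constants $\zeta_{i,0}(\theta),\zeta_{i,1}(\theta)$ are exactly those making $\psi_{\theta,i}$ and its first derivative continuous at $\upsilon_i(\theta)$; the second derivative equals $\theta$ on the quadratic branch and, by \eqref{eq:defuit}, lies in $[0,\theta]$ on $[\upsilon_i(\theta),+\infty[$, giving convexity, lower semicontinuity and properness of each $\psi_{\theta,i}$. Then $g_\theta\in\Gamma_0(\HH)$ follows from \eqref{eq:TCet} and the inclusion $\dom g\subset\dom g_\theta$ (a consequence of (ii)), as in Proposition~\ref{p:MAP}\ref{p:MAPi}. For (iii), the bound $\psi''_{\theta,i}\le\theta$ implies that $\nabla\Psi_\theta$ is $\theta$-Lipschitz on $]\delta-\epsilon(\theta),+\infty[^N$; the chain rule together with $TC^*\subset\,]\delta-\epsilon(\theta),+\infty[^N$ yields the $\theta\|TF^*\|^2$-Lipschitz bound on $C$, with $\|TF^*\|^2\le\overline{\nu}\|T\|^2$ from the frame inequality. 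For (iv), each $\psi_{\theta,i}$ is bounded below (a quadratic on a compact interval glued to $\psi_i$, or an affine piece), hence so is $g_\theta$, and the argument of Proposition~\ref{p:MAP}\ref{p:MAPii} applies verbatim.

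For (ii), the pointwise inequality $\psi_{\theta,i}\le\psi_i$ (case $i\in\mathbb{I}$) comes from the tangent construction: $h=\psi_i-\psi_{\theta,i}$ vanishes together with $h'$ at $\upsilon_i(\theta)$ and satisfies $h''=\psi_i''-\theta\ge 0$ on $]\delta,\upsilon_i(\theta)[$ by monotonicity of $\psi_i''$, so $h\ge 0$. For $\theta_1<\theta_2$ (hence $\upsilon_i(\theta_1)>\upsilon_i(\theta_2)$), compare $\psi_{\theta_1,i}$ and $\psi_{\theta_2,i}$ on four subintervals: both equal $\psi_i$ on $[\upsilon_i(\theta_1),+\infty[$; $\psi_{\theta_1,i}\le\psi_i=\psi_{\theta_2,i}$ on $[\upsilon_i(\theta_2),\upsilon_i(\theta_1)[$; on the common quadratic stretch $[\delta-\epsilon(\theta_2),\upsilon_i(\theta_2)[$ the difference $\psi_{\theta_2,i}-\psi_{\theta_1,i}$ is convex (second derivative $\theta_2-\theta_1>0$) with nonnegative value and nonpositive derivative at $\upsilon_i(\theta_2)$ (the latter obtained from $\int_{\upsilon_i(\theta_2)}^{\upsilon_i(\theta_1)}\psi_i''\le\theta_2(\upsilon_i(\theta_1)-\upsilon_i(\theta_2))$), hence nonnegative throughout; on the tail $[\delta-\epsilon(\theta_1),\delta-\epsilon(\theta_2)[$ the right-hand side is $+\infty$. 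The $i\not\in\mathbb{I}$ case is immediate, and composing with $T\circ F^*$ gives $g_{\theta_1}\le g_{\theta_2}\le g$.

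The substance is (v). Let $\widetilde{x}$ be the unique minimizer of $f+g+\iota_C$, guaranteed by (c), (d) and Proposition~\ref{p:MAP}\ref{p:MAPii}. The strategy is to show that for $\theta$ large enough, $g_\theta$ coincides with $g$ on a $C$-neighborhood of $\widetilde{x}$; since $f+g_\theta+\iota_C$ is convex, $\widetilde{x}$ is then a local, hence global, minimizer of it, and uniqueness (from (iv)) forces $\widetilde{x}_\theta=\widetilde{x}$. Set $u=TF^*\widetilde{x}\in TC^*\subset[\delta,+\infty[^N$. Because $\widetilde{x}\in\dom g$ and by Assumption~\ref{as:psi}\ref{as:psii}, $u^{(i)}>\delta$ for every $i\in\mathbb{I}$; since $\mathbb{I}$ is finite and $\upsilon_i(\theta)\to\delta$, one can pick $\overline{\theta}\in\RPP$ with $\upsilon_i(\theta)<\tfrac{1}{2}(u^{(i)}+\delta)$ for every $\theta\ge\overline{\theta}$ and $i\in\mathbb{I}$. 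Continuity of $TF^*$ then provides an open neighborhood $\mathcal{V}$ of $\widetilde{x}$ such that $(TF^*x)^{(i)}>\upsilon_i(\theta)$ for all $x\in\mathcal{V}$ and $i\in\mathbb{I}$. For $x\in\mathcal{V}\cap C$: if $i\in\mathbb{I}$, $\psi_{\theta,i}((TF^*x)^{(i)})=\psi_i((TF^*x)^{(i)})$ by construction; if $i\not\in\mathbb{I}$, hypothesis (b) gives $(TF^*x)^{(i)}\ge\delta$ and thus $\psi_{\theta,i}((TF^*x)^{(i)})=\alpha_i(TF^*x)^{(i)}=\psi_i((TF^*x)^{(i)})$. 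Summing yields $g_\theta=g$ on $\mathcal{V}\cap C$, completing the argument. The delicate points are the quadratic-overlap subcase in (ii) and, in (v), the role of hypothesis (b): it is precisely $TC^*\subset[\delta,+\infty[^N$ that controls the indices outside $\mathbb{I}$ (whose $\dom\psi_i$ includes the closed endpoint $\delta$), so that the artificial extension into $[\delta-\epsilon(\theta),\delta[$ plays no role once $x\in C$ and the coincidence $g_\theta=g$ on the neighborhood is genuine.
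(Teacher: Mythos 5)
Your treatment of parts (i)--(iv) follows essentially the same lines as the paper. For part (v), however, you take a genuinely different and more elementary route. The paper forms the increasing family $h_\theta=f+g_\theta+\iota_C$, invokes epi-convergence of increasing sequences in $\Gamma_0(\HH)$ together with \cite[Theorem 7.33]{Rockafellar_RT_2004_book_Variational_a} to obtain $\widetilde{x}_{\theta_\ell}\to\widetilde{x}$, and only then shows that $h_{\theta}$ and $h$ coincide on a set containing both minimizers. You bypass the variational-analysis machinery entirely: since $u=TF^*\widetilde{x}$ satisfies $u^{(i)}>\delta$ for $i\in\mathbb{I}$ and $\upsilon_i(\theta)\to\delta$, for $\theta$ large enough $g_\theta$ and $g$ coincide on $\mathcal{V}\cap C$ for a neighborhood $\mathcal{V}$ of $\widetilde{x}$ (hypothesis (b) disposing of the indices outside $\mathbb{I}$, as you rightly emphasize), so $\widetilde{x}$ is a local, hence by convexity global, minimizer of $f+g_\theta+\iota_C$, and uniqueness from (iv) gives $\widetilde{x}_\theta=\widetilde{x}$. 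This is correct, shorter, does not rely on the monotonicity of part (ii) at this stage, and treats each $\theta\ge\overline{\theta}$ directly instead of via the chain $h_{\theta_{\overline{\ell}}}\le h_\theta\le h$; what the paper's route buys in exchange is the convergence $\widetilde{x}_{\theta_\ell}\to\widetilde{x}$ itself, which your argument neither produces nor needs.

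One small slip in part (ii): on the common quadratic stretch you justify $d'\big(\upsilon_i(\theta_2)\big)\le 0$ for $d=\psi_{\theta_2,i}-\psi_{\theta_1,i}$ by the inequality $\int_{\upsilon_i(\theta_2)}^{\upsilon_i(\theta_1)}\psi_i''\le\theta_2\big(\upsilon_i(\theta_1)-\upsilon_i(\theta_2)\big)$, but since $d'\big(\upsilon_i(\theta_2)\big)=\theta_1\big(\upsilon_i(\theta_1)-\upsilon_i(\theta_2)\big)-\int_{\upsilon_i(\theta_2)}^{\upsilon_i(\theta_1)}\psi_i''$, the bound actually required is the opposite-direction one, $\int_{\upsilon_i(\theta_2)}^{\upsilon_i(\theta_1)}\psi_i''\ge\theta_1\big(\upsilon_i(\theta_1)-\upsilon_i(\theta_2)\big)$, which follows from $\psi_i''>\theta_1$ on $\left]\delta,\upsilon_i(\theta_1)\right[$ by \eqref{eq:defuit}. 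The conclusion stands, but the cited inequality is the wrong one. The paper sidesteps this bookkeeping by comparing second derivatives on all of $\left]\delta-\epsilon(\theta_2),\upsilon_i(\theta_1)\right[$ at once and integrating twice backwards from $\upsilon_i(\theta_1)$, where the values and slopes of the two functions agree.
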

\begin{proof}
\ref{p:gt0} 
Since $\Psi_\theta$ is defined and continuous on $[\delta-\epsilon(\theta),\pinf[^N$ and,
$(\forall i \in \{1,\ldots,N\})$
$(\forall \upsilon \in ]\delta-\epsilon(\theta),\pinf[)$
$\psi_{\theta,i}''(\upsilon)\ge 0$, we have $\Psi_\theta \in \Gamma_0(\GG)$. In addition, $\dom \Psi_\theta \cap \ran(T\circ F^*) \supset \dom \Psi \cap \ran(T\circ F^*) \neq \emp$. Thus, $g_\theta \in \Gamma_0(\HH)$.\\
\ref{p:gti}
As a consequence of \eqref{eq:defpsiit} and \eqref{eq:defuit}, 
we have, for every $i\in \mathbb{I}$,
\begin{equation*}
(\forall \upsilon \in ]\delta,\upsilon_{i}({\theta_2})[)\qquad \psi_{i}''(\upsilon) > \psi_{{\theta_2},i}''(\upsilon) = {\theta_2}.
\end{equation*}
So $\psi_i'-\psi_{{\theta_2},i}'$ is a strictly increasing function
over $]\delta,\upsilon_i({\theta_2})]$ and
\begin{equation*}
(\forall \upsilon \in ]\delta,\upsilon_i({\theta_2})[)\qquad
\psi_{i}'(\upsilon) -\psi_{{\theta_2},i}'(\upsilon)
< \psi_{i}'\big(\upsilon_{i}({\theta_2})\big) - \psi_{{\theta_2},i}'\big(\upsilon_{i}({\theta_2})) = 0
\end{equation*}
which, in turn, shows that $\psi_{i}- \psi_{{\theta_2},i}$
is strictly decreasing on $]\delta,\upsilon_i({\theta_2})]$
and
\begin{equation*}
(\forall \upsilon \in ]\delta,\upsilon_i({\theta_2})[)\qquad
\psi_{i}(\upsilon) -\psi_{{\theta_2},i}(\upsilon)
> \psi_{i}\big(\upsilon_{i}({\theta_2})\big) - \psi_{{\theta_2},i}\big(\upsilon_{i}({\theta_2})) = 0.
\end{equation*}
In addition, we know that, if $(i\in \mathbb{I}$ and $\upsilon \le \delta)$ or
$(i \not\in \mathbb{I}$ and $\upsilon < \delta)$, then
$\psi_{i}(\upsilon)=\pinf$ and,
if $\big(i\in \mathbb{I}$ and $\upsilon
\ge \upsilon_i({\theta_2})\big)$ or $(i\not\in \mathbb{I}$ and $\upsilon \ge \delta)$, then
$\psi_{i}(\upsilon) = \psi_{{\theta_2},i}(\upsilon)$. We deduce that, for
all $i\in \{1,\ldots,N\}$, $\psi_i \ge \psi_{{\theta_2},i}$
and, therefore $g$ is lower bounded by $g_{\theta_2}$.\\
By proceeding similarly, we have, for every $i\in \mathbb{I}$,
\begin{alignat*}{2}
&(\forall \upsilon \in [\upsilon_i(\theta_1),\pinf[)\qquad
&&\psi_{\theta_2,i}(\upsilon)= \psi_i(\upsilon) = \psi_{\theta_1,i}(\upsilon)\\
&(\forall \upsilon \in ]\delta-\epsilon(\theta_2),\upsilon_i(\theta_1)[)\qquad
&&\psi''_{\theta_2,i}(\upsilon) > \theta_1 = \psi''_{\theta_1,i}(\upsilon)\nonumber\\
&\Rightarrow\;\; (\forall \upsilon \in ]\delta-\epsilon(\theta_2),\upsilon_i(\theta_1)[)\qquad
&& \psi'_{\theta_2,i}(\upsilon) < \psi'_{\theta_1,i}(\upsilon)
\nonumber\\
&\Rightarrow\;\;(\forall \upsilon \in [\delta-\epsilon(\theta_2),\upsilon_i(\theta_1)[)\qquad
&& \psi_{\theta_2,i}(\upsilon) > \psi_{\theta_1,i}(\upsilon).
\end{alignat*}
In addition, 
\begin{equation*}
(\forall i \in \{1,\ldots,N\})
(\forall \upsilon \in ]\minf,\delta-\epsilon(\theta_2)[)\qquad
\psi_{\theta_2,i}(\upsilon) = \pinf \ge \psi_{\theta_1,i}(\upsilon)
\end{equation*}
and
\begin{equation*}
(\forall i \not\in \mathbb{I})
(\forall \upsilon \in [\delta-\epsilon(\theta_2),\pinf[)\qquad
\psi_{\theta_2,i}(\upsilon) =\psi_{\theta_1,i}(\upsilon).
\end{equation*}
This shows that $\Psi_{\theta_2} \ge \Psi_{\theta_1}$
and, consequently, $g_{\theta_2} \ge g_{\theta_1}$.

\noindent\ref{p:gtii}:
As already mentioned, $\dom \Psi_\theta = [\delta-\epsilon(\theta),\pinf[^N$.
Consider 
\begin{equation*}
O_\theta = (TF^*)^{-1}(]\delta-\epsilon(\theta),\pinf[^N) = \menge{x\in\HH}{TF^*x \in ]\delta-\epsilon(\theta),\pinf[^N}.
 \label{eq:defO}
\end{equation*}
 $O_\theta$ is an open set and, 
as $TC^* \subset ]\delta-\epsilon(\theta),\pinf[^N$, we have: $C \subset O_\theta$.
In addition, 
the function $g_\theta$ is differentiable on $O_\theta$ 
and its gradient is \cite[Chap. 1, Prop. 5.7]{Ekeland_I_1999_book_Convex_aavp}
\begin{equation}
(\forall x \in O_\theta)
\qquad \nabla g_\theta(x) = FT^*\big(\nabla \Psi_\theta(TF^*x)\big)
\label{eq:gradcomp}
\end{equation} 
where
\begin{equation*}
\big(\forall u =(u^{(i)})_{1\le i \le n} \in 
]\delta-\epsilon(\theta),\pinf[^N\big)
\qquad \nabla\Psi_\theta(u) = \big(\psi'_{\theta,i}(u^{(i)})\big)_{1 \le i \le N}.
\end{equation*}
We have then
\begin{multline*}
\big(\forall u =(u^{(i)})_{1\le i \le n} \in 
]\delta-\epsilon(\theta),\pinf[^N\big)
\big(\forall v =(v^{(i)})_{1\le i \le n} \in 
]\delta-\epsilon(\theta),\pinf[^N\big)\\
\|\nabla\Psi_\theta(u)-\nabla\Psi_\theta(v)\|
= \Big(\sum_{i=1}^N \big(\psi'_{\theta,i}(u^{(i)})- \psi'_{\theta,i}(v^{(i)})\big)^2\Big)^{1/2}
\end{multline*}
and, by the mean value theorem,
\begin{align*}
(\forall i \in \{1,\ldots,N\})\quad
\big|\psi'_{\theta,i}(u^{(i)})- \psi'_{\theta,i}(v^{(i)})\big|
&\le |u^{(i)}-v^{(i)}| 
\sup_{\xi \in ]\delta-\epsilon(\theta),\pinf[} 
|\psi''_{\theta,i}(\xi)| \nonumber\\
&\le \theta |u^{(i)}-v^{(i)}|.
\end{align*}
This yields
\begin{equation*}
\big(\forall u\in 
]\delta-\epsilon(\theta),\pinf[^N\big)
\big(\forall v\in 
]\delta-\epsilon(\theta),\pinf[^N\big)
\qquad\|\nabla\Psi_\theta(u)-\nabla\Psi_\theta(v)\|
\le \theta \|u-v\|
\end{equation*} 
and, we deduce from \eqref{eq:gradcomp} that
\begin{equation*}
\big(\forall (x,x') \in 
O_\theta^2\big)
\qquad\|\nabla g_\theta(x)-\nabla g_\theta(x')\| \le 
\theta \|T F^*\|^2 \|x-x'\|.
\end{equation*}
and $\|T F^*\|^2 \le \|F\|^2\|T\|^2 \le \overline{\nu}\|T\|^2$.\\
\ref{p:gtiii}: The proof is similar to that of Proposition \ref{p:MAP}\ref{p:MAPii}.\\
\ref{p:gtv}: In the following, we use the notation:
$h = f+g+\ic$ and
$(\forall \theta \in \RPP)$ $h_\theta = f+g_\theta+\ic$.\\
Let $(\theta_\ell)_{\ell \in \NN}$ be an increasing sequence
of $\RPP$ such that $\lim_{\ell \to \pinf} \theta_\ell = \pinf$.
As a consequence of \ref{p:gt0} and \ref{p:gti}, $(h_{\theta_\ell})_{\ell \in \NN}$
is an increasing sequence of functions in $\Gamma_0(\HH)$.
We deduce from \cite[Proposition 7.4(d)]{Rockafellar_RT_2004_book_Variational_a}
that $(h_{\theta_\ell})_{\ell \in \NN}$ epi-converges
to its pointwise limit. By using \eqref{eq:defpsiit} in combination with the facts that $(\forall i \in \mathbb{I})$
$\lim_{\theta\to \pinf} \upsilon_i(\theta) = \delta$
and $\lim_{\theta \to \pinf} \epsilon(\theta) = 0$,
we see that the pointwise limit is equal to $h$.\\
Under Assumptions \ref{as:enplus} and \ref{as:coercb}, $(\forall \ell \in \NN)$ $h_{\theta_\ell}$
is coercive since $C\,\cap\,\dom g_{\theta_\ell} = C$.
Equivalently, its level sets 
$\lev{\eta} h_{\theta_\ell} =\menge{x \in \HH}{h_{\theta_\ell}(x) \le \eta}$
with $\eta \in \RR$, are bounded.
 $(h_{\theta_\ell})_{\ell \in \NN}$
being a sequence of increasing functions, 
$\cup_{\ell\in\NN} \lev{\eta}h_{\theta_\ell} = \lev{\eta}h_{\theta_0}$
is bounded.
As the functions $h_{\theta_\ell}$ with $\ell \in \NN$ and $h$ are lower semicontinuous and proper, \cite[Theorem 7.33]{Rockafellar_RT_2004_book_Variational_a} allows us to claim that the sequence $(\widetilde{x}_{\theta_\ell})_{\ell \in \NN}$
converges to the minimizer $\widetilde{x}$ of $h$ 
(by Assumptions~\ref{as:enplus} and \ref{as:strictc}, both $h_{\theta_\ell}$ with $\ell \in \NN$ and
$h$ have a unique minimizer due to the strict convexity of $f$
on $(C\,\cap\,\dom g)\subset (C\,\cap\,\dom g_{\theta_\ell}) = C$
and, Propositions \ref{p:MAP}\ref{p:MAPii} and \ref{p:gt}\ref{p:gtiii}).
As $\widetilde{x} \in \dom h$, $(\forall i \in \mathbb{I})$
$(TF^*\widetilde{x})^{(i)} \in \dom \psi_i = ]\delta,\pinf[$,
where, for every $x\in \HH$ and $i\in \{1,\ldots,N\}$,
$(TF^* x)^{(i)}$ denotes the $i$-th component of vector $TF^* x$.
Since $\lim_{\ell \to \pinf} \widetilde{x}_{\theta_\ell} =
\widetilde{x}$, we have, for every $i\in \mathbb{I}$,
\begin{align*}
(\forall \eta \in \RPP) (\exists \ell_{\eta,i} \in \NN)
\text{ such that }\\ 
\begin{split}
(\forall \ell \in \NN)\quad \ell \ge \ell_{\eta,i}  \Rightarrow &  
| (TF^*\widetilde{x}_{\theta_\ell})^{(i)} - (TF^*\widetilde{x})^{(i)}|
< \eta\\
\Rightarrow &  
(TF^*\widetilde{x}_{\theta_\ell})^{(i)} > \min_{i\in \mathbb{I}} (TF^*\widetilde{x})^{(i)} - \eta.
\end{split}
\end{align*}
By setting $\displaystyle \eta = \frac{\min_{i\in \mathbb{I}}(TF^*\widetilde{x})^{(i)}-\delta}{2} > 0$ and $\ell_\eta = \max_{i\in\mathbb{I}} \ell_{\eta,i}$, we deduce that
\begin{equation}
(\forall \ell \in \NN)\quad \ell \ge \ell_\eta 
\Rightarrow (TF^*\widetilde{x}_{\theta_\ell})^{(i)} \ge \underline{\upsilon} 
\label{eq:minlbound}
\end{equation}
where $\displaystyle \underline{\upsilon} = \frac{\delta+\min_{i\in \mathbb{I}} (TF^*\widetilde{x})^{(i)}}{2}> \delta$.
In addition, since $\lim_{\ell \to \pinf} \theta_\ell = \pinf$ $\Rightarrow$
$\lim_{\ell \to \pinf} 
\max_{i\in \mathbb{I}} \upsilon_{i}(\theta_\ell) = \delta$, there exists 
$\overline{\ell}\ge \ell_\eta$ such that $(\forall i \in\mathbb{I})$
$\upsilon_{i}(\theta_{\overline{\ell}}) \le \underline{\upsilon}$.
By using \eqref{eq:defpsiit}, this implies that $(\forall i \in \mathbb{I})$ $(\forall \upsilon \in 
[\underline{\upsilon},\pinf[)$, $\psi_{\theta_{\overline{\ell}},i}(\upsilon) = 
\psi_i(\upsilon)$. By defining now
\begin{equation*}
D =
\menge{x \in \dom g}
{(\forall i \in \mathbb{I})\; (TF^* x)^{(i)} \in [\underline{\upsilon},\pinf[}
\end{equation*}
we deduce that
$(\forall x \in D)$  $h_{\theta_{\overline{\ell}}}(x) = h(x)$. 
Moreover, according to Assumption~\ref{as:enplus}, for every $\ell \in \NN$,
if $i\not\in \mathbb{I}$,
\begin{equation}
(TF^*\widetilde{x}_{\theta_\ell})^{(i)}\in [\delta,\pinf[.
\label{eq:audessus}
\end{equation}
Altogether, \eqref{eq:minlbound} and \eqref{eq:audessus} show that both 
$\widetilde{x}_{\theta_{\overline{\ell}}}$ and $\widetilde{x}$
belong to $D$. Consequently, as $\widetilde{x}_{\theta_{\overline{\ell}}}
= \arg\min_{x\in \HH} h_{\theta_{\overline{\ell}}}(x)$, we have:
$h(\widetilde{x}_{\theta_{\overline{\ell}}})=
h_{\theta_{\overline{\ell}}}(\widetilde{x}_{\theta_{\overline{\ell}}})
\le h_{\theta_{\overline{\ell}}}(\widetilde{x}) = h(\widetilde{x})$, which
proves that $\widetilde{x}_{\theta_{\overline{\ell}}}=\widetilde{x}$.\\
Considering now $\theta \in [\theta_{\overline{\ell}},\pinf[$,
from \ref{p:gti} we get:
$h_{\theta_{\overline{\ell}}}\le 
h_\theta \le h$.
Thus, $h(\widetilde{x})=h_{\theta_{\overline{\ell}}}(\widetilde{x}) \le h_\theta(\widetilde{x}) 
\le h(\widetilde{x})$,
which results in $h_\theta(\widetilde{x})= h(\widetilde{x})$, while
\begin{equation*}
(\forall x \in \HH)\qquad 
h_\theta(x) \ge 
h_{\theta_{\overline{\ell}}}(x)
\ge h_{\theta_{\overline{\ell}}}(\widetilde{x})=  
h(\widetilde{x}).
\end{equation*}
This allows us to conclude that $\widetilde{x}_\theta = \widetilde{x}$
as soon as $\theta \ge \theta_{\overline{\ell}} = \overline{\theta}$.
\end{proof}
\begin{remark}
\begin{enumerate}
\item A polynomial approximation of the objective function was considered in \cite{Fessler_JA_1995_tip_Hybrid_ppofftirfts} which is different from the proposed quadratic extension technique.
\item As expressed by Proposition \ref{p:gt}\ref{p:gti}, $g_\theta$
(resp. $f+g_\theta+\ic$) with $\theta > 0$
constitutes a lower approximation of $g$ (resp. $f+g+\ic$),
which becomes closer as $\theta$ increases.
\item  As shown by Proposition \ref{p:gt}\ref{p:gtii}, the main role of parameter $\theta$ is to control the Lipschitz constant of the gradient of this approximation of $g$.
\item At the same time, Proposition \ref{p:gt}\ref{p:gtv} indicates that this parameter allows us to
control the closeness of the approximation to a minimizer of the original MAP criterion. This approximation becomes perfect when $\theta$ becomes greater than some
value $\overline{\theta}$.
\end{enumerate}
\end{remark}
Under the assumptions of Proposition \ref{p:gt}\ref{p:gtii},
the minimization of $f+g_\theta+\ic$ with $\theta \in \RPP$ is a problem
of the type of Problem \ref{prob:minimisationgen}. Therefore, Propositions~\ref{p:convfwDR} and \ref{p:convDRfw} show that, 
provided that $f$ is coercive or $C$ is bounded, 
Algorithms~\ref{algo:main1}
and \ref{algo:main} can be applied in this context. 
In addition, Proposition \ref{p:gt}\ref{p:gtv} suggests that, by choosing $\theta$ large enough, a solution to the original MAP criterion can be found.
However, according to Proposition~\ref{p:gt}\ref{p:gtii}, a large value of $\theta$ induces a large value of the Lipschitz constant $\beta_\theta$. This means that a small value of the step-size parameter must also to be used in the forward iteration of the algorithms, which is detrimental to the convergence speed. 
In practice, the choice of $\theta$ results from a trade-off as will be illustrated by the numerical results.

\subsection{First example}\label{sec:gaussian}
\subsubsection{Model}
We want to restore an image $\overline{y}\in\RP^N$ corrupted by a linear operator $T\,:\,\GG \to \GG$ and an additive noise $w\in \GG$, having the observation 
\begin{equation*}
z=T\overline{y}+w=\overline{u}+w.
\end{equation*}
In addition, the linear operator $T$ is assumed to be nonnegative-valued (in the sense that
the matrix associated to $T$ has nonnegative elements)
and, $w = (w^{(i)})_{1 \le i \le N}$ is a realization of an independent zero-mean Gaussian noise $W = (W^{(i)})_{1 \le i \le N}$. The variance of each random variable $W^{(i)}$ with $i\in \{1,\ldots,N\}$ is signal-dependent and is equal to $\sigma_i^2(\overline{u}^{(i)})$ where
\begin{equation*}
(\forall \upsilon \in 
[0,+\infty[)\qquad 
\sigma_i^2(\upsilon) = \frac{\upsilon}{2\alpha_i}
\end{equation*}
with  $\alpha_i \in \RPP$.
So, the functions $(\psi_i)_{1\le i \le N}$ as defined in \eqref{eq:defpsii}
are, when $z^{(i)} \neq 0$,
\begin{equation*}
(\forall \upsilon \in \RR)\qquad \psi_i(\upsilon) =
\begin{cases}
\displaystyle \frac{\alpha_i\big(\upsilon-z^{(i)}\big)^2}{\upsilon}
& \mbox{if $\upsilon \in \RPP$}\\
+\infty & \mbox{otherwise}
\end{cases}
\end{equation*}
and, when $z^{(i)} = 0$,
\begin{equation*}
(\forall \upsilon \in \RR)\qquad \psi_i(\upsilon) =
\begin{cases}
\displaystyle \alpha_i\upsilon
& \mbox{if $\upsilon \in \RP$}\\
+\infty & \mbox{otherwise.}
\end{cases}
\end{equation*}
So, provided that $z\neq 0$,
Assumption \ref{as:psi} is satisfied with $\delta = 0$
and $\mathbb{I} =$\linebreak$ \menge{i\in \{1,\ldots,N\}}{z^{(i)} \neq 0}$
since, for all $i\in \mathbb{I}$, 
\begin{align*}
(\forall \upsilon \in \RPP)\qquad \psi_i'(\upsilon) &=\alpha_i 
\frac{\upsilon^2-(z^{(i)})^2}{\upsilon^2}\\
\psi_i''(\upsilon) & =  \frac{2\alpha_i(z^{(i)})^2}{\upsilon^3}.
\end{align*}
We deduce from \eqref{eq:defuit} that, for every
$i \in \mathbb{I}$,
\begin{equation*}
(\forall \theta \in \RPP)\qquad
\upsilon_i(\theta) = \Big(\frac{2\alpha_i(z^{(i)})^2}{\theta}\Big)^{1/3}.
\end{equation*}

\subsubsection{Simulation results}
\label{sec:simulSigDep}
Here, $T$ is either a $3\times 3$ or a $7\times 7$ uniform convolutive blur with $\|T\|=1$. The $512 \times 512$ satellite image $\overline{y}$ 
($N = 512^2$) shown in Fig.~\ref{fig:marseille_sigdep}(a) has been degraded by $T$ and a signal-dependent additive noise following the model described in the previous section with $\alpha_i \equiv 1$ or $\alpha_i \equiv 5$.
The degraded image $z$ displayed in Fig.~\ref{fig:marseille_sigdep}(b) corresponds to a $7\times 7$ uniform blur and $\alpha_i \equiv 1$.

A twice redundant dual-tree tight frame representation \cite{Chaux_C_2006_tip_ima_adtmbwt} ($\underline{\nu}=\overline{\nu}=2$, $K = 2 N$)  
using symlet filters of length $6$ \cite{Daubechies_I_1992_book_ten_lw} has been employed in this example.
The potential functions $\phi_k$ are taken of the form $\chi_k |\,.\,|+\omega_k |\,.\,|^{p_k}$ where $(\chi_k,\omega_k)\in \RPP^2$ and $p_k \in \{4/3,3/2,2\}$ are subband adaptive. These parameters have been determined by a maximum likelihood approach.
The function $f$ as defined by \eqref{eq:deff} is therefore coercive and strictly convex (see Remark~\ref{re:fcoerc}). 

A constraint on the solution is introduced to take into account the range of admissible values in the image by choosing
\begin{eqnarray}
C^* = [0,255]^N.
\label{eq:constval}
\end{eqnarray}
Due to the form of the operator $T$, $TC^* = C^*$ and Condition \eqref{eq:TCet} is therefore
satisfied. Proposition \ref{p:MAP} thus guarantees that a unique solution $\widetilde{x}$ to the 
MAP estimation problem exists. According to Proposition \ref{p:gt}\ref{p:gtiii},
for every $\theta \in \RPP$,
a unique minimizer $\widetilde{x}_\theta$ of $f+g_\theta+\iota_C$ also exists which allows us to approximate
$\widetilde{x}$ as stated by Proposition \ref{p:gt}\ref{p:gtv}.

Since, for every $\theta \in \RPP$, $TC^* = C^* \subset [-\epsilon(\theta),\pinf[^N$,
Proposition \ref{p:gt}\ref{p:gtii} shows that $g_\theta$ has a Lipschitz-continuous
gradient over $C$ and Algorithms \ref{algo:main1} and \ref{algo:main} can be used to compute $\widetilde{x}_\theta$.
The two algorithms are subsequently tested. 

On the one hand, when Algorithm \ref{algo:main1} is used,
the initialization is performed by setting $z_0 = P_C z$ and we choose $\gammat \equiv 60$ and $\lambdat_m \equiv 1$. 
The projection onto $C=(F^*)^{-1} C^*$ is $P_C = \prox_{\iota_{C^*}\circ F^*}$ which can be computed by using Proposition~\ref{p:linprox}
with $L = F^*$.
The other parameters have been fixed to $\lambda_{m,n} \equiv 1$ and $\gamma_{m,n} \equiv 0.995/(\kappa\theta)$, 
in compliance with Proposition \ref{p:gt}\ref{p:gtii}. 
The convergence of the algorithm is secured by Proposition~\ref{p:convfwDR}
since Assumption \ref{a:ddd}\ref{a:ddd1} trivially holds.
However, to improve the convergence profile, the following empirical rule
for choosing the number $N_m$ of forward-backward iterations
has been substituted for the necessary Conditions \eqref{eq:condconvfwDR0} and \eqref{eq:condconvfwDRm}:
\begin{equation}
N_m = \inf\menge{n\in \NN^*}{\|x_{m,n}-x_{m,n-1}\| \le \eta}
\label{eq:Nmchoice}
\end{equation}
with $\eta = 10^{-4}$.

On the other hand, when Algorithm~\ref{algo:main} is used,
the parameters have been chosen as follows : $\lambda_n \equiv 1$, $\tau_{n,m} \equiv 1$ and $\gamma_n \equiv 0.995/\theta$. The algorithm has been initialized by setting $x_0 = P_C z$ where the projection onto $C$ is computed as described previously.
The convergence of the algorithm is ensured by Proposition~\ref{p:convDRfw}.
The number $M_n$ of Douglas-Rachford iterations has been fixed as follows:
\begin{equation}
M_n = \inf\menge{m\in \NN^*}{\|z_{n,m}-z_{n,m-1}\| \le \eta}
\end{equation}
with the same value of $\eta$ as for the first algorithm.

The error between an image $y$ and the original image $\overline{y}$ is evaluated by the signal to noise ratio (SNR) defined as
$20 \log_{10}(\|\overline{y}\|/\|y-\overline{y}\|)$.

Three objectives are targeted in our experiments. First, 
we want to study the performance of the proposed approach, using the redundant dual-tree transform (DTT). 
The results presented in Tab.~\ref{tab:snrmars_sigdep} have been generated by
Algorithm~\ref{algo:main1}, but Algorithm~\ref{algo:main} leads to the same results.

\begin{table}[htbp]
\centering
\begin{tabular}{|c||c|c|c|c|c||c|c|c|c|}
\cline{3-10}
\multicolumn{2}{c|}{}& \multicolumn{4}{c||}{$3 \times 3$ blur}& \multicolumn{4}{c|}{$7 \times 7$ blur}\\
\hline
 & $\theta$ & 0.025 & 0.05 & 5  & 7 & 0.025 & 0.05 & 5 & 7\\ 
  \cline{2-10}
 $\alpha_i=1$ & SNR & 13.9 & 16.3 & \textbf{16.8} & 16.8 &  10.9 & 11.9 & \textbf{12.1} & 12.1\\
\hline
\hline
 & $\theta$ & 0.15 & 0.25 & 10 & 12 & 0.15 & 0.25 & 10 & 12\\
 \cline{2-10}
 $\alpha_i=5$ & SNR & 15.9 & 18.0 & \textbf{18.8} & 18.8 & 12.6 & 13.3 & \textbf{13.7} & 13.7 \\
\hline
\end{tabular}
\caption{\textrm{SNR} for the satellite image.\label{tab:snrmars_sigdep}}
\end{table}

As suggested by
Proposition \ref{p:gt}\ref{p:gtv}, as $\theta$ increases, the 
image is better restored. 
The effectiveness of the proposed approach is also demonstrated visually in Fig.~\ref{fig:marseille_sigdep}(c) showing the
restored image 
when $T$ is a $7 \times 7$ uniform blur, $\alpha_i\equiv 1$ and $\theta=0.05$.
It can be observed that the algorithm allows us to recover most of the 
details which were not perceptible due to blur and noise. 
\begin{figure}[htbp]
\begin{center}
\begin{tabular}{cc}
\includegraphics[width=6cm]{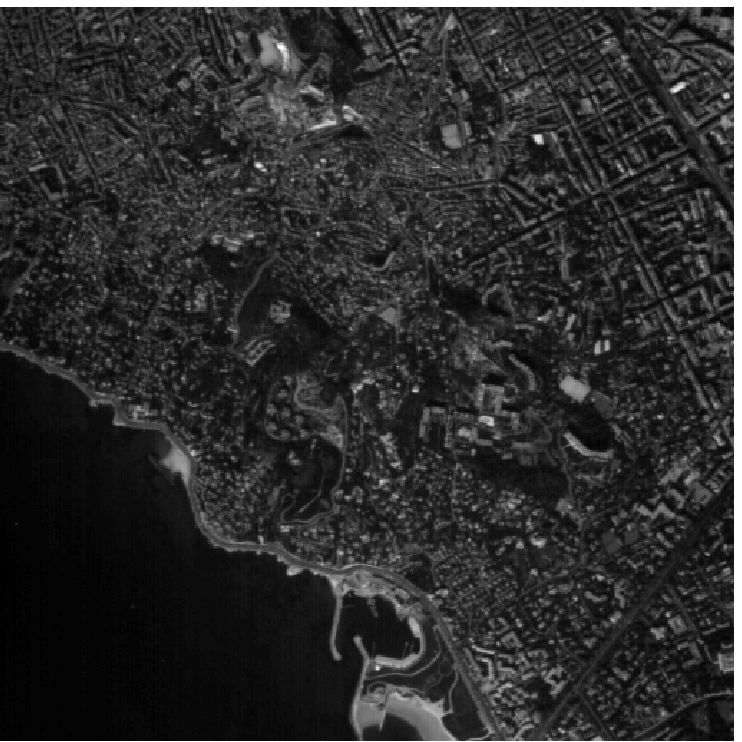} & \includegraphics[width=6cm]{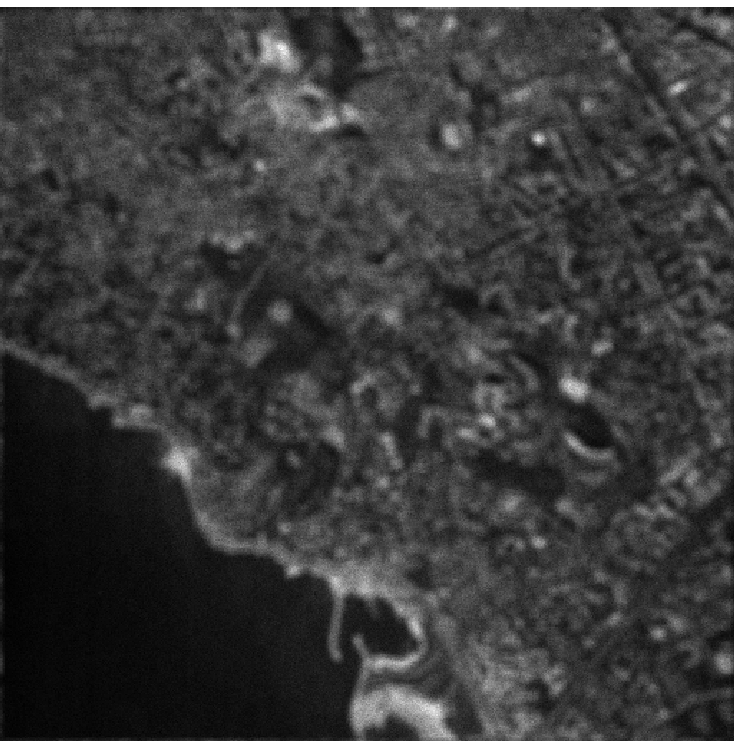}\\
(a) & (b) \\
\multicolumn{2}{c}{\includegraphics[width=6cm]{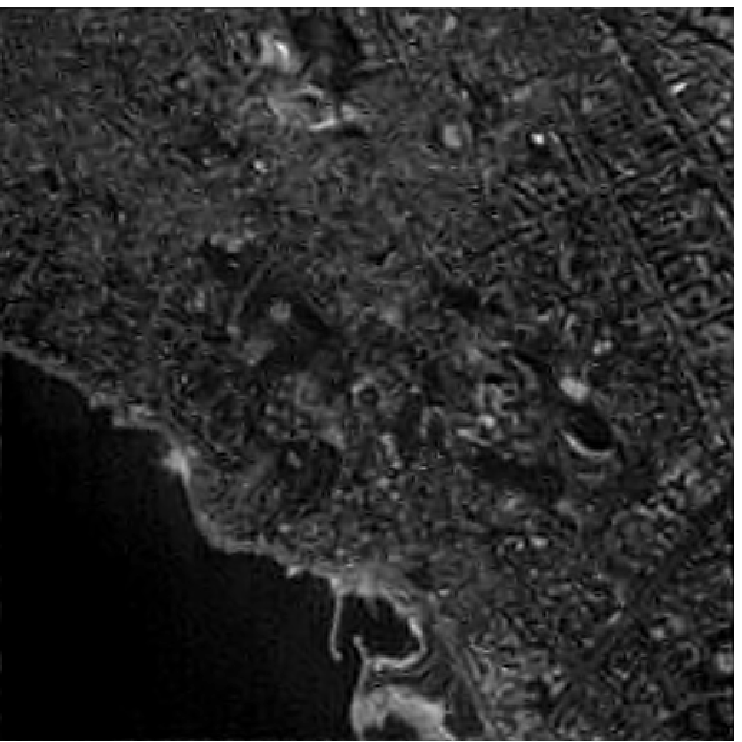}} \\
\multicolumn{2}{c}{(c)}
\end{tabular}
\end{center}
\caption{Results for a satellite image of the city of Marseille. (a) Original image, (b) degraded image, (c) restored using a DTT. \label{fig:marseille_sigdep}}
\end{figure}

Secondly, we aim at comparing the two proposed algorithms in terms of convergence for a given value of $\theta$. In Fig. \ref{fig:compalgo}, 
the MAP criterion value is plotted as a function of the computational time
for a $7\times 7$ blur, 
$\alpha_i\equiv 5$ and $\theta=0.25$.
For improved readibility, the criterion has been normalized by
subtracting the final value and dividing by the initial one.
It can be noticed that Algorithm \ref{algo:main} converges faster than Algorithm \ref{algo:main1}. This fact was confirmed by other simulation results 
performed in various contexts.

\begin{figure}[htbp]
\begin{center}
\includegraphics[height=6cm,width=9cm]{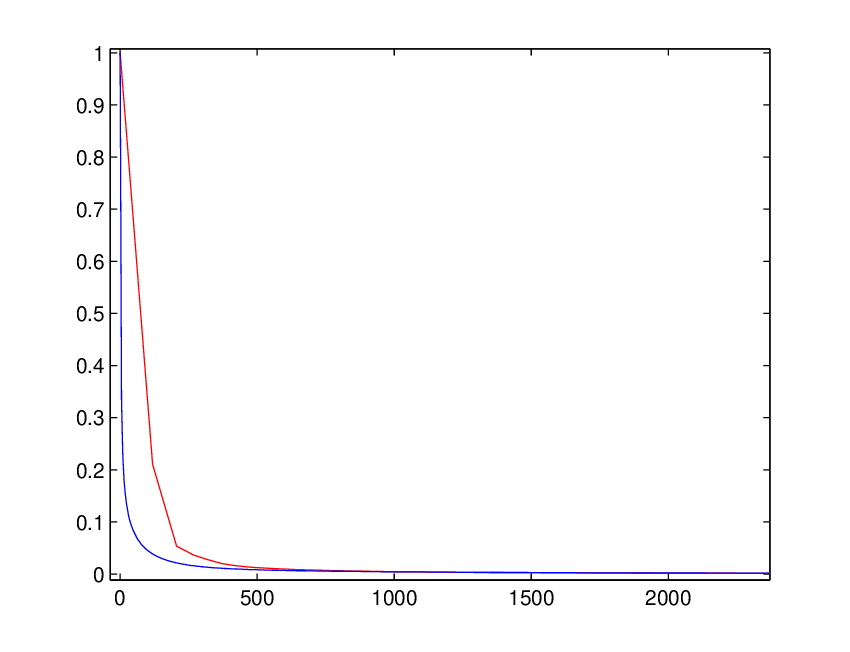}
\end{center}
\caption{Normalized MAP criterion (Algorithm \ref{algo:main1} in red and Algorithm \ref{algo:main} in blue) versus computational time (in seconds) 
(Intel Xeon 4 Core, 3.00 GHz).\label{fig:compalgo}}
\end{figure}

Finally, Fig.~\ref{fig:comptheta} illustrates the influence of the choice
of the parameter $\theta$ when Algorithm \ref{algo:main} is used for a $7 \times 7$ blur and $\alpha_i\equiv 5$.
\begin{figure}[htbp]
\begin{center}
\includegraphics[height=6cm,width=9cm]{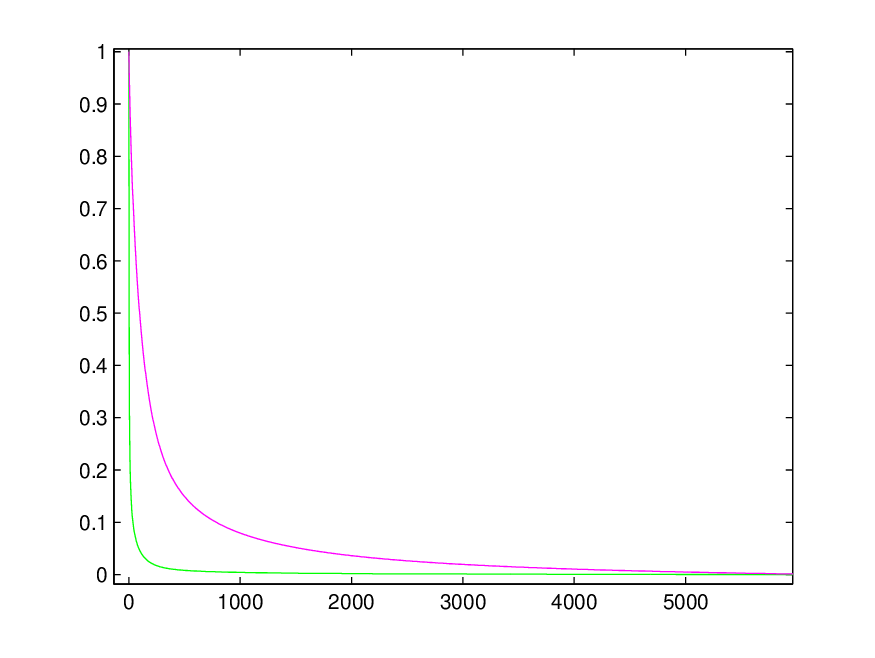}
\end{center}
\caption{Normalized MAP criterion (for $\theta=0.25$ in green and $\theta=10$ in magenta) versus computational time (in seconds) 
(Intel Xeon 4 Core, 3.00 GHz).\label{fig:comptheta}}
\end{figure}
As expected, the larger $\theta$ is, the slower the convergence of the algorithms is.
A trade-off  has therefore to be made: $\theta$ must be chosen large enough to reach a good restoration quality but it should not be too large in order to get a fast convergence.

\subsection{Second example}\label{sec:poisson}
\subsubsection{Model}
In this second scenario, we want to restore an image $\overline{y}\in\RP^N$ 
which is corrupted by a linear operator $T\,:\,\GG \to \GG$, assumed to
be nonnegative-valued and, which is
embedded in (possibly inhomogeneous) Poisson noise. Thus, the observed image $z = (z^{(i)})_{1 \le i \le N} \in \NN^N$ 
is Poisson distributed,
its conditional probability mass function being given by
\begin{equation}
(\forall i \in \{1,\ldots,N\})(\forall \upsilon \in \RP) \qquad
\mu_{Z^{(i)} \mid \overline{U}^{(i)}=\upsilon}(z^{(i)}) = \frac{(\alpha_i \upsilon )^{z^{(i)}}}{z^{(i)}!}\exp\big(-\alpha_i\upsilon \big)
\label{eq:probaPoisson}
\end{equation}
where $(\alpha_i)_{1 \le i \le N} \in \RPP^N$ are scaling parameters.

Consequently, using \eqref{eq:defpsii} and \eqref{eq:probaPoisson}, 
for every $i \in \{1,\ldots,N\}$, 
we have, when $z^{(i)} > 0$,
\begin{equation}
(\forall \upsilon \in \RR)\qquad \psi_i(\upsilon) =
\begin{cases}
\displaystyle \alpha_i \upsilon-z^{(i)}
 + z^{(i)} \ln \Big(\frac{z^{(i)}}{\alpha_i \upsilon}\Big)
& \mbox{if $\upsilon \in \RPP$}\\
+\infty & \mbox{otherwise}
\end{cases}
\label{eq:KL}
\end{equation}
and, when $z^{(i)} = 0$,
\begin{equation*}
(\forall \upsilon \in \RR)\qquad \psi_i(\upsilon) =
\begin{cases}
\displaystyle \alpha_i\upsilon
& \mbox{if $\upsilon \in \RP$}\\
+\infty & \mbox{otherwise.}
\end{cases}
\end{equation*}
As the functions $(\psi_i)_{1\le i \le N}$ are defined up to additive constants,
these constants have been chosen in \eqref{eq:KL} so as to obtain the
 expression of the classical Kullback-Leibler divergence term \cite{Byrne_CL_1993_tip_iter_irabcem}.\\
In this context, provided that $z\neq 0$, Assumption \ref{as:psi} holds with $\delta = 0$
and $\mathbb{I} =$\linebreak$ \menge{i\in \{1,\ldots,N\}}{z^{(i)} > 0}$
since, for all $i\in \mathbb{I}$, 
\begin{align*}
(\forall \upsilon \in \RPP)\qquad \psi_i'(\upsilon) &=\alpha_i - \frac{z^{(i)}}{\upsilon}\\
\psi_i''(\upsilon) & = \frac{z^{(i)}}{\upsilon^{2}} .
\end{align*}
We deduce from \eqref{eq:defuit} that, for every
$i \in \mathbb{I}$,
\begin{equation*}
(\forall \theta \in \RPP)\qquad
\upsilon_i(\theta) = \sqrt{\frac{z^{(i)}}{\theta}}.
\end{equation*}

\begin{remark} \label{re:anscombe}
At this point, it may be interesting to compare the proposed extension with the approach
developed in {\rm \cite{Dupe_FX_2008_ip_proximal_ifdpniusr}}. The use of the Anscombe transform {\rm \cite{Anscombe_F_1948_biometrika_trans_pbnbd}}, 
in {\rm \cite{Dupe_FX_2008_ip_proximal_ifdpniusr}} is actually tantamount to approximating the anti log-likelihood $\psi_i$ of the Poisson distribution  by
\begin{equation}
(\forall \upsilon \in \RR)\qquad
\widetilde{\psi_i}(\upsilon) = 
\begin{cases}
\frac{1}{2} \Big(2 \sqrt{\alpha_i \upsilon+\frac{3}{8}}- 2 \sqrt{z^{(i)}+\frac{3}{8}} \Big)^2
& \mbox{if $\upsilon \in \RP$}\\
\pinf & \mbox{otherwise.}
\end{cases}
\end{equation}
The proposed quadratic extension is illustrated in Fig.~\ref{fig:extensionPois} where  a graphical comparison with the Anscombe approximation  is performed.
\end{remark} 
\begin{figure}[htbp]
\centering
\includegraphics[height=6cm,width=9cm]{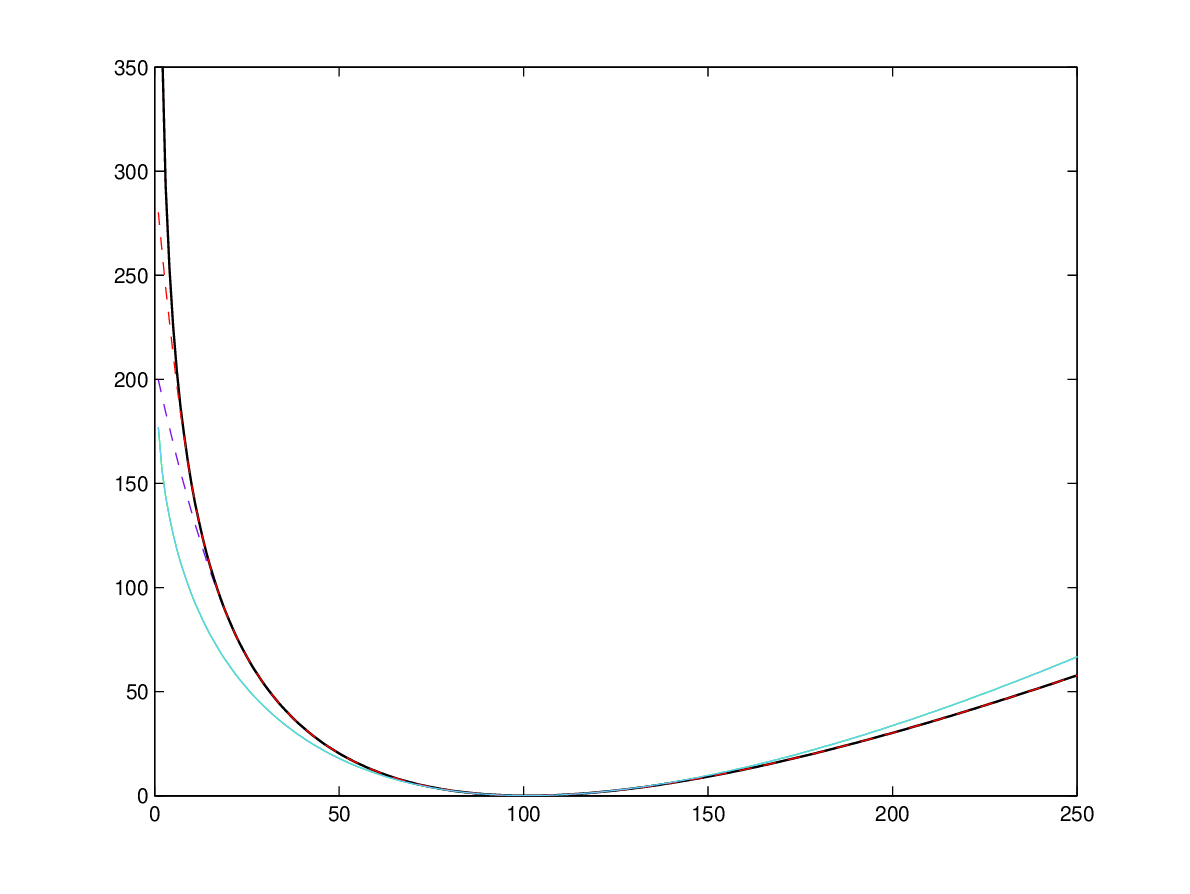} 
\caption{Graph of the function $\psi_i$ (black continuous line) when $\delta = 0$, $\alpha_i = 1$, $z^{(i)} = 100$. Its quadratic extension $\psi_{\theta,i}$  with $\theta= 0.2$ (purple dashed line) and $\theta= 1$ (red dashed line) for $\epsilon(\theta) = 10^{-16}$
 and its Anscombe approximation $\widetilde{\psi}_i$ (cyan continuous line).
\label{fig:extensionPois}}
\end{figure}

\subsubsection{Simulation results}
Here, $T$ is a $5\times 5$ uniform 
blur with $\|T\|=1$. A $256 \times 256$ ($N = 256^2$) medical image $\overline{y}$ shown in Fig. \ref{fig:resPoisson}(a) is degraded by $T$ and corrupted by a Poisson noise following the model described in the previous section for various intensity levels.
The degraded image $z$ is displayed in Fig.~\ref{fig:resPoisson}(b)  when $\alpha_i\equiv 0.01$.

An orthonormal wavelet basis representation has been adopted
using symlets of length $6$ ($\underline{\nu} = \overline{\nu}=1$, $K = N$).
The potential functions $\phi_k$ are taken of the 
same form as in the first example
and, the function $f$ 
is therefore coercive and strictly convex.

The constraint imposed on the solution is given by 
$C = (F^*)^{-1}C^*$ where $C^*$ is defined by
\eqref{eq:constval}.
Since $TC^* = C^*$,
Proposition \ref{p:gt}\ref{p:gtiii}
guarantees that a unique minimizer  $\widetilde{x}_\theta$
of $f+g_\theta+\iota_C$ exists,
which has been computed
with Algorithm \ref{sec:DR(FB)}. 
The algorithm has been initialized by setting $z_0 = P_C z$
and,
we have chosen $\gamma_{m,n}  \equiv 1.99/(\gammat\theta)$,
$\gammat = 60$ and $\lambda_{m,n} \equiv \lambdat_m \equiv 1$.
The number of forward-backward iterations is given by \eqref{eq:Nmchoice}
with $\eta = 10^{-4}$.
Note that the convergence rate could be accelerated by using adaptive step-size methods such as the Armijo-Goldstein search \cite{Tseng_P_2000_jco_modified_fbsmfmmm,Dupe_FX_2008_ip_proximal_ifdpniusr}. However, the computational time of the step-size determination should be taken into account.

To evaluate the performance of our algorithm we use the Signal to Noise Ratio defined in Section \ref{sec:simulSigDep}. Tab. \ref{tab:snr_sebal} shows the values of the $\mathrm{SNR}$
obtained for different values of $\alpha_i$ and $\theta$. As predicted by 
Proposition \ref{p:gt}\ref{p:gtv}, beyond some value of $\theta$,
which is dependent of $\alpha_i$,
 the optimal value is found. We also compare our results with those provided by two different approaches. The first one is the regularized Expectation Maximization (EM) approach
(also sometimes called SMART) \cite{Byrne_CL_1993_tip_iter_irabcem,Lange_K_1987_tmi_theoretical_atsosmlafeatt}  where the Poisson anti-likelihood penalized by
a term proportional to the Kullback-Leibler divergence between the desired solution
and a reference image is minimized. Its weighting factor has been adjusted manually so as to maximize the $\mathrm{SNR}$ and,
the reference image is a constant image whose pixel values has been set to the mean value of the degraded image. The other approach is the method based on the Anscombe transform proposed in \cite{Dupe_FX_2008_ip_proximal_ifdpniusr} and discussed in Remark \ref{re:anscombe}.
For fair comparisons, the method here employs the same orthonormal wavelet representation, the same 
functions $(\phi_k)_{1\le k \le K}$ as ours and the same constraint set
$C$.
 It can be observed that the approach we propose gives good results. However, for high intensity levels ($\alpha_i \ge 0.1$), the method based on the Anscombe transform performs equally well in terms of SNR.
The restored images are shown in Fig. \ref{fig:resPoisson}, when $\alpha_i \equiv 0.01$ and $\theta \equiv 0.001$ after 3000 iterations. In spite of an important degradation of the original image, it can be seen that our approach is able to recover the main features in the image. 
It can also be noticed that the image restored by the two methods exhibit different visual characteristics.

\begin{table}[htbp]
\centering
\begin{tabular}{| p{0.5cm}||c|| c||c|c|c|c|c|}
\hline
& Regularized &Anscombe & \multicolumn{5}{c|}{Quadratic extension}\\
\cline{4-8}
$\alpha_i$&EM & &$\theta=0.001$ & $\theta=0.005$ & $\theta=0.1$ & $\theta=1$ & $\theta=5$\\
\hline
\hline
$  0.01$ & 6.47 & 8.24 & \textbf{9.75} & 9.75 & 9.75 & 9.75 & 9.75\\
\hline
$0.05$ & 9.01 & 11.5 & 11.7 & \textbf{11.9} & 11.9 & 11.9 & 11.9 \\
\hline
$0.1$ & 10.1 & 12.4 & 12.0 & \textbf{12.5}  & 12.5 & 12.5 & 12.5\\
\hline
$1$ & 13.8 & \textbf{15.1}   & 0    & 10.1 & 13.7 & \textbf{15.1} & 15.1\\
\hline
\end{tabular}
\caption{$\mathrm{SNR}$ for the medical image.}
\label{tab:snr_sebal}
\end{table}

\begin{figure}[htbp]
\begin{center}
\begin{tabular}{cc}
\multicolumn{2}{c}{\includegraphics[height=5cm]{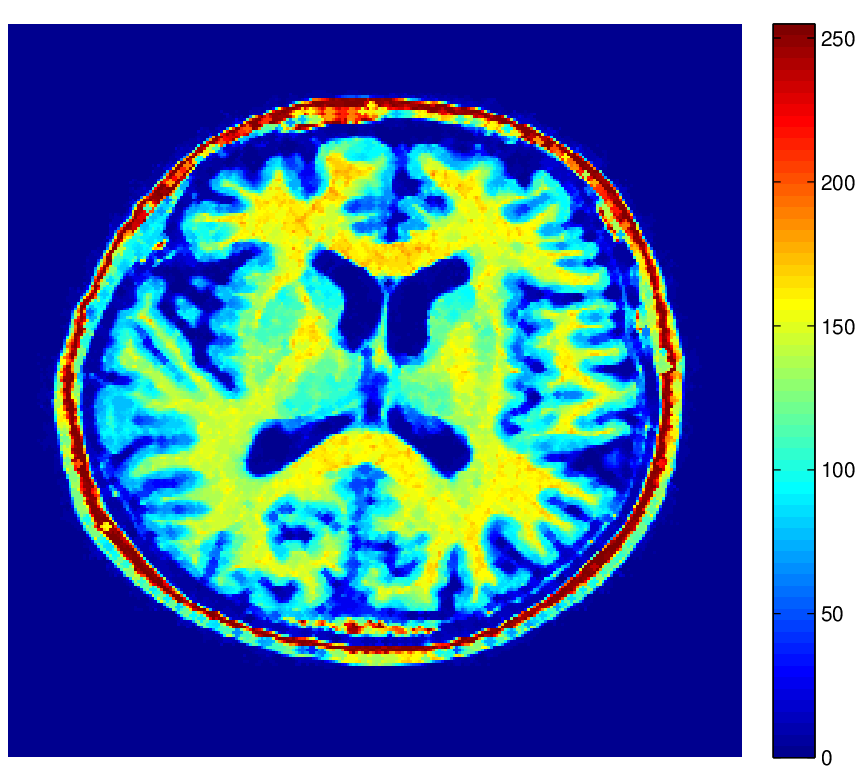}}\\
\multicolumn{2}{c}{(a)}\\
\includegraphics[width=5cm]{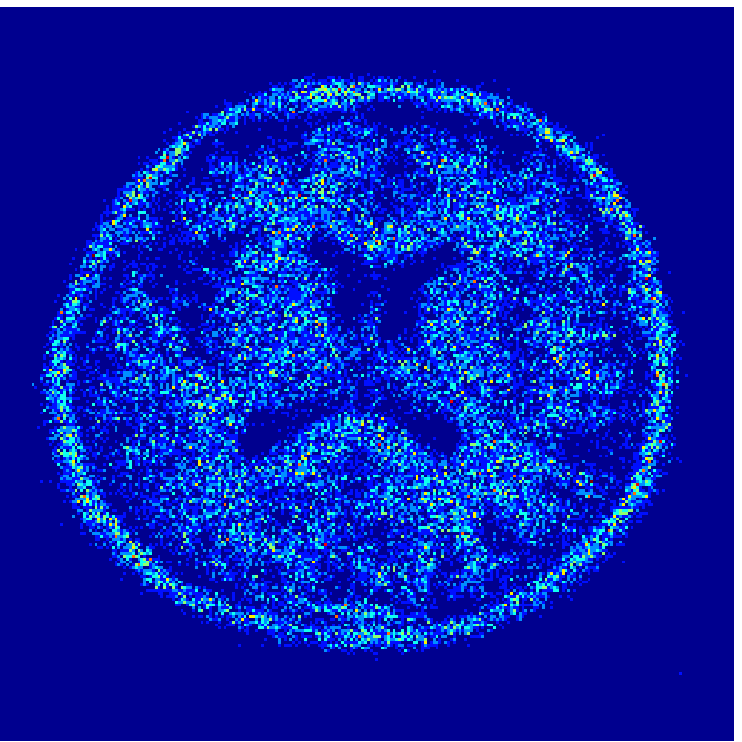} & \includegraphics[width=5cm]{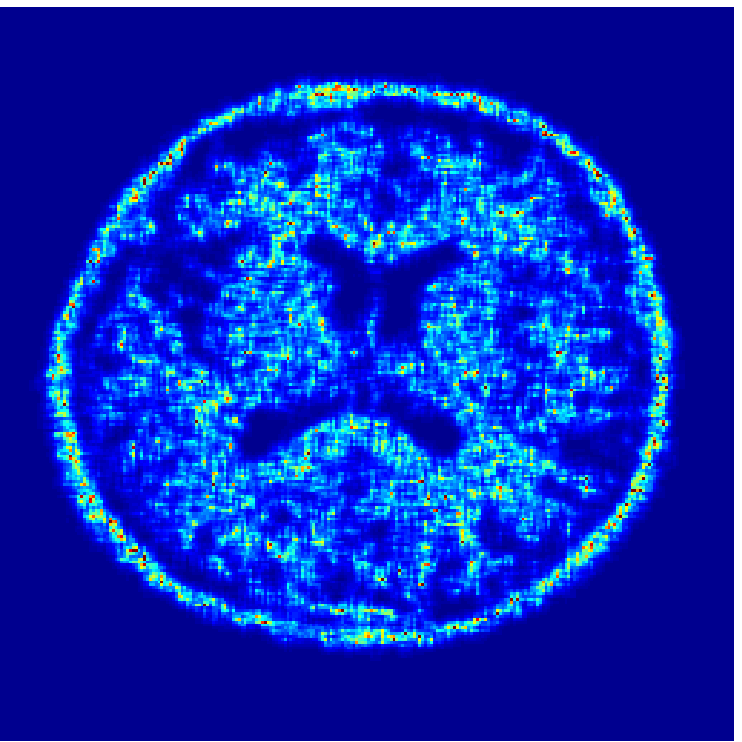} \\
(b) & (c) \\
\includegraphics[width=5cm]{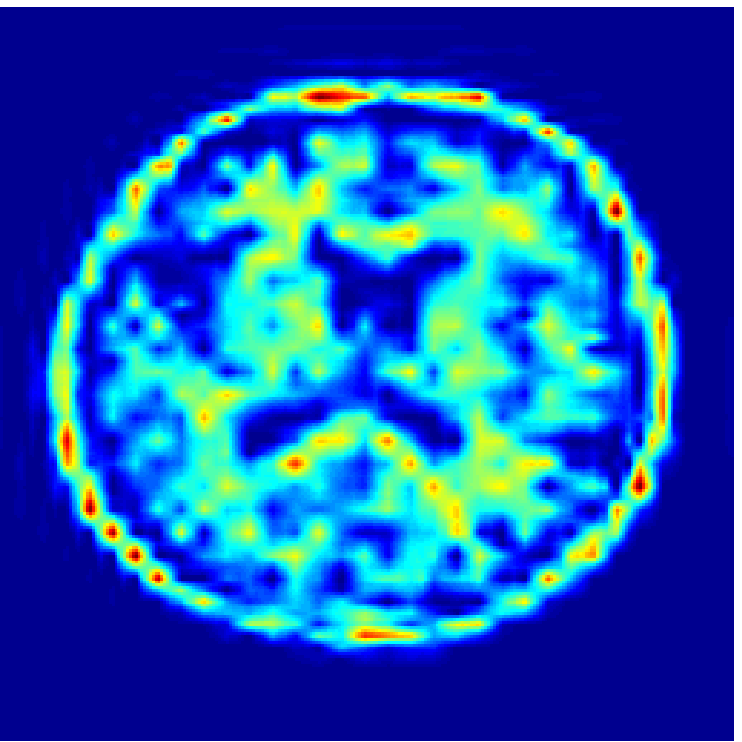} & \includegraphics[width=5cm]{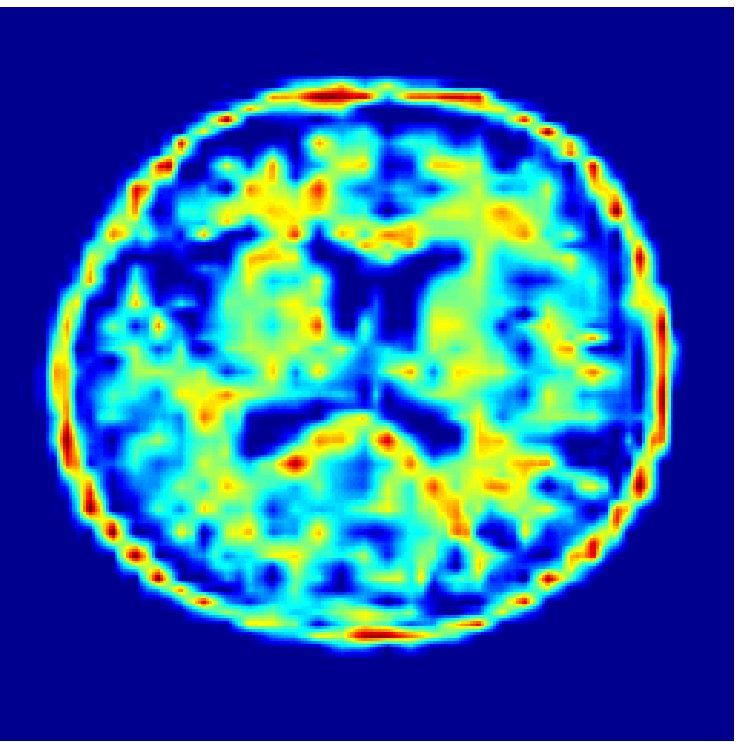}\\
(d) & (e)\\
\end{tabular}
\end{center}
\caption{Results on the medical image. (a) Original, (b) degraded, (c) restored with EM, (d) restored with Anscombe transform and (e) restored with quadratic extension.
\label{fig:resPoisson}}
\end{figure}

\section{Conclusion}
Two main problems have been addressed in this paper.

The first one concerns the minimization on a convex set $C$ of a sum of 
two functions, one of which $g$ being smooth while the other may be nonsmooth. Such a constrained minimization has been performed by combining forward-backward and Douglas-Rachford iterations. Various combinations of these algorithms can be envisaged and the study we made tends to show that Algorithm~\ref{algo:main} is a 
good choice.
It can be noticed that adding a constraint on the solution for a restoration problem was shown to be useful in another work \cite{Pustelnik_N_2008_peusipco_constrained_fbairp}, where it appeared that the visual quality of the restored image can be much improved w.r.t. the unconstrained case, when both restoration approaches are applicable.

The second point concerns the quadratic lower approximation technique we have proposed. 
This method offers a means of applying the proposed algorithms in cases when $g$ is differentiable on $C$ but the gradient of $g$ is not necessary Lipschitz continuous on $C$. By quadratically extending $g$, 
the proposed constrained minimization algorithms can be used.
This extension depends on a parameter $\theta$ which controls the precision (closeness to the solution of the original minimization problem) and 
the convergence speed of the algorithm.
As illustrated by the simulations, the choice 
of this parameter should result from a trade-off. The numerical results have also shown the efficiency of the proposed methods in deconvolution problems involving a signal-dependent Gaussian noise or a
Poisson noise.

Finally, it may be interesting to note that nested iterative algorithms similar to those developed in this paper can be used to solve
$\min_{x\in\HH} f + g + h$ where $\HH$ is a real separable Hilbert space,
$f$, $g$ 
and $h$ are functions in $\Gamma_0(\HH)$ and $g$ is $\beta$-Lipschitz differentiable.

\begin{appendix}

\section{Study of Example \ref{ex:ce1}} \label{ap:exce1}
Let $p=\prox_{f} x$ and $q = \prox_{f+\iota_C} x$ where
$x \in \HH$.
Let $g$ be the convex function defined by $(\forall y \in \HH)$ 
$g(y) = \frac{1}{2}\left\|y-x\right\|^2 + \frac{1}{2}y^{\top}\Lambda y$. Consequently, $p=\big(I+\Lambda\big)^{-1}x$ is the minimizer of $g$ on $\HH$, whereas $q$ is the minimizer of $g$ on $C$. Thus, we can write $(\forall y \in \HH)$ $g(y) = \tilde{g}(y)+h_x$ where $\tilde{g}(y) = \frac{1}{2}(y-p)^{\top}(I+\Lambda)(y-p)$ and $h_x$ is a function of $x$. Then, $q$ also minimizes $\tilde{g}$ on $C$.\\
In the example, we have chosen $x=2(\Lambda_{1,2},1+\Lambda_{2,2})^{\top}$, which yields $ p = (0,2)^{\top}$ and $P_C(p) = (0,1)^{\top}$.\\
Let $\tilde{q} = (\pi,1)^{\top}$. 
To show that $q = \tilde{q}$, we have check that $\tilde{q}$ minimizes $\tilde{g}$
on $C$. A necessary and sufficient condition for the latter property
to be satisfied \cite[p. 293, Theorem 1.1.1]{Hiriart_Urruty_1996_book_convex_amaIf} is that 
\begin{equation*}
(\forall y \in C)\qquad \big(\nabla\tilde{g}(\tilde{q})\big)^\top(y-\tilde{q}) \ge 0
\end{equation*}
where 
$\nabla\tilde{g}(\tilde{q}) = (I+\Lambda)(\tilde{q}-p)$ is
the gradient of $\tilde{g}$ at $\tilde{q}$.
This is equivalent to prove that
\begin{equation}
(\forall (y^{(1)},y^{(2)})^\top \in C)\qquad
(2\pi-\Lambda_{1,2})(y^{(1)}-\pi) + (\Lambda_{1,2} \pi - \Lambda_{2,2}-1)(y^{(2)}-1) \geq 0.
\label{eq:minequac}
\end{equation}
Three cases must be considered: 
\begin{itemize}
\item when $\Lambda_{1,2}<-2$, $(y^{(1)},y^{(2)})^\top \in C$ $\Rightarrow$
$y^{(1)} \ge -1 = \pi$ and $y^{(2)} \le 1$. In addition, we have
$2\pi-\Lambda_{1,2} = -2 -\Lambda_{1,2} > 0$ and 
$\Lambda_{2,2}-\Lambda_{1,2}^{2}\geq 0$ $\Rightarrow$ 
$\Lambda_{1,2} \pi - \Lambda_{2,2}-1 \le -\Lambda_{1,2}^{2}-\Lambda_{1,2}-1 < 0$.
So, \eqref{eq:minequac} holds.
\item When $\Lambda_{1,2}>2$, similar arguments hold.
\item When $\Lambda_{1,2}\in[-2,2]$, $2\pi-\Lambda_{1,2} = 0$
and $\Lambda_{1,2} \pi - \Lambda_{2,2}-1
= \frac{\Lambda_{1,2}^2}{2} - \Lambda_{2,2}-1
\le -\frac{\Lambda_{1,2}^2}{2}-1 \le 0$, which shows that
\eqref{eq:minequac} is satisfied.
\end{itemize}
This leads to the conclusion of Example \ref{ex:ce1}.

\section{Study of Example \ref{ex:ce2}}\label{ap:exce2}
Let $f$ be the function defined in Example~\ref{ex:ce2}.
Defining the rotation matrix $R=\frac{1}{\sqrt 2}\begin{pmatrix} 1 & -1 \\ 1 & 1 \end{pmatrix}$, this function can be expressed as
\begin{equation*}
(\forall x\in \RR^{2}) \qquad f(x) = \tilde{f}(Rx)
\end{equation*}
where $\tilde{f}(x) = \frac{1}{2} x^\top \Lambda x$ with
\begin{equation*}
\Lambda = \begin{pmatrix}
1 & \Lambda_{1,2}\\
\Lambda_{1,2} & 1
\end{pmatrix}.
\end{equation*}
In addition, 
\begin{equation*}
C = \{x\in \RR^{2}\;\mid \;Rx\in [-1,1]^2\} = R^\top [-1,1]^2.
\end{equation*}
It can be noticed that $[-1,1]^2$ is the separable convex set considered
in Example \ref{ex:ce1} whereas $\tilde{f}$ appears as a particular case in the class of quadratic
functions considered in this example (by setting $\Lambda_{2,2} = 1$).\\
Thus, the proximity operator of $f$ is
\begin{align*}
(\forall x \in \HH)\qquad
\prox_f x = &\arg\min_{y\in \HH} \frac{1}{2} \|x-y\|^2 + f(y)\nonumber\\
= & \arg\min_{y\in \HH} \frac{1}{2} \|Rx-Ry\|^2 + \tilde{f}(Ry)
=  R^\top \prox_{\tilde{f}}(Rx).
\end{align*}
and $P_C(\prox_f x) = R^\top P_{[-1,1]^2}(R\prox_f x)
= R^\top P_{[-1,1]^2}\big(\prox_{\tilde{f}}(R x)\big)$.
Similarly, we have
\begin{equation*}
(\forall x \in \HH)\qquad
\prox_{f+\iota_C} x = R^\top \prox_{\tilde{f}+\iota_{[-1,1]^2}}(Rx).
\end{equation*}
So, if $x = 2R^\top (\Lambda_{1,2},2)^\top = 
\sqrt{2}(2+\Lambda_{1,2},2-\Lambda_{1,2})^\top$, we deduce from
Example \ref{ex:ce1} that
$P_C(\prox_f x) = \frac{1}{\sqrt{2}}(1,-1)^\top$
and $\prox_{f+\iota_C} x = \frac{1}{\sqrt{2}} (1+\pi+1,1-\pi)^\top$,
where the expression of $\pi$ is given by \eqref{eq:defpi}.
It can be concluded that $P_C(\prox_f x)\neq \prox_{f+\iota_C} x$.

\end{appendix}

\end{document}